\numberwithin{equation}{section}
\newtheorem{theorem}{\indent Theorem}[section]
\newtheorem{lemma}{\indent Lemma} [section]
\newtheorem{proposition}{\indent Proposition}[section]
\newtheorem{definition}{\indent Definition} [section]
\newtheorem{remark}{\indent Remark} [section]
\newtheorem{thm}{\indent Theorem}
  \theoremstyle{nonumberplain}
\theoremstyle{nonumberplain} \theoremheaderfont{\bf\rmfamily}
\newtheorem{proof}{\indent Proof}
\DeclareMathOperator*{\supp}{supp}
\DeclareMathOperator*{\essinf}{ess\, inf}
\DeclareMathOperator*{\esssup}{ess\, sup}
\def\rr{\mathbb{R}}
\def\rn{{\rr}^n}
\begin{document}

\title{\sf Commutators of multilinear Calder\'on-Zygmund operators
with kernels of Dini's type and applications\footnote{This research
was supported by the National Natural Science Foundation of China
(Grant Nos. 11571160, 11271162 and 11471176).}
  }
\author{Pu Zhang\footnote{Corresponding author.} \  and Jie Sun\\
 {\small\it Department of Mathematics, Mudanjiang Normal
University, Mudanjiang 157011, P. R. China}\\
 {\small\it  E-mail: puzhang@sohu.com {\sc(P. Zhang)},
                sj800816@163.com \sc{(J. Sun)}}
    }
\date{ }
\maketitle

\begin{abstract}

Let $T$ be a multilinear Calder\'on-Zygmund operator of type
$\omega$ with $\omega(t)$ being nondecreasing and satisfying a
kind of Dini's type condition. Let $T_{\Pi\vec{b}}$ be the
iterated commutators of $T$ with $BMO$ functions. The weighted
strong and weak $L(\log{L})$-type endpoint estimates for
$T_{\Pi\vec{b}}$ with multiple weights are established. Some
boundedness properties on weighted variable exponent
Lebesgue spaces are also obtained.

As applications, multiple weighted estimates for iterated
commutators of paraproducts and bilinear pseudo-differential
operators with mild regularity are given.

{\bf Keywords:}  Multilinear Calder\'on-Zygmund operator; commutator;
multiple weight; paraproduct; bilinear pseudo-differential operator;
variable exponent Lebesgue space.

{\bf Mathematics Subject Classification (2010):} 42B20, 42B25, 47G30, 35S05,
46E30

\end{abstract}


\section{Introduction and Main Results}  \label{introduction}

The study of multilinear Calder\'on-Zygmund theory goes back
to the pioneering works of Coifman and Meyer in 1970s, see e.g.
\cite{cm1,cm2}. This topic was then further investigated by many
authors in the last few decades, see for example \cite{dgy, glmy,
gt1, gt2, h-x, ks, loptt, ls, lz, mn, pptt, pt2014}.

Let $T$ be a multilinear Calder\'on-Zygmund operator with
associated kernel satisfying the standard estimates as in
\cite{gt1} and \cite{loptt}.
For $\vec{b}=(b_1,\cdots,b_m) \in {BMO^m}$, that is $b_j\in
{BMO(\rn)}$ for $j=1, \cdots, m$, the $m$-linear commutator
of $T$ with $\vec{b}$ is defined by
\begin{equation}  \label{equ.1.1} 
T_{\Sigma\vec{b}}(f_1,\cdots,f_m) =\sum_{j=1}^m T^j_{b_j}(\vec{f})
\end{equation}
where
\begin{equation}      \label{equ.j-th}
T^j_{b_j}(\vec{f})=b_jT(f_1,\cdots,f_j,\cdots,f_m)
-T(f_1,\cdots,b_jf_j,\cdots,f_m).
\end{equation}
The iterated commutators of $T$ with $\vec{b}$ is defined by
\begin{equation}  \label{equ.1.2} 
T_{\Pi\vec{b}}(\vec{f})(x)= [b_1,[b_2,...[b_{m-1}, [b_m,T]_m]_{m-1}
... ]_2,]_1(\vec{f})(x).
\end{equation}

For an $m$-linear Calder\'on-Zygmund operator with associated kernel
$K(x,\vec{y})$, the iterated commutator $T_{\Pi\vec{b}}$ can also be
given formally by
$$T_{\Pi\vec{b}}(\vec{f})(x) =\int_{(\rn)^m} \bigg(\prod_{j=1}^m
\big(b_j(x)-b_j(y_j)\big)\bigg) K(x,\vec{y}) f_1(y_1)\cdots
{f_m(y_m)} d\vec{y}.
$$
Here and in what follows, $\vec{y}=(y_1,\cdots,y_m)$,
$(x,\vec{y})=(x,y_1,\cdots,y_m)$ and $d\vec{y}=dy_1\cdots{dy_m}$.

In 2009, Lerner et al. \cite{loptt} developed a multiple weight
theory that adapts to multilinear Calder\'on-Zygmund operators.
They established multiple weighted norm inequalities for
multilinear Calder\'on-Zygmund operators and their commutators
$T_{\Sigma\vec{b}}$.  Recently, P\'erez et al. \cite{pptt} studied
the iterated commutators $T_{\Pi\vec{b}}$ in products of Lebesgue
spaces. Both strong type and weak type estimates with multiple
weights are obtained.

The purpose of this paper is to consider weighted inequalities
with multiple weights for iterated commutators of multilinear
Calder\'on-Zygmund operators of type $\omega$. Some boundedness
properties on weighted variable exponent Lebesgue spaces are also
obtained. In addition, we will give some applications to the
iterated commutators of paraproducts and bilinear
pseudo-differential operators with mild regularity.  This paper
can also be seen as a continuation of our previous one \cite{lz}.

We now recall the definition of multilinear Calder\'on-Zygmund
operators of type $\omega$.

\begin{definition} \label{def.1.1}
Let $\omega(t): [0,\infty)\to [0,\infty)$ be a nondecreasing function.
A locally integrable function $K(x,y_1,\cdots,y_m)$, defined away
from the diagonal $x=y_1=\cdots=y_m$ in $(\rn)^{m+1}$, is called an
$m$-linear Calder\'on-Zygmund kernel of type $\omega$ if, for some
constants $0<\tau<1$, there exists a constant $A>0$ such that
 \begin{equation}  \label{equ.k1} 
  |K(x,y_1,\cdots,y_m)|\le \frac{A}{(|x-y_1|+\cdots +|x-y_m|)^{mn}}
  \end{equation}
for all $(x,y_1,\cdots,y_m) \in (\rn)^{m+1}$ with $x\neq y_j$ for
some $j \in \{1,2, \cdots, m\}$, and
 \begin{equation}  \label{equ.k2} 
 \begin{split}
  &|K(x,y_1,\cdots,y_m)-K(x',y_1,\cdots,y_m)|\\
  &\ \le \frac{A}{(|x-y_1|+\cdots +|x-y_m|)^{mn}}
  \omega\left(\frac{|x-x'|}{|x-y_1|+\cdots +|x-y_m|}\right)
 \end{split}
 \end{equation}
whenever $|x-x'| \le \tau \max_{1\le j\le m}|x-y_j|$, and
 \begin{equation}  \label{equ.k3} 
 \begin{split}
  &|K(x,y_1,\cdots,y_j,\cdots,y_m)-K(x,y_1,\cdots,y_j',\cdots,y_m)|\\
  &\  \le \frac{A}{(|x-y_1|+\cdots +|x-y_m|)^{mn}}
  \omega\left(\frac{|y_j-y_j'|}{|x-y_1|+\cdots +|x-y_m|}\right)
 \end{split}
 \end{equation}
whenever $|y_j-y_j'| \le \tau \max_{1\le i\le m}|x-y_i|$.

We say $T: {\mathscr{S}}(\rn) \times \cdots \times
{\mathscr{S}}(\rn) \to {\mathscr{S'}}(\rn)$ is an $m$-linear
operator with an $m$-linear Calder\'on-Zygmund kernel of type
$\omega$, $K(x,y_1,\cdots,y_m)$, if
$$T(f_1,\cdots,f_m)(x) =\int_{(\rn)^m} K(x,y_1,\cdots,y_m) f_1(y_1)
\cdots {f_m(y_m)} dy_1 \cdots {dy_m}
$$
whenever $x \notin \bigcap_{j=1}^m \supp {f_j}$ and each $f_j\in
{C_c^{\infty}(\rn)}$, $j=1,\cdots, m$.

If $T$ can be extended to a bounded multilinear operator from
$L^{q_1}(\rn)\times \cdots \times L^{q_m}(\rn)$ to $L^{q}(\rn)$
for some $1\le{q_1},\cdots, q_m<\infty$ and $1/q=1/q_1+\cdots +1/q_m$,
then $T$ is called an $m$-linear Calder\'on-Zygmund operator of
type $\omega$, abbreviated to $m$-linear $\omega$-CZO.
\end{definition}

Obviously, when $\omega(t)=t^{\varepsilon}$ for some
$\varepsilon>0$, the $m$-linear $\omega$-CZO is exactly the
multilinear Calder\'on-Zygmund operator studied by Grafakos
and Torres \cite{gt1} and Lerner et al. \cite{loptt}. The
linear Calder\'on-Zygmund operator of type $\omega$ was
studied by Yabuta \cite{y}.

\begin{definition} \label{def.1.2}
Let $\omega(t): [0,\infty) \to [0,\infty)$ be a nondecreasing function.
For $a>0$, we say that $\omega$ satisfies the $Dini(a)$ condition and
wirte $\omega \in Dini(a)$, if
$$|\omega|_{Dini(a)}:=\int_0^1 \frac{\omega^{a}(t)}{t}dt<\infty.
$$
\end{definition}

Obviously, $Dini(a_1)\subset{Dini(a_2)}$ provided $0<a_1<a_2$.

We would like to note that Maldonado and Naibo\cite{mn} studied the
bilinear $\omega$-CZOs when $\omega(t)$ is a nondecreasing, concave
function and belongs to $Dini(1/2)$. Recently, Lu and Zhang \cite{lz}
improve and extend their results essentially by removing the hypothesis
that $\omega$ is concave and reducing the condition $\omega\in{Dini(1/2)}$
to a weaker condition $\omega\in{Dini(1)}$.

\begin{thm}[\cite{lz}]   \label{thm.A} 
Let $\omega\in Dini(1)$ and $T$ be an $m$-linear operator with an
$m$-linear Calder\'on-Zygmund kernel of type $\omega$. Suppose that
for some $1\le q_1, \cdots, q_m \le \infty$ and some $0<q<\infty$
with $1/{q}=1/{q_1}+\cdots + 1/{q_m}$, $T$ maps $L^{q_1}(\rn)\times
\cdots \times L^{q_m}(\rn)$ into $L^{q,\infty}(\rn)$. Then $T$ can
be extended to a bounded operator from $L^1(\rn) \times \cdots
\times L^1(\rn)$ to $L^{1/m,\infty}(\rn)$.
\end{thm}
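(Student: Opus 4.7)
The plan is to run a multilinear Calder\'on-Zygmund decomposition at height $\alpha^{1/m}$ on each input $f_j$, and then reduce every piece of the resulting expansion either to the assumed weak-type bound or to a kernel-smoothness estimate in which the hypothesis $\omega\in Dini(1)$ is consumed in exactly one integration against $dr/r$.

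First, fix $\alpha>0$ and, for each $j=1,\dots,m$, apply the classical Calder\'on-Zygmund decomposition to $f_j$ at level $\alpha^{1/m}$ to write $f_j=g_j+b_j$, where $b_j=\sum_{k}b_{j,k}$, each $b_{j,k}$ is supported on a cube $Q_{j,k}$, has mean zero, and satisfies the usual $L^1$ and size estimates; moreover $\|g_j\|_{L^{q_j}}\lesssim \alpha^{1/m-1/(mq_j)}\|f_j\|_{L^1}^{1/q_j}$. Enlarging each $Q_{j,k}$ to $Q_{j,k}^\ast$ (a concentric dilate by a factor depending on $\tau$), set $E^\ast=\bigcup_{j,k}Q_{j,k}^\ast$; the standard disjointness of the CZ cubes gives $|E^\ast|\lesssim \alpha^{-1/m}\prod_j\|f_j\|_{L^1}^{1/m}$, which is the desired bound. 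It therefore suffices to estimate $|\{x\notin E^\ast:|T(f_1,\dots,f_m)(x)|>\alpha\}|$.

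Expanding $T(g_1+b_1,\dots,g_m+b_m)$ produces $2^m$ terms, one for each choice $h_j\in\{g_j,b_j\}$. The all-good term $T(g_1,\dots,g_m)$ is estimated by Chebyshev combined with the assumed $L^{q_1}\times\cdots\times L^{q_m}\to L^{q,\infty}$ bound and the size estimate for each $g_j$, which yields the desired $\alpha^{-1/m}\prod_j\|f_j\|_{L^1}^{1/m}$ after a direct computation. For every remaining term there is a non-empty set $J\subset\{1,\dots,m\}$ of ``bad'' indices. Writing the term as an iterated sum over the cubes $\{Q_{j,k_j}\}_{j\in J}$, we single out the smallest such cube $Q_{j_0,k_{j_0}}$ and use the mean-zero property of $b_{j_0,k_{j_0}}$ to replace $K(x,\vec y)$ by $K(x,\vec y)-K(x,\vec y\,')$, where $\vec y\,'$ differs from $\vec y$ only in the $j_0$-th coordinate, which is replaced by the center of $Q_{j_0,k_{j_0}}$. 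The smoothness hypothesis \eqref{equ.k3} then furnishes a factor $\omega\bigl(|y_{j_0}-y_{j_0}'|/(|x-y_1|+\cdots+|x-y_m|)\bigr)$.

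At this point the rest is a bookkeeping argument. For $x\notin Q_{j_0,k_{j_0}}^\ast$ and $y_j\in Q_{j,k_j}$ we dominate the resulting integrand by replacing the sum $\sum_j|x-y_j|$ by the single term $|x-c_{j_0,k_{j_0}}|$, split that distance dyadically, and integrate the outer variable $x$ over each annulus. The crucial step is the dyadic summation
\[
\sum_{\ell\ge 0}\omega\bigl(2^{-\ell}\bigr)\;\lesssim\;\int_0^1\frac{\omega(t)}{t}\,dt=|\omega|_{Dini(1)},
\]
so the Dini(1) hypothesis is exactly what is needed; this is the improvement over the $Dini(1/2)$ setting in which a Cauchy--Schwarz step forced the exponent $1/2$. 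The remaining factors for the indices $j\in J\setminus\{j_0\}$ are controlled by $\|b_{j,k_j}\|_{L^1}\lesssim\alpha^{1/m}|Q_{j,k_j}|$, while the factors for $j\notin J$ are handled by bounding $g_j$ by $\alpha^{1/m}$ together with $\|g_j\|_{L^1}\le\|f_j\|_{L^1}$. Summing the telescoping geometric series in $\ell$ and then over the CZ cubes produces the bound $\alpha^{-1/m}\prod_j\|f_j\|_{L^1}^{1/m}$, and Chebyshev at level $\alpha$ completes the proof.

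The main obstacle I anticipate is carrying out the mixed good/bad estimates in such a way that the Dini condition is invoked only once per term; otherwise one is forced back to $Dini(1/2)$. Concretely, the step where the smallest bad cube $Q_{j_0,k_{j_0}}$ is selected and the smoothness applied in exactly one variable must be organized so that, after integrating in $x$, the remaining $|J|-1$ bad variables contribute only $L^1$ masses (no further $\omega$-factor). This identification of a single ``distinguished'' bad index, together with the dyadic/annular summation above, is the technical heart of the proof.
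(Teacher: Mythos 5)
The proposal captures the same proof strategy used in \cite{lz}: a Calder\'on--Zygmund decomposition of each $f_j$ at height $\alpha^{1/m}$, a $2^m$-term expansion in good/bad parts, estimation of the all-good term by Chebyshev together with the assumed $L^{q_1}\times\cdots\times L^{q_m}\to L^{q,\infty}$ bound, and for the bad terms a single application of the $y$-variable smoothness estimate \eqref{equ.k3} via the mean-zero property of a distinguished bad piece, so that the Dini integral $\int_0^1\omega(t)\,dt/t$ is invoked exactly once per term. You also correctly pinpoint that this single invocation of smoothness is what permits $Dini(1)$ rather than $Dini(1/2)$, and you note the role of enlarging the CZ cubes to make the $\tau$-restriction in \eqref{equ.k3} applicable. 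Those are the right ingredients, and the high-level plan matches the cited proof.

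That said, the bookkeeping sentence near the end of your sketch is optimistic in a way that can mislead you when you try to write it out. You state that after applying smoothness for the distinguished index $j_0$, the remaining bad indices $j\in J\setminus\{j_0\}$ ``contribute only $L^1$ masses'' and that summing ``the telescoping geometric series in $\ell$ and then over the CZ cubes'' gives the measure bound via Chebyshev. Be aware that if you expand \emph{all} the bad parts into sums over cubes and then try to bound
$\int_{\rn\setminus E^\ast}|T(\vec h^{\vec k})|\,dx$
term by term and sum over all $\vec k$, the resulting series can diverge when $|J|\ge 2$: the per-term bound you get carries a factor $\ell_{j_0}^{-(|J|-1)n}$ from the inner $x$-integration which, after using $\|b_{j,k_j}\|_1\lesssim\alpha^{1/m}|Q_{j,k_j}|$, leaves an unbounded count over the remaining cubes. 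In other words, a direct ``$L^1$ over $\rn\setminus E^\ast$, then Chebyshev'' does \emph{not} close for the terms with two or more bad indices. The correct organization is either to keep the bad parts $b_j$, $j\in J\setminus\{j_0\}$, unexpanded and absorb them into Hardy--Littlewood maximal functions $M(b_j)(x)$ after the kernel integration, then estimate the \emph{measure} of the level set directly via a union-bound/Chebyshev argument using $\|M(b_j)\|_{L^{1,\infty}}\lesssim\|b_j\|_1$; or to retain the full kernel decay in the bad variables and exploit the geometric separation $|x-c_{j,k_j}|\gtrsim\ell(Q_{j,k_j})$ more carefully before summing. Either way, the multi-bad case genuinely needs more than ``sum the $L^1$ masses.'' This is precisely the place your sketch calls ``the technical heart,'' and you should not expect the naive $L^1$-then-Chebyshev route to go through there without modification. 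A minor additional point: the exceptional set estimate gives $|E^\ast|\lesssim\alpha^{-1/m}\sum_j\|f_j\|_{L^1}$, which reduces to the product form you wrote only after the usual normalization $\|f_j\|_{L^1}=1$ (which you should state explicitly at the outset).
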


Denote by $A_{\vec{P}}$ the multiple weight classes introduced by
Lerner et al. \cite{loptt} (see Definition \ref{def.AP} below). The
following multiple weighted strong and weak type estimates for
$m$-linear $\omega$-CZOs were established in \cite{lz}.

\begin{thm}[\cite{lz}]   \label{thm.B} 
Let $T$ be an $m$-linear $\omega$-CZO and $\omega\in {Dini(1)}$.
Let $\vec{P}=(p_1, \cdots, p_m)$ with $1/p=1/p_1 + \cdots +1/p_m$
and ${\vec{w}}\in {A_{\vec{P}}}$.

 {\rm (1)} If $1<p_j<\infty$ for all $j=1, \cdots,
m$, then
$$\big\|T(\vec{f})\big\|_{L^p(\nu_{\vec{w}})}\le {C}
\prod_{j=1}^m\|f_j\|_{L^{p_j}(w_j)}.
$$

{\rm (2)} If $1\le {p_j}<\infty$ for all $j=1, \cdots, m$, and at
least one of the $p_j=1$, then
$$\big\|T(\vec{f})\big\|_{L^{p,\infty}(\nu_{\vec{w}})}\le {C}
\prod_{j=1}^m\|f_j\|_{L^{p_j}(w_j)}.
$$
\end{thm}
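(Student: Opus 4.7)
The plan is to follow the sharp maximal function approach developed by Lerner et al.\ \cite{loptt} for the classical multilinear Calder\'on-Zygmund case, adapted to accommodate the weaker Dini$(1)$ regularity. The main ingredient is the pointwise bound
$$M^{\#}_{\delta}(T\vec{f})(x) \le C\, \mathcal{M}(\vec{f})(x), \qquad 0<\delta<1/m,$$
where $M^{\#}_{\delta}g = M^{\#}(|g|^\delta)^{1/\delta}$ is the Fefferman-Stein sharp maximal function and $\mathcal{M}(\vec{f})(x) = \sup_{Q\ni x}\prod_{j=1}^m |Q|^{-1}\int_Q |f_j|$ is the multi-(sub)linear maximal operator. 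To establish this, I would fix a cube $Q\ni x$, write each $f_j = f_j^0 + f_j^\infty$ with $f_j^0 = f_j\chi_{Q^*}$ for a suitable dilate $Q^*$, and expand $T(\vec{f})$ into $2^m$ terms. The ``fully local'' piece $T(f_1^0,\ldots,f_m^0)$ is handled by Kolmogorov's inequality together with Theorem~A, which supplies exactly the endpoint boundedness $L^1\times\cdots\times L^1\to L^{1/m,\infty}$ needed here. The remaining $2^m-1$ terms, for which at least one $f_j^\infty$ appears, are estimated by subtracting the value of the corresponding truncated integral at the center of $Q$ and invoking the kernel conditions \eqref{equ.k2} and \eqref{equ.k3}; after decomposing the complement of $Q^*$ into dyadic annuli, the resulting series of the form $\sum_k \omega(2^{-k})$ is summable precisely because of $\omega\in Dini(1)$.

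Granting this sharp maximal estimate, part~(1) follows from the standard chain of weighted inequalities. The condition $\vec{w}\in A_{\vec{P}}$ yields $\nu_{\vec{w}}\in A_{mp}\subset A_\infty$, and Theorem~A ensures $T\vec{f}\in L^{1/m,\infty}(\rn)$, so the Fefferman-Stein inequality $\|g\|_{L^p(\nu_{\vec{w}})}\le C\,\|M^{\#}_{\delta}g\|_{L^p(\nu_{\vec{w}})}$ is applicable to $g=T\vec{f}$. Combined with the pointwise bound, this reduces the desired estimate to
$$\big\|\mathcal{M}(\vec{f})\big\|_{L^p(\nu_{\vec{w}})} \le C\prod_{j=1}^m \|f_j\|_{L^{p_j}(w_j)},$$
which is the characterization of $A_{\vec{P}}$ proved in \cite{loptt}.

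For the weak-type endpoint~(2) the Fefferman-Stein route is unavailable, and I would imitate instead the Calder\'on-Zygmund decomposition argument of \cite{loptt}. After relabelling, assume $p_1=\cdots=p_l=1<p_{l+1},\ldots,p_m$. For each $j$ with $p_j=1$, decompose $f_j=g_j+b_j$ at a height depending on $\alpha$, producing disjoint cubes $\{Q_{j,i}\}_i$ with the standard cancellation and size controls adapted to the weight $w_j$. Expanding $T(\vec{f})$ as a sum over all ways of replacing each such $f_j$ by either $g_j$ or $b_j$, the ``all-good'' term is absorbed by part~(1) applied at exponents strictly greater than $1$. Each remaining term carries at least one bad piece $b_j$: combining the atomic cancellation with the kernel regularity \eqref{equ.k2}--\eqref{equ.k3}, and dyadically decomposing the exterior of each exceptional cube, one reduces matters to weighted bounds for $\mathcal{M}$ plus a summable series in $\omega(2^{-k})$. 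The main obstacle, and the point where the proof genuinely departs from the H\"older case, is precisely this last step: the telescoping annular sums have to be controlled using only the integrability $|\omega|_{Dini(1)}<\infty$ in place of a geometric decay rate, which forces more delicate bookkeeping of the kernel modulus of continuity than in \cite{loptt}.
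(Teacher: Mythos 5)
Your treatment of part (1) is exactly the paper's route (via the cited work \cite{lz}): the pointwise bound $M^{\#}_{\delta}(T\vec{f})\le C\,\mathcal{M}(\vec{f})$ under $\omega\in Dini(1)$, Fefferman--Stein with $\nu_{\vec{w}}\in A_{\infty}$ (coming from Lemma~\ref{lem.ww}), and the strong-type characterization of $A_{\vec{P}}$ by the boundedness of $\mathcal{M}$ (Lemma~\ref{lem.wm}(1)). Indeed the present paper quotes precisely this consequence as Lemma~\ref{lem.lz.th6.2}.

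For part (2), however, your premise that ``the Fefferman--Stein route is unavailable'' is not correct, and this is where you diverge from the paper's argument. There is a weak-type version of the Fefferman--Stein inequality, recorded here as Lemma~\ref{lem.fs}(2): for $w\in A_{\infty}$ and a doubling $\varphi$,
$$\sup_{\lambda>0}\varphi(\lambda)\,w\big(\{M_{\delta}(f)>\lambda\}\big)\le C\sup_{\lambda>0}\varphi(\lambda)\,w\big(\{M_{\delta}^{\#}(f)>\lambda\}\big).$$
Taking $\varphi(\lambda)=\lambda^{p}$, using the same pointwise sharp bound $M^{\#}_{\delta}(T\vec{f})\le C\,\mathcal{M}(\vec{f})$, and then invoking the weak-type characterization of $A_{\vec{P}}$ (Lemma~\ref{lem.wm}(2)) gives part~(2) immediately, with no Calder\'on--Zygmund decomposition at all. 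This is the approach in \cite{lz}, and it is mirrored in the present paper's proofs of the commutator analogues (Proposition~\ref{prop.2} and Theorem~\ref{thm.1.2}).

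Your alternative CZ-decomposition plan also faces a genuine obstacle that you do not address: when $p_j=1$, the hypothesis $\vec{w}\in A_{\vec{P}}$ only yields $w_j^{1/m}\in A_1$ (Lemma~\ref{lem.ww}), \emph{not} $w_j\in A_1$. A weighted Calder\'on--Zygmund decomposition of $f_j$ ``adapted to $w_j$'' — needed to control the measure of the exceptional set and the bad pieces — would require an $A_1$ or at least doubling condition on $w_j$ itself, which is not available. This is in fact one of the reasons the sharp-maximal-function approach was introduced in \cite{loptt} and adopted in \cite{lz}: it sidesteps the need for componentwise weight regularity and handles both endpoints through the single estimate on $M^{\#}_{\delta}(T\vec{f})$.
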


Our first result is the following multiple weighted strong-type
estimates for the iterated commutator of
$m$-linear $\omega$-CZO with $BMO$ functions.

\begin{theorem}  \label{thm.1.1}
Let $T$ be an $m$-linear $\omega$-CZO and $\vec{b}\in {BMO^m}$.
Suppose that $\vec{w} \in {A_{\vec{P}}}$ with $1/p=1/p_1 + \cdots
+1/p_m$ and $1<p_j<\infty$, $j=1,\cdots,m$. If $\omega$ satisfies
\begin{equation}  \label{equ.ldini} 
\int_0^1 \frac{\omega(t)}{t}\left(1+\log\frac1{t}\right)^m dt
<\infty,
\end{equation}
then there exists a constant $C>0$ such that
$$\big\|T_{\Pi\vec{b}}(\vec{f})\big\|_{L^p(\nu_{\vec{w}})}\le
{C}\bigg(\prod_{j=1}^m\|b_j\|_{BMO}\bigg)
\prod_{j=1}^m\|f_j\|_{L^{p_j}(w_j)}.
$$
\end{theorem}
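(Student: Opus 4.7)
My plan is to follow the sharp-maximal-function approach used by P\'erez--Pradolini--Torres--Trujillo \cite{pptt} for the standard-kernel case, with the kernel regularity supplied by the Dini hypothesis \eqref{equ.ldini} instead of a H\"older condition. The key step is to establish, for $0<\delta<\varepsilon<1/m$, a pointwise bound of the form
\[
M^{\#}_{\delta}\bigl(T_{\Pi\vec{b}}(\vec{f})\bigr)(x)\le C\prod_{j=1}^{m}\|b_j\|_{BMO}\,\mathcal{M}_{L(\log L)}(\vec{f})(x)+C\sum_{\varnothing\neq\sigma\subseteq\{1,\ldots,m\}}\Bigl(\prod_{j\in\sigma}\|b_j\|_{BMO}\Bigr)M_{\varepsilon}\bigl(T_{\Pi\vec{b}_{\sigma'}}(\vec{f})\bigr)(x),
\]
where $\sigma'=\{1,\ldots,m\}\setminus\sigma$ (with the convention $T_{\Pi\vec{b}_{\varnothing}}=T$), $M_\varepsilon g=M(|g|^\varepsilon)^{1/\varepsilon}$, and $\mathcal{M}_{L(\log L)}$ is the multilinear $L(\log L)$ maximal operator of \cite{loptt}. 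Granting this, induction on the length of the BMO tuple (base case $\vec b=\varnothing$ being Theorem \ref{thm.B}), the Fefferman--Stein sharp-function inequality on $L^p(\nu_{\vec{w}})$, and the weighted bound $\mathcal{M}_{L(\log L)}\colon\prod_j L^{p_j}(w_j)\to L^p(\nu_{\vec{w}})$ proved in \cite{loptt} close the argument. A preliminary truncation of the $b_j$ (e.g.\ $b_j^{N}=\max(-N,\min(b_j,N))$) is required to guarantee a priori finiteness of the left-hand side before Fefferman--Stein can be invoked.

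\textbf{Deriving the pointwise estimate.} Fix a cube $Q\ni x$ centered at $x_0$, set $\lambda_j=(b_j)_{Q^{\ast}}$ for a suitable dilate $Q^{\ast}\supset Q$, and use the algebraic identity
\[
\prod_{j=1}^{m}\bigl(b_j(x)-b_j(y_j)\bigr)=\prod_{j=1}^{m}\Bigl(\bigl(b_j(x)-\lambda_j\bigr)-\bigl(b_j(y_j)-\lambda_j\bigr)\Bigr)
\]
to expand $T_{\Pi\vec{b}}(\vec{f})(x)$ into $2^{m}$ terms indexed by subsets $\sigma$ of $\{1,\ldots,m\}$. The terms where at least one factor $(b_j(x)-\lambda_j)$ sits outside the integral, after absorbing $b_j(x)-\lambda_j=(b_j(x)-(b_j)_Q)+((b_j)_Q-\lambda_j)$ and averaging on $Q$, produce the $M_\varepsilon\bigl(T_{\Pi\vec{b}_{\sigma'}}(\vec{f})\bigr)$ contributions of the sharp-function bound. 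In the single term where every $(b_j(y_j)-\lambda_j)$ lives inside the integral, decompose each $f_j=f_j^{0}+f_j^{\infty}$ with $f_j^{0}=f_j\chi_{Q^{\ast}}$. The $2^m$ resulting pieces containing at least one $f_j^{0}$ are controlled by Kolmogorov's inequality, the weak-type $L^{1/m,\infty}$ bound from Theorem \ref{thm.B}, and the Luxemburg--BMO estimate $\|(b_j-\lambda_j)f_j\|_{L(\log L),Q^{\ast}}\lesssim\|b_j\|_{BMO}\|f_j\|_{L(\log L),Q^{\ast}}$, which delivers the $\mathcal{M}_{L(\log L)}$ term.

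\textbf{Main obstacle and the role of condition \eqref{equ.ldini}.} The delicate piece is the fully global term, in which every $f_j$ is replaced by $f_j^{\infty}$. Here one must invoke the kernel smoothness \eqref{equ.k2} to compare $K(z,\vec{y})$ with $K(x_0,\vec{y})$ for $z\in Q$. Partitioning $(\rn\setminus Q^{\ast})^m$ into dyadic annular shells of scale $2^{k}\ell(Q)$, the kernel difference contributes a factor $\omega(2^{-k})$, while integrating each $|b_j(y_j)-\lambda_j||f_j(y_j)|$ against the $k$-th shell and using John--Nirenberg together with the Luxemburg duality of $L(\log L)$ and $\exp L$ yields the per-factor bound $(1+k)\|b_j\|_{BMO}\|f_j\|_{L(\log L),2^{k}Q^{\ast}}$. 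Taking the product over $j$ and dominating the resulting Luxemburg averages by $\mathcal{M}_{L(\log L)}(\vec{f})(x)$ then produces the summand $\omega(2^{-k})(1+k)^{m}\prod_{j}\|b_j\|_{BMO}\mathcal{M}_{L(\log L)}(\vec{f})(x)$, so the full global term is finite provided
\[
\sum_{k=0}^{\infty}\omega(2^{-k})(1+k)^{m}<\infty,
\]
which is exactly the dyadic version of \eqref{equ.ldini}. Hence each BMO factor that must survive the kernel cancellation costs one extra logarithm, and this is the sole reason the weight $(1+\log(1/t))^m$ appears in the hypothesis; proving this summability and verifying that the Luxemburg-norm comparisons remain valid under the weaker Dini regularity constitutes the main technical work, the remaining steps being direct adaptations of \cite{pptt,loptt}.
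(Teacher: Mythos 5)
Your proposal is correct and follows essentially the same route as the paper: the same sharp-maximal pointwise estimate obtained by the $\lambda_j$-shift expansion into $2^m$ terms, the splitting $f_j=f_j^0+f_j^{\infty}$, dyadic annular decomposition combined with the kernel regularity \eqref{equ.k2} and the Dini hypothesis to produce the summand $(1+k)^m\omega(2^{-k})$, and the John--Nirenberg/Luxemburg estimate giving the per-factor bound $(1+k)\|b_j\|_{BMO}\|f_j\|_{L(\log L),2^kQ^{\ast}}$. The endgame (truncation of the $b_j$ to secure a priori finiteness, Fefferman--Stein on $L^p(\nu_{\vec w})$ using $\nu_{\vec w}\in A_\infty$, induction over the length of the BMO tuple, and the weighted bound on $\mathcal{M}_{L(\log L)}$ via $\mathcal{M}_r$) matches the paper's argument.
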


\begin{remark} 
Since the commutator has more singularity, the more regular
conditions imposed on the kernel is reasonable. In addition,
although (\ref{equ.ldini}) is stronger than the $Dini(1)$ condition
but it is much weaker than the standard kernel $\omega(t)=t^{\varepsilon}$.
A standard Calder\'on-Zygmund kernel is also a Calder\'on-Zygmund
kernel of type $\omega$ with $\omega(t)$ satisfying
(\ref{equ.ldini}) in the linear case, so does in the multilinear
case.
\end{remark}

It is easy to check that if $\omega$ satisfies (\ref{equ.ldini}),
then $\omega \in {Dini(1)}$ and
\begin{equation}   \label{equ.ldini-e}  
\sum_{k=1}^{\infty}k^m \cdot\omega(2^{-k}) \approx \int_0^1
\frac{\omega(t)}{t}\left(1+\log\frac1{t}\right)^m dt <\infty.
\end{equation}

For the multiple weighted weak-type estimate, we have the following
result.

\begin{theorem} \label{thm.1.2}
Let $T$ be an $m$-linear $\omega$-CZO and $\vec{b}\in {BMO^m}$.
If $\vec{w} \in {A_{(1,\cdots,1)}}$ and $\omega$ satisfies
(\ref{equ.ldini}), then there is a constant $C>0$ depending on
$\|{b_j}\|_{BMO}$, $j=1,\cdots,m$, such that for any $\lambda>0$,
$$\nu_{\vec{w}}\big(\big\{ x\in \rn: \big|T_{\Pi\vec{b}}(\vec{f})(x)
\big|>\lambda^m \big\}\big) \le {C} \prod_{j=1}^m \left(
\int_{\rn}\Phi^{(m)} \bigg(\frac{|f_j(x)|}{\lambda}\bigg)
w_j(x)dx\right)^{1/m},
$$
here and in the sequel, $\Phi(t)=t(1+\log^+t)$ and
$\Phi^{(m)}=\underbrace{\Phi \circ \cdots \circ \Phi}_m$.
\end{theorem}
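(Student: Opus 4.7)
The plan is to follow the multilinear Calder\'on-Zygmund scheme of P\'erez, Pradolini, Torres and Trujillo-Gonz\'alez for endpoint estimates of iterated commutators, adapted to the $\omega$-CZO setting. By $m$-linearity of $T_{\Pi\vec{b}}$ in both $\vec{b}$ and $\vec{f}$ together with scaling, I first normalize $\|b_j\|_{BMO}=1$ for all $j$, $\lambda=1$, and $f_j\ge 0$. For each $j$ I apply a Calder\'on-Zygmund decomposition of $f_j$ at height $1$, writing $f_j=g_j+h_j$ with $\|g_j\|_\infty\le C$ and $h_j=\sum_k h_{j,k}$ a sum of mean-zero pieces supported on disjoint cubes $Q_{j,k}$ with $\sum_k|Q_{j,k}|\le C\|f_j\|_1$, and set $\Omega^*=\bigcup_{j,k}8\sqrt{n}\,Q_{j,k}$. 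Since $\vec{w}\in A_{(1,\dots,1)}$ forces $\nu_{\vec{w}}\in A_m\subset A_\infty$, so $\nu_{\vec{w}}$ is doubling on these expansions, the standard multi-weight CZ argument yields $\nu_{\vec{w}}(\Omega^*)\le C\prod_j(\int f_j w_j)^{1/m}\le C\prod_j(\int\Phi^{(m)}(f_j)w_j)^{1/m}$ (using $\Phi^{(m)}(t)\ge t$).

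By multilinearity, $T_{\Pi\vec{b}}(\vec{f})$ expands as a sum of $2^m$ terms indexed by subsets $S\subset\{1,\dots,m\}$, in which position $j$ is occupied by $h_j$ if $j\in S$ and by $g_j$ if $j\notin S$. For the ``all-good'' term ($S=\emptyset$), I combine Chebyshev's inequality with Theorem \ref{thm.1.1} at some fixed exponent $p>1$ (e.g.\ $p=2$): since $\vec{w}\in A_{(1,\dots,1)}\subset A_{(p,\dots,p)}$, one gets $\nu_{\vec{w}}(\{|T_{\Pi\vec{b}}(\vec{g})|>2^{-m}\})\le C\prod_j\|g_j\|_{L^p(w_j)}^{p/m}$, and the bound $|g_j|\le C$ combined with the $A_1$-control of $w_j$ on the CZ cubes gives $\int g_j^p w_j\le C\int f_j w_j\le C\int\Phi^{(m)}(f_j)w_j$, as required.

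For each nonempty $S$ the corresponding term is handled on $(\Omega^*)^c$: Chebyshev reduces the task to estimating $\int_{(\Omega^*)^c}|T_{\Pi\vec{b}}(\cdots)|\nu_{\vec{w}}$. I insert $h_j=\sum_{k_j}h_{j,k_j}$ for $j\in S$ into the kernel representation of $T_{\Pi\vec{b}}$ and exploit the cancellation $\int h_{j,k_j}=0$ by the standard trick: split every factor $b_i(x)-b_i(y_i)$ as $(b_i(x)-(b_i)_{Q_{j,k_j}})-(b_i(y_i)-(b_i)_{Q_{j,k_j}})$, expand the product, and for each $j\in S$ replace $K(x,\vec{y})$ by its value at the center of $Q_{j,k_j}$ in the $y_j$-slot. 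The resulting kernel difference is bounded by (\ref{equ.k3}) on dyadic annular shells $2^\ell Q_{j,k_j}$, producing a gain of $\omega(2^{-\ell})$ per slot; the BMO oscillations on both the $x$-side and the $y_i$-side are controlled through John-Nirenberg, contributing at most a polynomial factor $\ell^m$ in the annular index.

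The summability $\sum_\ell\ell^m\omega(2^{-\ell})<\infty$ --- equivalent to the hypothesis (\ref{equ.ldini}) via (\ref{equ.ldini-e}) --- is exactly what closes these annular sums. After assembling everything, the $L\log L$-type averages of $f_j$ over the CZ cubes, forced by the John-Nirenberg estimates of the $|b_i-(b_i)_Q|$ factors acting against $|f_j|$, combine to produce the $\Phi^{(m)}$ envelope; a final H\"older with exponents $(m,\dots,m)$ distributes the product $\prod_j(\int\Phi^{(m)}(f_j)w_j)^{1/m}$. The hardest step will be the multi-index bookkeeping in the nonempty-$S$ case: every BMO factor $b_i$, whether it ends up on the $x$-side or the $y$-side, must be correctly paired with an Orlicz average on a CZ cube or with a kernel annular sum, and the John-Nirenberg iterations must be organized so that the polynomial weight in $\ell$ never exceeds $\ell^m$. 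It is precisely here that the regularity assumption (\ref{equ.ldini}), strictly stronger than $Dini(1)$, becomes indispensable.
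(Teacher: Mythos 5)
Your proposal takes a genuinely different route from the paper. The paper never touches a Calder\'on--Zygmund decomposition of the $f_j$'s in this proof: it deduces Theorem \ref{thm.1.2} in a few lines from two pieces of machinery it has already built, namely the weak-type sharp-maximal estimate (\ref{equ.prop-2}) in Proposition \ref{prop.2} (itself a consequence of the pointwise bound in Proposition \ref{prop.sharp} together with the Fefferman--Stein weak-type inequality, Lemma \ref{lem.fs}(2)), and the known endpoint estimate for $\mathcal{M}_{L(\log L)}$, Lemma \ref{lem.m-llogl}. After normalizing by homogeneity, the passage from the distribution set of $T_{\Pi\vec{b}}$ to that of $\mathcal{M}_{L(\log L)}$ is immediate from (\ref{equ.prop-2}), and one finishes by the submultiplicativity $\Phi^{(m)}(st)\le C\,\Phi^{(m)}(s)\Phi^{(m)}(t)$. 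Your scheme, by contrast, tries to redo the endpoint estimate from scratch, essentially replaying the argument one would use to prove Lemma \ref{lem.m-llogl} but with the commutator in place of the maximal function.

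As written, your approach has a concrete gap at the all-good term. You invoke Theorem \ref{thm.1.1} with $\vec{P}=(p,\dots,p)$ for some $p>1$ (the inclusion $A_{(1,\dots,1)}\subset A_{(p,\dots,p)}$ is indeed correct), and then claim $\int g_j^p w_j\le C\int f_j w_j$ ``using the $A_1$-control of $w_j$ on the CZ cubes.'' But $\vec{w}\in A_{(1,\dots,1)}$ only gives, via Lemma \ref{lem.ww}, that $w_j^{1/m}\in A_1$, \emph{not} that $w_j\in A_1$; and $w_j^{1/m}\in A_1$ does not imply $w_j\in A_1$ (e.g.\ $w_j(x)=|x|^{-\alpha}$ with $0<\alpha<n$ has $w_j^{1/m}\in A_1$ even when $m\alpha\ge n$ kills the $A_1$ property of $w_j$ itself). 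Without $w_j\in A_1$ you cannot bound $w_j(Q_{j,k})/|Q_{j,k}|$ by $\inf_{Q_{j,k}}w_j$, and the chain $\sum_k (f_j)_{Q_{j,k}}^p\,w_j(Q_{j,k})\lesssim \int f_j w_j$ breaks down. In addition, the decomposition into the $2^m$ mixed terms, the annular kernel sums with $(b_i-(b_i)_Q)$ factors tested against $\nu_{\vec{w}}$, and the organization of the John--Nirenberg losses so that only $\ell^m$ appears, are left at the level of a sketch; these are exactly the pieces the paper avoids having to redo by quoting (\ref{equ.prop-2}) and Lemma \ref{lem.m-llogl}. In short: the architecture is different and potentially interesting, but the all-good term estimate is not justified under the stated hypothesis, and the bad-term bookkeeping is incomplete; the paper's route is both shorter and sound.
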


\begin{remark}
P\'erez et al. \cite{pptt} proved the same results as Theorems \ref{thm.1.1}
and \ref{thm.1.2} when $\omega(t)=t^{\varepsilon}$ for some
$\varepsilon>0$. So, Theorems \ref{thm.1.1} and \ref{thm.1.2}
improve the main results in \cite{pptt} essentially. We also note
that similar results for $T_{\Sigma\vec{b}}$ were proved in
\cite{lz}. For the linear commutator of Calder\'on-Zygmund operator,
see \cite{ll} and \cite{zx}.
\end{remark}

Next, we will study the boundedness of iterated commutators of
$m$-linear $\omega$-CZOs on weighted variable exponent Lebesgue
spaces. We now recall some definition and notation. For more
information on function spaces with variable exponent,
we refer to \cite{cu-f} and \cite{dhhr}.

A measurable function $p(\cdot): \rn \to (0,\infty)$ is called
a variable exponent. For any variable exponent $p(\cdot)$, let
$$ p^{-}:=\essinf_{x\in \rn}p(x) ~~\mathrm{and}~~
{p^{+}:}=\esssup_{x\in \rn}p(x).
$$
Denote by $\mathscr{P}_0$ the set of all variable
exponents $p(\cdot)$ satisfying $0<p^{-} \le {p^{+}} <\infty$
and by $\mathscr{P}$ the collection of all variable exponents
$p(\cdot): \rn\to[1,\infty)$ with $1<p^{-}\le {p^{+}}<\infty$.

Given a variable exponent $p(\cdot) \in \mathscr{P}_0$, the
variable exponent Lebesgue space is defined by
$$ L^{p(\cdot)}(\rn)=\bigg\{f~ \mbox{measurable}:
\int_{\rn} \bigg(\frac{|f(x)|}{\lambda}\bigg)^{p(x)} dx <\infty
~\mbox{for some constant}~ \lambda>0\bigg\}.
$$
The set $L^{p(\cdot)}(\rn)$ is a quasi-Banach space (Banach
space if $p(\cdot) \in \mathscr{P}$) with the quasi-norm
(norm if $p(\cdot) \in \mathscr{P}$) given by
$$\|f\|_{L^{p(\cdot)}(\rn)}=\inf \bigg\{ \lambda>0: \int_{\rn}
\bigg(\frac{|f(x)|}{\lambda} \bigg)^{p(x)} dx \le 1 \bigg\}.
$$

For $p(\cdot)\in \mathscr{P}$, we define the conjugate
exponent $p'(\cdot)$ by $1/p(\cdot) +1/p'(\cdot)=1$
with $1/\infty=0$.

\begin{definition}[\cite{cfn}] \label{def.1.3}
A locally integrable function $v$ with $0<v<\infty$ almost
everywhere is called a weight. Given a variable exponent
$p(\cdot)\in \mathscr{P}$, we say that $v \in{A_{p(\cdot)}}$
if
$$[v]_{A(\cdot)} =\sup_{B} |B|^{-1}\|v\chi_B\|_{L^{p(\cdot)}(\rn)}
\|v^{-1}\chi_B\|_{L^{p'(\cdot)}(\rn)} < \infty,
$$
where the supremum is taken over all balls $B\subset \rn$.
\end{definition}

The weighted variable exponent Lebesgue space
$L^{p(\cdot)}_{v}(\rn)$ is defined to be the set of all
measurable functions $f$ such that $fv\in {L^{p(\cdot)}(\rn)}$,
and we define
$\|f\|_{L_{v}^{p(\cdot)}(\rn)}=\|fv\|_{L^{p(\cdot)}(\rn)}$.

We say that a variable exponent $p(\cdot)$ satisfies
the globally log-H\"older continuous condition,
if there exist positive constants $C_0, C_{\infty}$
and $p_{\infty}$ such that
\begin{equation}    \label{equ.log1}
|p(x)-p(y) \le \frac{C_0}{-\log(|x-y|)}, \quad x,y \in \rn,
 \ |x-y|\le 1/2
\end{equation}
and
\begin{equation}    \label{equ.log2}
|p(x)-p_{\infty}| \le \frac{C_{\infty}}{\log(e+|x|)}, \quad x \in \rn.
\end{equation}

For iterated commutator of $m$-linear $\omega$-CZO, we have the
following results.

\begin{theorem} \label{thm.1.3}   
Let $T$ be an $m$-linear $\omega$-CZO with $\omega$ satisfying
(\ref{equ.ldini}) and $\vec{b}\in {BMO^m}$. Suppose that
$p(\cdot) \in \mathscr{P}_0$ and $p_1(\cdot), \cdots, p_m(\cdot)
\in \mathscr{P}$ so that
${1}/{p(\cdot)} ={1}/{p_1(\cdot)}+\cdots +{1}/{p_m(\cdot)}.$
If $p(\cdot), p_1(\cdot), \cdots, p_m(\cdot)$ satisfy (\ref{equ.log1})
and (\ref{equ.log2}), $v_j \in {A_{p_j(\cdot)}}$,
$j=1,\cdots,m$, and $v=\prod_{j=1}^m v_j$, then $T_{\Pi\vec{b}}$
can be extended to a bounded operator from
$L_{v_1}^{p_1(\cdot)}(\rn)\times \cdots \times {L_{v_m}^{p_m(\cdot)}(\rn)}$
to $L_v^{p(\cdot)}(\rn)$, that is, there exists a positive constant
$C$ such that
$$\big\|T_{\Pi\vec{b}}(\vec{f})\big\|_{L_v^{p(\cdot)}(\rn)}\le
{C}\prod_{j=1}^{m}\|f_j\|_{L_{v_j}^{p_j(\cdot)}(\rn)}.
$$
\end{theorem}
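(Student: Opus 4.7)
The plan is to derive Theorem \ref{thm.1.3} from the constant-exponent multiple weighted strong-type estimate already established in Theorem \ref{thm.1.1}, by means of a multilinear Rubio de Francia extrapolation theorem adapted to weighted variable exponent Lebesgue spaces. The idea is that once one has the full strength of the $A_{\vec{P}}$ weighted bound for all choices of indices $1<p_j<\infty$, the passage to variable exponents is a standard (though technical) consequence of extrapolation, provided the exponents are log-H\"older continuous and the weights lie in $A_{p_j(\cdot)}$ componentwise.

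First, I would invoke the following multilinear extrapolation principle: if an $m$-linear operator $S$ satisfies
$$\|S(\vec{f})\|_{L^{q}(\nu_{\vec{w}})}\le C\prod_{j=1}^m\|f_j\|_{L^{q_j}(w_j)}$$
for some (equivalently, for all) tuples $1<q_1,\ldots,q_m<\infty$ with $1/q=\sum_j 1/q_j$ and every $\vec{w}\in A_{\vec{Q}}$, then for any $p_j(\cdot)\in\mathscr{P}$ satisfying (\ref{equ.log1}) and (\ref{equ.log2}) and any $v_j\in A_{p_j(\cdot)}$, the same operator satisfies
$$\|S(\vec{f})\|_{L_v^{p(\cdot)}(\rn)}\le C\prod_{j=1}^m\|f_j\|_{L_{v_j}^{p_j(\cdot)}(\rn)},$$
with $v=\prod_j v_j$ and $1/p(\cdot)=\sum_j 1/p_j(\cdot)$. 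Such a result is obtained by combining the multilinear extrapolation of Grafakos--Martell and Cruz-Uribe--Martell--P\'erez with the Cruz-Uribe--Fiorenza--Neugebauer extrapolation into variable exponent spaces; the log-H\"older hypotheses guarantee that the Hardy--Littlewood maximal operator is bounded on each $L^{p_j(\cdot)}_{v_j}(\rn)$, which is the analytic engine driving the Rubio de Francia iteration.

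Second, I would apply this machinery with $S=T_{\Pi\vec{b}}$. Fixing any tuple $1<q_1,\ldots,q_m<\infty$ with $1/q=\sum 1/q_j$, Theorem \ref{thm.1.1} supplies exactly the hypothesis of the extrapolation theorem, with a constant depending on $\prod_{j=1}^m\|b_j\|_{BMO}$ (note that $\omega$ satisfies (\ref{equ.ldini}) is precisely what Theorem \ref{thm.1.1} requires). The assumptions of Theorem \ref{thm.1.3} on $p(\cdot),p_j(\cdot)$ and the weights $v_j$ are exactly those demanded by the extrapolation, and the resulting inequality is the conclusion of the theorem.

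The main obstacle is locating (or, if necessary, assembling) a suitably general multilinear extrapolation theorem into weighted variable exponent Lebesgue spaces whose weight class is $A_{p_j(\cdot)}$ componentwise and whose hypothesis is phrased in terms of the classical $A_{\vec{P}}$ multiple weights; one has to check that the extrapolation indeed respects the componentwise weighted structure and that the product weight $v=\prod_j v_j$ is the correct target weight on the left-hand side. Once this ingredient is in place, the proof reduces to bookkeeping: verifying that $1/p(\cdot)=\sum_j 1/p_j(\cdot)$ is preserved and that the constant absorbs the $\prod_j\|b_j\|_{BMO}$ factor coming from Theorem \ref{thm.1.1}. No further pointwise or sharp maximal function estimate beyond what is already used in the proof of Theorem \ref{thm.1.1} is required.
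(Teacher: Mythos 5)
Your plan is a genuinely different route from the paper's, and it contains a real gap that you yourself flag but do not close. The paper does not extrapolate the operator $T_{\Pi\vec{b}}$ from $A_{\vec{P}}$ weighted estimates; instead it runs a \emph{scalar} $A_\infty$ extrapolation on pairs of functions. Concretely: Proposition \ref{prop.2} gives the Coifman--Fefferman-type inequality $\int |T_{\Pi\vec{b}}(\vec{f})|^{p_0} w \le C \int [\mathcal{M}_{L(\log L)}(\vec{f})]^{p_0} w$ for every $w\in A_\infty$ and $0<p_0<\infty$; the paper then applies the Cruz-Uribe--Wang extrapolation (Lemma \ref{lem.extra}) to the family of pairs $(T_k(\vec{f}), \mathcal{M}_{L(\log L)}(\vec{f}))$ with $s=1/m$, using Lemma \ref{lem.weight} to verify $v^{1/m}\in A_{mp(\cdot)}$ and Lemma \ref{lem.c-f-n} to get $M$ bounded on $L^{(mp(\cdot))'}_{v^{-1/m}}$. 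This yields $\|T_k(\vec{f})\|_{L_v^{p(\cdot)}} \lesssim \|\mathcal{M}_{L(\log L)}(\vec{f})\|_{L_v^{p(\cdot)}}$. Then the multilinear structure is handled by the pointwise bound $\mathcal{M}_{L(\log L)}(\vec{f}) \lesssim \prod_j M^2 f_j$, the generalized H\"older inequality (Lemma \ref{lem.gholder-m}), and two more applications of Lemma \ref{lem.c-f-n} componentwise, followed by monotone convergence to remove the truncation $T_k$. No multilinear extrapolation theorem is needed at all.

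Your proposal instead hinges on a ``multilinear Rubio de Francia extrapolation theorem into weighted variable exponent Lebesgue spaces'' taking $A_{\vec{P}}$ weighted estimates for \emph{operators} as input and producing inequalities with componentwise $A_{p_j(\cdot)}$ weights and product target weight. You cite Grafakos--Martell and Cruz-Uribe--Martell--P\'erez for multilinear extrapolation in the constant-exponent setting, and Cruz-Uribe--Fiorenza--Neugebauer for maximal function bounds in variable exponent spaces, but ``combining'' these is not a standard corollary: the variable-exponent extrapolation result actually available in the literature the paper relies on (Cruz-Uribe--Wang, Lemma \ref{lem.extra}) is a \emph{single-weight, $A_\infty$, pair-of-functions} statement, not a multilinear $A_{\vec{P}}$ statement. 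You explicitly identify assembling that multilinear variable-exponent extrapolation as ``the main obstacle'' and leave it open; that is precisely the missing idea. The paper's approach is to route around this obstacle: the $A_\infty$ maximal-function control from Proposition \ref{prop.2} lets one use the scalar extrapolation machinery, and the multilinearity is then recovered cheaply from the product structure of $\mathcal{M}_{L(\log L)}$. Additionally, your proposal omits the truncation/density/monotone-convergence bookkeeping needed to verify the finiteness hypothesis in Lemma \ref{lem.extra} (the requirement that the left-hand side be finite before extrapolation applies), which the paper handles carefully via $T_k$ and Lemmas \ref{lem.density} and \ref{lem.monotone}.
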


\begin{remark}
By the same procedure as the proof of Theorem \ref{thm.1.3}, we can obtain
similar estimates in weighted variable exponent Lebesgue spaces for
$m$-linear $\omega$-CZO and $m$-linear commutator $T_{\Sigma{\vec{b}}}$. We
leave the details to the readers.
\end{remark}

This paper is arranged as follows. In Section \ref{pre}, we recall
some basic definitions and known results. Section \ref{proof} is
devoted to proving Theorems \ref{thm.1.1} and \ref{thm.1.2}. We give
the proof of Theorem \ref{thm.1.3} in Section \ref{variable}. In the
last section, we apply our results to paraproducts and bilinear
pseudo-differential operators with mild regularity.


\section{Definitions and Preliminaries}   \label{pre}

\subsection{Orlicz norms}

For $\Phi=t(1+\log^{+}t)$, the $\Phi$-average of a function $f$ on a
cube $Q\subset \rn$ is defined by
$$\|f\|_{L(\log{L}),Q}\equiv \|f\|_{\Phi,Q}:=\inf\left\{\lambda>0:
\frac1{|Q|} \int_{Q}\Phi\left(\frac{|f(x)|}{\lambda}\right)dx\le
1\right\}.
$$
The maximal function associated to $\Phi(t)=t(1+\log^+t)$ is defined
by
$$M_{L(\log{L})}(f)(x)=\sup_{Q\ni {x}}\|f\|_{L(\log{L}),Q},
$$
where the supremum is taken over all the cubes containing $x$.

It is not difficult to check the following pointwise equivalence
(see (21) in \cite{pe})
$$M_{L(\log{L})}(f)(x) \approx M^2f(x),
$$
where $M$ is the Hardy-Littlewood maximal function and $M^2=M\circ {M}$.

For a cube $Q$ and a locally integrable function $f$, we
denote by $f_Q=(f)_Q=\frac1{|Q|}\int_Qf(y)dy$.
Let $b \in {BMO(\rn)}$, by the generalized H\"older's
inequality in Orlicz spaces (see \cite[page 58]{rr}) and
John-Nirenberg's inequality, we have (see also (2.14) in
\cite{loptt})
\begin{equation}       \label{equ.bmo1}  
\frac1{|Q|}\int_Q |b(x)-b_Q||f(x)|dx \le
{C}\|b\|_{BMO}\|f\|_{L(\log{L}),Q}.
\end{equation}

Let $t>1$ and $Q$ be a cube in $\rn$. Denote by $tQ$ the cube that is
concentric with $Q$ and whose side length is $t$ times  the side
length of $Q$. Then, there exists a dimensional constant $C_n$ such
that for any $b\in{BMO(\rn)}$, we have (see \cite[page 166]{gra})
\begin{equation}       \label{equ.bmo2}  
|b_Q-b_{tQ}|\le {C_n}\log(t+1)\|b\|_{BMO}.
\end{equation}

Moreover, for any $r>0$ and $t>1$,
\begin{equation}       \label{equ.bmo3}  
\begin{split}
\bigg(\frac1{|Q|}\int_Q|b(z)-b_{tQ}|^{r}dz\bigg)^{1/r}
&\le{C}\bigg(\frac1{|Q|}\int_Q\big|b(z)-b_{Q}\big|^{r}dz
  +|b_Q-b_{tQ}|^r\bigg)^{1/r} \\
&\le{C}(1+\log{t})\|b\|_{BMO}.
\end{split}
\end{equation}


\subsection{Sharp maximal function and $A_p$ weights }

The sharp maximal function of
Fefferman and Stein \cite{fs} is defined by
$$M^{\sharp}(f)(x)=\sup_{Q\ni{x}}\inf_c\frac1{|Q|}\int_Q|f(y)-c|dy
\approx \sup_{Q\ni{x}}\frac1{|Q|}\int_Q|f(y)-f_Q|dy.
$$

Let $M$ be the usual Hardy-Littlewood maximal function, for
$0<\delta<\infty$, we define the maximal functions $M_{\delta}$ and
$M_{\delta}^{\sharp}$ by
$$M_{\delta}(f)=\big[M(|f|^{\delta})\big]^{1/\delta} \quad
{\mbox{and}} \quad M_{\delta}^{\sharp}(f)=\big[M^{\sharp}
\big(|f|^{\delta})\big]^{1/\delta}.
$$

Let $w$ be a nonnegative locally integrable function and
$1<p<\infty$. We say $w \in {A_p}$ if there is a
constant $C>0$ such that for any cube $Q$,
$$\bigg(\frac1{|Q|}\int_Q w(x)dx\bigg) \bigg(\frac1{|Q|}\int_Q
w(x)^{1-p'}dx\bigg)^{p-1} \le C.
$$

We say $w \in {A_1}$ if there is a constant $C>0$ such that
$Mw(x)\le {C}w(x)$ almost everywhere. And we define
$A_{\infty}=\bigcup_{p\ge 1}A_p$. See \cite{gar-rub} or \cite{s} for
more information.

The following relationships between $M_{\delta}^{\sharp}$ and
$M_{\delta}$ to be used is a version of the classical ones due to
Fefferman and Stein \cite{fs}, see also \cite[page 1228]{loptt}.

\begin{lemma}   \label{lem.fs}  
{\rm (1)} Let $0<p, \delta <\infty$ and $w \in {A_{\infty}}$. Then
there exists a constant $C>0$ (depending on the $A_{\infty}$
constant of $w$) such that
$$\int_{\rn}\big[M_{\delta}(f)(x)\big]^pw(x)dx \le
{C}\int_{\rn}\big[M_{\delta}^{\sharp}(f)(x)\big]^pw(x)dx,
$$
for every function $f$ such that the left-hand side is finite.

{\rm (2)}  Let $0<\delta <\infty$ and $w \in {A_{\infty}}$. If
$\varphi : (0,\infty) \to (0,\infty)$ is doubling, then there exists
a constant $C>0$ (depending on the $A_{\infty}$ constant of $w$ and
the doubling condition of $\varphi$) such that
$$\sup_{\lambda>0}\varphi(\lambda)w\big(\big\{y\in\rn:
M_{\delta}(f)(y)>\lambda\big\}\big) \le {C}\sup_{\lambda>0}
\varphi(\lambda) w\big(\big\{y\in\rn: M_{\delta}^{\sharp}(f)(y)
>\lambda\big\}\big),
$$
for every function $f$ such that the left-hand side is finite.
\end{lemma}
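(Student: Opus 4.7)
The plan is to derive Lemma \ref{lem.fs} from the classical Fefferman--Stein good-$\lambda$ inequality adapted to $A_\infty$ weights. First I would reduce to the case $\delta=1$: since $M_\delta(f)=[M(|f|^\delta)]^{1/\delta}$ and $M_\delta^\sharp(f)=[M^\sharp(|f|^\delta)]^{1/\delta}$, substituting $g=|f|^\delta$ and $q=p/\delta$ in part (1) turns the asserted inequality into
$$\int_{\rn}[Mg(x)]^q w(x)\,dx \le C\int_{\rn}[M^\sharp g(x)]^q w(x)\,dx,$$
while part (2) becomes the weak-type analogue with the auxiliary gauge $\psi(\lambda)=\varphi(\lambda^{1/\delta})$, which is again doubling. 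Hence it suffices to prove both assertions with $\delta=1$.

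Next I would invoke the standard good-$\lambda$ inequality: for $w\in A_\infty$ there exist constants $\epsilon>0$ and $C_0>0$ depending only on the $A_\infty$ constant of $w$ such that, for every $\gamma\in(0,1)$ and every $\lambda>0$,
$$w\big(\big\{Mg>2\lambda,\ M^\sharp g\le\gamma\lambda\big\}\big)\le C_0\,\gamma^{\epsilon}\,w\big(\big\{Mg>\lambda\big\}\big).$$
This is proved by a Calder\'on--Zygmund decomposition of the open set $\{Mg>\lambda\}$ into maximal disjoint dyadic cubes and exploiting the reverse-H\"older / doubling property of $w$ on each stopping cube; I would simply cite \cite{gar-rub} or \cite{s} for this step rather than reprove it.

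For part (1), I would write the left-hand side as $q\int_0^\infty\lambda^{q-1}w(\{Mg>\lambda\})\,d\lambda$, change variables $\lambda\mapsto2\lambda$, and split $\{Mg>2\lambda\}\subset\{Mg>2\lambda,\,M^\sharp g\le\gamma\lambda\}\cup\{M^\sharp g>\gamma\lambda\}$. Applying the good-$\lambda$ bound to the first piece and a trivial weak-$(1,1)$-type estimate to the second produces
$$\int_{\rn}[Mg]^q w\,\le\,2^q C_0\,\gamma^{\epsilon}\int_{\rn}[Mg]^q w\,+\,(2/\gamma)^{q}\int_{\rn}[M^\sharp g]^q w.$$
Choosing $\gamma$ so small that $2^q C_0\gamma^{\epsilon}<1/2$ and using the standing assumption that the left side is finite, one absorbs the first term on the right, which gives the claim.

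For part (2), I would multiply the good-$\lambda$ inequality through by $\varphi(\lambda)$ and take the supremum over $\lambda>0$; the doubling of $\varphi$ allows one to replace $\varphi(\lambda)$ by a constant multiple of $\varphi(2\lambda)$, producing an inequality of the schematic form
$$\sup_{\lambda>0}\varphi(\lambda)w(\{Mg>\lambda\})\le C\gamma^{\epsilon}\sup_{\lambda>0}\varphi(\lambda)w(\{Mg>\lambda\})+C\sup_{\lambda>0}\varphi(\lambda)w(\{M^\sharp g>\gamma\lambda\}),$$
after which the same absorption argument, again justified by the finiteness hypothesis, finishes the proof. The main obstacle in both parts is legitimating that absorption step; this is precisely the reason the lemma is stated under the hypothesis that the left-hand side is \emph{a priori} finite, and once one respects that assumption the argument is a mechanical consequence of the good-$\lambda$ inequality.
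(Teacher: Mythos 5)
The paper does not prove this lemma: it is quoted as a known result with attribution to Fefferman--Stein \cite{fs} and to \cite[p.~1228]{loptt}, so there is no ``paper's proof'' to compare against. Your outline is the standard proof of that known result, and it is essentially correct. The reduction to $\delta=1$ by setting $g=|f|^\delta$, $q=p/\delta$ (and $\psi(\lambda)=\varphi(\lambda^{1/\delta})$ in part (2)) is legitimate, and the good-$\lambda$ inequality plus the distribution-function and absorption argument is exactly how the $A_\infty$-weighted Fefferman--Stein estimate is proved in \cite{gar-rub} and \cite{s}.

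Two minor points you should be aware of if you wrote this out in full. First, the Calder\'on--Zygmund stopping-time argument behind the good-$\lambda$ inequality requires $|\{Mg>\lambda\}|<\infty$ (equivalently one works on a large cube and passes to the limit); this is standard and is part of why the lemma carries the hypothesis that the left-hand side is finite, but it is a step beyond just ``cite the good-$\lambda$.'' Second, in part (2) the final replacement $\varphi(\mu/\gamma)\le C_\gamma\,\varphi(\mu)$ does not follow from $\varphi(2t)\le C\varphi(t)$ alone, since that inequality only controls $\varphi$ at smaller arguments; one also needs $\varphi$ to be (quasi-)increasing, or one should use the two-sided formulation of doubling. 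In the setting of \cite{loptt} the gauge $\varphi$ is taken nondecreasing, so this is harmless, but it is worth saying explicitly. With those caveats the argument is complete and matches the intended reference.
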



\subsection{Multilinear maximal functions and multiple weights}

The following multilinear maximal functions that adapts to the
multilinear Calder\'on-Zygmund theory are introduced by Lerner et al.
in \cite{loptt}.

\begin{definition}     \label{def.m-linear M}  
For all $m$-tuples $\vec{f}=(f_1, \cdots, f_m)$ of locally
integrable functions and $x\in\rn$, the multilinear maximal
functions ${\cal{M}}$ and ${\cal{M}}_r$ are defined by
$${\cal{M}}(\vec{f})(x)=\sup_{Q\ni x}\prod_{j=1}^m \frac1{|Q|}
\int_{Q}|f_j(y_j)|dy_j,
$$
and
$${\cal{M}}_r(\vec{f})(x)=\sup_{Q\ni x}\prod_{j=1}^m \bigg(
\frac1{|Q|}\int_{Q}|f_j(y_j)|^rdy_j\bigg)^{1/r},~~ for ~ r>1.
$$
The maximal functions related to function
$\Phi(t)=t(1+\log^+t)$ are defined by
$${\cal{M}}^i_{L(\log{L})}(\vec{f})(x)=\sup_{Q\ni x}
\|f_i\|_{L(\log{L}),Q} \prod_{j=1(j\neq{i})}^m
\frac1{|Q|}\int_{Q}|f_j(y_j)|dy_j
$$
and
$${\cal{M}}_{L(\log{L})}(\vec{f})(x)=\sup_{Q\ni {x}}
\prod_{j=1}^m\|f_j\|_{L(\log{L}),Q},
$$
where the supremum is taken over all the cubes $Q$ containing $x$.
\end{definition}

Obviously, if $r>1$ then the following pointwise estimates hold (see
(4.3) in \cite{lz})
\begin{equation}       \label{equ.mmm}   
{\cal{M}}(\vec{f})(x) \le {C}{\cal{M}}^i_{L(\log{L})}(\vec{f})(x)
\le {C'}{\cal{M}}_{L(\log{L})}(\vec{f})(x) \le {C''}
{\cal{M}}_r(\vec{f})(x).
\end{equation}

The following multiple ${A_{\vec{P}}}$ conditions were introduced
by Lerner et al. \cite{loptt}.

\begin{definition}[\cite{loptt}]  \label{def.AP} 
Let $\vec{P}=(p_1, \cdots, p_m)$ and $1/p = 1/p_1 +\cdots +1/p_m$
with $1\le p_1, \cdots, p_m<\infty$. Given $\vec{w}=(w_1, \cdots,
w_m)$ with each $w_j$ being nonnegative measurable, set
$$\nu_{\vec{w}}=\prod_{j=1}^m w_j^{p/p_j}.
$$
We say that $\vec{w}$ satisfies the ${A_{\vec{P}}}$ condition and
write $\vec{w}\in {A_{\vec{P}}}$, if
$$\sup_Q \left(\frac{1}{|Q|}\int_Q\nu_{\vec{w}}(x)dx\right)^{1/p}
\prod_{j=1}^m \left(\frac{1}{|Q|}\int_{Q}
w_j(x)^{1-p'_j}dx\right)^{1/p'_j} <\infty,
$$
where the supremum is taken over all cubes $Q\subset \rn$, and the
term $\big(\frac{1}{|Q|}\int_Q w_j(x)^{1-p'_j}dx\big)^{1/p'_j}$ is
understood as $\left(\inf_Q w_j\right)^{-1}$ when $p_j=1$.
\end{definition}

In \cite{loptt}, the characterizations of the multiple weight
classes $A_{\vec{P}}$ in terms of the multilinear maximal
function ${\cal{M}}$ are proved in Theorem 3.3 and Theorem 3.7.
We restate it as follows.

\begin{lemma}[\cite{loptt}]  \label{lem.wm}
Let $\vec{P}=(p_1, \cdots, p_m)$, $1\le p_1, \cdots, p_m<\infty$ and
$1/p = 1/p_1 +\cdots +1/p_m$.

{\rm (1)} If $1<p_1, \cdots, p_m<\infty$, then $\cal{M}$ is bounded
from $L^{p_1}(w_1) \times \cdots \times {L^{p_m}(w_m)}$ to
$L^p(\nu_{\vec{w}})$ if and only if ${\vec{w}}=(w_1, \cdots, w_m)
\in{A_{\vec{P}}}$.

{\rm (2)} If $1\le p_1, \cdots, p_m<\infty$, then $\cal{M}$ is
bounded from $L^{p_1}(w_1) \times \cdots \times {L^{p_m}(w_m)}$ to
$L^{p,\infty}(\nu_{\vec{w}})$ if and only if ${\vec{w}}=(w_1,
\cdots, w_m) \in{A_{\vec{P}}}$.
\end{lemma}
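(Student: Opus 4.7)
My plan follows the strategy of Lerner et al.\ \cite{loptt}. For \emph{necessity} in both (1) and (2), I test the operator inequality against the extremal inputs $f_j = w_j^{1-p_j'}\chi_Q$ when $p_j > 1$, or $f_j = \chi_Q$ when $p_j = 1$, for a fixed cube $Q$. For every $x \in Q$ one has the trivial pointwise lower bound $\mathcal{M}(\vec{f})(x) \ge \prod_{j=1}^m |Q|^{-1}\int_Q f_j$. Substituting this into the assumed strong or weak $L^p(\nu_{\vec{w}})$ inequality and evaluating $\|f_j\|_{L^{p_j}(w_j)}$ via the identity $(w_j^{1-p_j'})^{p_j}w_j = w_j^{1-p_j'}$ yields the $A_{\vec{P}}$ condition after rearranging exponents. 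In the weak-type case one chooses $\lambda$ slightly below the lower bound so that $Q \subset \{x : \mathcal{M}(\vec{f})(x) > \lambda\}$.

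For \emph{sufficiency} in part (1), the conceptual core is a linearization: $\vec{w} \in A_{\vec{P}}$ is equivalent to the pair of linear Muckenhoupt conditions $\nu_{\vec{w}} \in A_{mp}$ together with $\sigma_j := w_j^{1-p_j'} \in A_{mp_j'}$ for every $j$ with $p_j > 1$. This equivalence is verified by applying H\"older's inequality with exponents $\alpha_j = (mp-1)p_j'/p$ (which satisfy $\sum_j 1/\alpha_j = 1$ because $\sum_j 1/p_j' = m-1/p$) to the defining averages of the $A_{\vec{P}}$ and $A_q$ classes. Once the linearization is in hand, I would bound each factor of $\mathcal{M}(\vec{f})$ by a weighted H\"older step: using $\sigma_j^{1-p_j} = w_j$,
\[
\frac{1}{|Q|}\int_Q |f_j|\, dx \leq \frac{\sigma_j(Q)^{1/p_j'}}{|Q|} \bigg(\int_Q |f_j|^{p_j}w_j\, dx\bigg)^{1/p_j}.
\]
Taking the product over $j$, using the $A_{\vec{P}}$ condition to absorb the prefactor $\prod_j \sigma_j(Q)^{1/p_j'}/|Q|^{m}$ into a multiplicative constant times $\nu_{\vec{w}}(Q)^{-1/p}$, and combining via the generalized H\"older inequality with $1/p = \sum 1/p_j$ together with the identity $\nu_{\vec{w}} = \prod_j w_j^{p/p_j}$ produces the desired estimate $\|\mathcal{M}(\vec{f})\|_{L^p(\nu_{\vec{w}})} \le C\prod_j \|f_j\|_{L^{p_j}(w_j)}$.

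For the weak-type sufficiency in part (2), a direct covering argument suffices. For fixed $\lambda > 0$, the level set $\{x: \mathcal{M}(\vec{f})(x) > \lambda\}$ is contained in a countable disjoint union $\bigcup_k Q_k$ of dyadic cubes on each of which $\prod_j |Q_k|^{-1}\int_{Q_k}|f_j| > \lambda$. Applying the same weighted H\"older manipulation as in part (1) and the $A_{\vec{P}}$ condition on each $Q_k$ gives $\nu_{\vec{w}}(Q_k) \le C\lambda^{-p}\prod_j \bigl(\int_{Q_k}|f_j|^{p_j}w_j\bigr)^{p/p_j}$. Summing over $k$ using the generalized H\"older inequality with exponents $r_j = p_j/p$ (which satisfy $\sum_j 1/r_j = 1$) converts this into $\prod_j \|f_j\|_{L^{p_j}(w_j)}^p$, yielding the $L^{p,\infty}(\nu_{\vec{w}})$ bound. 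The main technical obstacle I anticipate is the linearization step in part (1): the exponent bookkeeping between $A_{\vec{P}}$, $A_{mp}$, and $A_{mp_j'}$ is delicate, but once it is available the rest reduces cleanly to classical linear weighted maximal-function theory.
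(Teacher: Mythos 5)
This lemma is quoted in the paper from Lerner--Ombrosi--P\'erez--Torres--Trujillo-Gonz\'alez \cite{loptt} (their Theorems 3.3 and 3.7) without proof, so your sketch has to be judged on its own. Your necessity argument and your weak-type sufficiency argument are essentially the standard ones and are sound in outline, up to two routine repairs: (i) when $p_j=1$ the test function $\chi_Q$ only produces the average $w_j(Q)/|Q|$ on the right-hand side, whereas the $A_{\vec P}$ condition demands $(\essinf_Q w_j)^{-1}$; one must instead take a supremum over nonnegative $f_j$ supported in $Q$ (or concentrate $f_j$ where $w_j$ is nearly minimal), and correspondingly replace the weighted H\"older step by $\frac{1}{|Q|}\int_Q|f_j|\le(\essinf_Qw_j)^{-1}\frac{1}{|Q|}\int_Q|f_j|w_j$ in the sufficiency direction; (ii) the level set of $\mathcal M$ is not literally a disjoint union of dyadic cubes, so one needs the usual reduction to finitely many shifted dyadic grids or a Vitali-type selection before summing.

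The real gap is in the strong-type sufficiency of part (1). Your cube-wise chain --- weighted H\"older, then absorption of $\prod_j\sigma_j(Q)^{1/p_j'}/|Q|^m$ via $A_{\vec P}$ --- yields exactly
$$\prod_{j=1}^m\frac{1}{|Q|}\int_Q|f_j|\;\le\;C\,\nu_{\vec w}(Q)^{-1/p}\prod_{j=1}^m\Big(\int_Q|f_j|^{p_j}w_j\Big)^{1/p_j},$$
which is precisely the input for the weak-type covering argument and nothing more: taking the supremum over $Q\ni x$ and integrating the $p$-th power against $\nu_{\vec w}$ fails outright, since $\nu_{\vec w}(Q)^{-1}\to\infty$ as $Q$ shrinks to $x$, and there is no further ``generalized H\"older'' step that converts the localized product into the global norm product. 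Specializing to $m=1$ makes this transparent: your argument is then the classical proof of the \emph{weak} $(p,p)$ inequality for $M$ with $w\in A_p$, which famously does not give the strong bound. A complete proof must use the self-improvement of the weights --- each $\sigma_j=w_j^{1-p_j'}\in A_{mp_j'}\subset A_\infty$ satisfies a reverse H\"older inequality, whence $\vec w\in A_{\vec P/r}$ for some $r>1$ (equivalently, one runs the estimate through the weighted maximal operators $M_{\sigma_j}$ and their $L^{p_j}(\sigma_j)$-boundedness). This is also the only place where the hypothesis $p_j>1$ for every $j$ is used; your sketch never invokes it, yet the strong type is false when some $p_j=1$, which is a structural sign that the argument as written cannot close.
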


The characterization of the multiple weight classes $A_{\vec{P}}$
in terms of the Muckenhoupt weights, which will be used later, is
established by Lerner et. al. \cite[Theorem 3.6]{loptt}.

\begin{lemma}[\cite{loptt}] \label{lem.ww}
Let ${\vec{w}}=(w_1,\cdots,w_m)$, ${\vec{P}}=({p_1},\cdots,p_m)$,
$1\le {p_1},\cdots,p_m < \infty$ and $1/p=1/{p_1}+\cdots + 1/p_m$.
Then ${\vec{w}}\in {A_{\vec{P}}}$ if and only if
\begin{displaymath}
\left\{\begin{array}{ll}
w_j^{1-p'_j} \in {A_{mp'_j}}, &~~ j=1, \cdots, m, \\
\nu_{\vec{w}}  \in {A_{mp}} ,
\end{array}\right.
\end{displaymath}
where the condition $w_j^{1-p'_j} \in {A_{mp'_j}}$ in the case
$p_j=1$ is understood as $w_j^{1/m} \in {A_1}$.
\end{lemma}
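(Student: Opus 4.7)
The plan is to exploit the Hölder-duality structure intrinsic to the definition of $A_{\vec P}$. I would first record three algebraic identities that drive the whole argument. Setting $\sigma_j:=w_j^{1-p'_j}$ (when $p_j>1$), one checks directly from $1/p=\sum_j 1/p_j$ that
$$\frac{1}{mp}+\sum_{j=1}^m\frac{1}{mp'_j}=1,$$
that $\nu_{\vec w}\prod_j\sigma_j^{p/p'_j}\equiv 1$ pointwise, and the related factorization
$$\nu_{\vec w}^{1-(mp)'}=\prod_{i=1}^m\sigma_i^{c_i},\qquad c_i=\frac{p}{p'_i(mp-1)},\qquad \sum_{i=1}^m c_i=1.$$
Once these are in place, everything else is bookkeeping with Hölder's inequality.

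For the necessity direction, assuming $\vec w\in A_{\vec P}$, I would apply Hölder to the factorization above to obtain
$$\frac{1}{|Q|}\int_Q\nu_{\vec w}^{1-(mp)'}\le\prod_i\Big(\frac{1}{|Q|}\int_Q\sigma_i\Big)^{c_i}.$$
Raising to the $(mp-1)$-st power converts $c_i$ into $p/p'_i$, and multiplying by $\frac{1}{|Q|}\int_Q\nu_{\vec w}$ produces exactly the $p$-th power of the $A_{\vec P}$ quantity, giving $[\nu_{\vec w}]_{A_{mp}}\le [\vec w]_{A_{\vec P}}^{\,p}$. For $\sigma_j\in A_{mp'_j}$ I would run the analogous argument with roles swapped: write $\sigma_j^{1-(mp'_j)'}=w_j^{\beta_j}$ and decompose $w_j^{\beta_j}=\nu_{\vec w}^{a_j}\prod_{i\neq j}\sigma_i^{a_jp/p'_i}$ with $a_j=p'_j/(p(mp'_j-1))$. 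A short exponent check shows the associated Hölder exponents are conjugate, and the same template yields $[\sigma_j]_{A_{mp'_j}}\le [\vec w]_{A_{\vec P}}^{\,p'_j}$.

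For the sufficiency direction, plain Hölder only recovers the trivial lower bound $1$ (the integrand becomes identically $1$ by identity (ii)), so one must invoke the $A_\infty$ content of the hypotheses. Since $A_q\subset A_\infty$ for every $q$, both $\nu_{\vec w}$ and each $\sigma_j$ lie in $A_\infty$, so the logarithmic-mean characterization of $A_\infty$ (Hru\v s\v cev--Fujii) supplies a constant $C$ with
$$\frac{1}{|Q|}\int_Q u\le C\,\exp\!\Big(\frac{1}{|Q|}\int_Q\log u\Big)$$
for $u=\nu_{\vec w}$ and $u=\sigma_j$. Substituting these estimates into the $A_{\vec P}$ product and using the pointwise identity $\log\nu_{\vec w}+\sum_j(p/p'_j)\log\sigma_j\equiv 0$ forces the linear combination $\frac{1}{p|Q|}\int_Q\log\nu_{\vec w}+\sum_j\frac{1}{p'_j|Q|}\int_Q\log\sigma_j$ to vanish on every $Q$, so the $A_{\vec P}$ quantity is bounded by a constant depending only on the individual Muckenhoupt constants. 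The edge cases where some $p_j=1$ I would handle by reading $\sigma_j$ and the condition $\sigma_j\in A_{mp'_j}$ as the limits prescribed in Definition \ref{def.AP}, namely $(\inf_Q w_j)^{-1}$ and $w_j^{1/m}\in A_1$, after which the above manipulations go through with routine modifications.

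I expect the sufficiency direction to be the main obstacle: a naive Hölder argument is insufficient, and one genuinely needs the reverse-Jensen property of $A_\infty$ weights to upgrade the individual Muckenhoupt bounds into the joint multiple-weight bound. The necessity direction, although algebraically dense, proceeds mechanically once the correct decomposition $\nu_{\vec w}^{1-(mp)'}=\prod\sigma_i^{c_i}$ is spotted.
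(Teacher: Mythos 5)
This lemma is quoted from \cite{loptt} and the present paper gives no proof of its own; your argument --- generalized H\"older's inequality driven by the exponent identities $\frac{1}{mp}+\sum_j\frac{1}{mp'_j}=1$ and $\nu_{\vec w}^{1/p}\prod_j\sigma_j^{1/p'_j}\equiv 1$ for the necessity, and the reverse-Jensen ($A_\infty$) characterization applied to that same pointwise identity for the sufficiency --- is precisely the proof of Theorem 3.6 in \cite{loptt}, and your exponent bookkeeping ($\sum_i c_i=1$, $a_j(mp'_j-1)=p'_j/p$, the bounds $[\nu_{\vec w}]_{A_{mp}}\le[\vec w]_{A_{\vec P}}^{\,p}$ and $[\sigma_j]_{A_{mp'_j}}\le[\vec w]_{A_{\vec P}}^{\,p'_j}$) checks out. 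The only place where details remain to be written is the endpoint $p_j=1$ (and $mp=1$), which, as you note, is handled by the limiting conventions of Definition \ref{def.AP}.
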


The following boundedness of ${\cal{M}}_r$ is contained in the proof
of Theorem 3.18 of \cite{loptt}.

\begin{lemma}   \label{lem.mrw}
Let $\vec{P}=(p_1,\cdots,p_m)$, $1<p_1,\cdots,p_m<\infty$ and
$1/p=1/p_1+\cdots+1/p_m$. If ${\vec{w}} \in {A_{\vec{P}}}$, then
there exists a constant $r>1$ such that ${\cal{M}}_r$ is bounded
from $L^{p_1}(w_1) \times \cdots \times L^{p_m}(w_m)$ to
$L^p(\nu_{\vec{w}})$.
\end{lemma}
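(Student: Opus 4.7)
The plan is to reduce ${\cal{M}}_r$ to the ordinary multilinear maximal function ${\cal{M}}$ by a rescaling, then invoke Lemma \ref{lem.wm}. The missing ingredient is the openness of the multiple Muckenhoupt class $A_{\vec{P}}$ in the exponent vector $\vec{P}$, which I would extract from Lemma \ref{lem.ww}.

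The first step is the pointwise identity
$${\cal{M}}_r(\vec{f})(x) = {\cal{M}}\bigl(|f_1|^r,\dots,|f_m|^r\bigr)(x)^{1/r},$$
which is immediate from the definitions. Writing $\tilde{p}_j=p_j/r$ and $\tilde{p}=p/r$, one has $1/\tilde{p}=\sum_{j=1}^m 1/\tilde{p}_j$ together with $\bigl\||f_j|^r\bigr\|_{L^{\tilde{p}_j}(w_j)}=\|f_j\|_{L^{p_j}(w_j)}^{r}$. Raising the target bound to the $p$-th power, it becomes equivalent to the boundedness ${\cal{M}}:L^{\tilde{p}_1}(w_1)\times\cdots\times L^{\tilde{p}_m}(w_m)\to L^{\tilde{p}}(\nu_{\vec{w}})$. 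Provided $1<r<\min_j p_j$ so that each $\tilde{p}_j>1$, Lemma \ref{lem.wm}(1) reduces the task to producing some $r>1$ for which $\vec{w}\in A_{\vec{P}/r}$, where $\vec{P}/r:=(p_1/r,\dots,p_m/r)$.

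The heart of the proof is thus the openness statement: if $\vec{w}\in A_{\vec{P}}$ with all $p_j>1$, then $\vec{w}\in A_{\vec{P}/r}$ for some $r>1$. I would argue through Lemma \ref{lem.ww}: applied to $\vec{P}$ it yields $\nu_{\vec{w}}\in A_{mp}$ and $\sigma_j:=w_j^{1-p_j'}\in A_{mp_j'}$ for every $j$. Classical openness of scalar Muckenhoupt classes, equivalently the reverse H\"older inequality for $A_\infty$ weights, then produces $\eta>0$ and $s>1$ such that $\nu_{\vec{w}}\in A_{mp-\eta}$ and each $\sigma_j$ satisfies a reverse H\"older inequality with exponent $s$. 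Writing
$$w_j^{1-(p_j/r)'}=\sigma_j^{\alpha_j(r)},\qquad \alpha_j(r):=\frac{r(p_j-1)}{p_j-r}\longrightarrow 1^+\ \text{as}\ r\to 1^+,$$
for $r>1$ close enough to $1$ one has $\alpha_j(r)\le s$, and the reverse H\"older inequality together with the inequalities $mp/r<mp$ and $m(p_j/r)'>mp_j'$ places $\nu_{\vec{w}}$ into $A_{mp/r}$ and each $\sigma_j^{\alpha_j(r)}$ into $A_{m(p_j/r)'}$. Reading Lemma \ref{lem.ww} in reverse then gives $\vec{w}\in A_{\vec{P}/r}$, which closes the reduction.

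The main technical obstacle is precisely the last maneuver: simultaneous stability of the scalar $A_q$ condition under small perturbations of both the exponent $q$ and the power $\alpha_j(r)$ to which the weight is raised. This is handled by a standard application of reverse H\"older combined with openness of $A_q$, and is essentially the bookkeeping carried out inside the proof of Theorem 3.18 of Lerner et al.\ \cite{loptt}, from which the present lemma is extracted.
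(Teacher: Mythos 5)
Your argument is correct and is essentially the one the paper relies on: the paper offers no proof of this lemma, merely noting that it is contained in the proof of Theorem 3.18 of Lerner et al.\ \cite{loptt}, and your reduction of ${\cal{M}}_r$ to ${\cal{M}}$ by rescaling, followed by the openness of $A_{\vec{P}}$ obtained from Lemma \ref{lem.ww} together with reverse H\"older and the openness of the scalar $A_q$ classes, is exactly that argument. The one detail left implicit is that placing $\sigma_j^{\alpha_j(r)}$ in $A_{mp_j'}$ needs the reverse H\"older inequality for the dual weight $\sigma_j^{1-(mp_j')'}$ as well as for $\sigma_j$ itself, but this is precisely the standard bookkeeping you acknowledge.
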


Moreover, the maximal function ${\cal{M}}_{L(\log{L})}$ satisfies
the following weak-type estimates obtained in \cite[Theorem 4.1]{pptt},
which will be used later.

\begin{lemma}[\cite{pptt}]   \label{lem.m-llogl}
Let ${\vec{w}} \in {A_{(1,\cdots,1)}}$. Then there exists a constant
$C>0$ such that
$$\nu_{\vec{w}}\big(\big\{ x\in \rn:
{\cal{M}}_{L(\log{L})}(\vec{f})(x)>t^m\big\}\big) \le {C}
\prod_{j=1}^m \left(\int_{\rn}
\Phi^{(m)}\bigg(\frac{|f_j(x)|}{t}\bigg)w_j(x)dx \right)^{1/m}.
$$
\end{lemma}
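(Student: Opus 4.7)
The plan is to combine a Calder\'on-Zygmund stopping-time argument on a dyadic version of $\mathcal{M}_{L(\log L)}$ with a H\"older-type splitting of $\nu_{\vec w}$, modeled on Fefferman's classical weak $L(\log L)$ endpoint for the Hardy-Littlewood maximal operator.

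First, by homogeneity (replace each $f_j$ by $tf_j$ and use $\mathcal{M}_{L(\log L)}(t\vec f) = t^m \mathcal{M}_{L(\log L)}(\vec f)$), I may reduce to $t=1$. By the standard three-grid trick it suffices to prove the estimate for the dyadic analogue $\mathcal{M}^d_{L(\log L)}$ attached to a fixed dyadic system. Let $\Omega = \{x:\mathcal{M}^d_{L(\log L)}(\vec f)(x)>1\}$, assumed nonempty, and let $\{Q_k\}$ be the pairwise disjoint family of maximal dyadic cubes satisfying $\prod_{j=1}^m \|f_j\|_{L(\log L),Q_k} > 1$, so $\Omega = \bigsqcup_k Q_k$.

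By Lemma \ref{lem.ww}, $\vec w\in A_{(1,\ldots,1)}$ gives $w_j^{1/m}\in A_1$ for every $j$ and $\nu_{\vec w} = \prod_j w_j^{1/m} \in A_1$. Applying H\"older's inequality pointwise to the integrand $\prod_j w_j^{1/m}$ with $m$ equal exponents produces $\nu_{\vec w}(Q_k)\le \prod_j w_j(Q_k)^{1/m}$, and the generalized H\"older inequality for sums then gives
$$\nu_{\vec w}(\Omega) = \sum_k \nu_{\vec w}(Q_k) \le \prod_{j=1}^m\Big(\sum_k w_j(Q_k)\Big)^{1/m}.$$

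The remaining task is to show, for each fixed $j$, that $\sum_k w_j(Q_k) \le C\int_{\rn}\Phi^{(m)}(|f_j(x)|)\,w_j(x)\,dx$. For this, I choose scaling parameters $\lambda_{k,i}>0$ with $\prod_i \lambda_{k,i}=1$ and $\lambda_{k,i}<\|f_i\|_{L(\log L),Q_k}$ (possible by the stopping condition), so that the Orlicz definition yields $|Q_k| < \int_{Q_k}\Phi(|f_i|/\lambda_{k,i})\,dx$ for every $i$. The $A_1$ property of $w_j^{1/m}$ together with the attendant reverse H\"older inequality controls $w_j(Q_k)$ by $|Q_k|\inf_{Q_k}w_j$ up to a constant, and an $m$-fold iteration of Orlicz-Jensen inequalities then converts the symmetric product information $\prod_i \int_{Q_k}\Phi(|f_i|/\lambda_{k,i})$ into an index-specific bound involving $\Phi^{(m)}(|f_j|)$; the extra $m-1$ compositions of $\Phi$ appear precisely as the price of redistributing the Orlicz mass from the indices $i\neq j$ onto the fixed index $j$. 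Summing over the disjoint $Q_k$ and absorbing via the $A_1$-bound on $w_j^{1/m}$ then delivers the required control by $\int\Phi^{(m)}(|f_j|)\,w_j\,dx$.

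The principal obstacle is this final distributional step: the Calder\'on-Zygmund selection criterion is symmetric in the $f_i$'s, whereas the target bound is one-sided in $f_j$. Closing the argument requires a careful balancing of the parameters $\lambda_{k,i}$ and an iterative use of Orlicz estimates so that $\Phi$ is upgraded to $\Phi^{(m)}$ in the right-hand side, while the weight $\nu_{\vec w}$ is correctly split into the per-index factors $w_j$ via the reverse H\"older inequality for $A_1$ weights.
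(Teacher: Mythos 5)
First, a remark on provenance: the paper does not prove this lemma at all --- it is quoted from \cite[Theorem 4.1]{pptt}, whose proof follows the scheme of \cite[Theorem 3.3]{loptt} --- so your proposal is compared against that argument.

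Your reductions (homogeneity to $t=1$, the shifted dyadic grids, the maximal stopping cubes $Q_k$ with $\prod_j\|f_j\|_{L(\log L),Q_k}>1$, and the pointwise H\"older bound $\nu_{\vec w}(Q_k)\le\prod_j w_j(Q_k)^{1/m}$) are sound. The argument breaks at the very next step: applying H\"older for sums at this stage reduces the problem to the per-index inequality $\sum_k w_j(Q_k)=w_j(\Omega)\le C\int_{\rn}\Phi^{(m)}(|f_j|)w_j\,dx$ for each fixed $j$, and that inequality is \emph{false} --- it decouples the indices too early and discards the product structure that makes the lemma true. Concretely, take $n=1$, $m=2$, $w_1=w_2\equiv 1$, $f_1=N\chi_{[0,1]}$, $f_2=\chi_{[0,1]}$. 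Since $\|f_i\|_{L(\log L),Q}\ge\frac{1}{|Q|}\int_Q|f_i|$, every interval $Q=[-R,R]$ with $R<\sqrt{N}/2$ satisfies the stopping condition, so $|\Omega|\ge c\sqrt{N}$, whereas $\int\Phi^{(2)}(|f_2|)\,dx=\Phi^{(2)}(1)=1$. Thus no constant works for $j=2$. (The lemma itself survives because its right-hand side is the geometric mean of the two integrals, and the $j=1$ factor is of size $(N(\log N)^2)^{1/2}$.) No balancing of your parameters $\lambda_{k,i}$ can repair this, since the target of that final step is already a false statement.

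A secondary error compounds this: you invoke the $A_1$ property of $w_j^{1/m}$ plus reverse H\"older to claim $w_j(Q_k)\le C|Q_k|\inf_{Q_k}w_j$. That assertion is equivalent to $w_j\in A_1$, which is strictly stronger than $w_j^{1/m}\in A_1$ when $m\ge 2$: the reverse H\"older gain for an $A_1$ weight is only an exponent $1+\epsilon$ depending on its $A_1$ constant, never the full power $m$ (e.g.\ $u=|x|^{-\alpha}\in A_1$ for $0<\alpha<n$, yet $u^2$ fails to be locally integrable once $\alpha\ge n/2$). The correct order of operations, as in \cite{loptt} and \cite{pptt}, is to keep the indices coupled on each cube: use the stopping condition together with the $A_{(1,\dots,1)}$ condition in its product form, $\big(\frac{1}{|Q|}\int_Q\nu_{\vec w}\big)^{m}\prod_j(\inf_Q w_j)^{-1}\le C$, and an Orlicz--H\"older estimate to derive the local bound $\nu_{\vec w}(Q_k)\le C\prod_j\big(\int_{Q_k}\Phi^{(m)}(|f_j|)w_j\,dx\big)^{1/m}$, and only \emph{then} apply H\"older for sums, so that each resulting factor collapses to $\sum_k\int_{Q_k}\Phi^{(m)}(|f_j|)w_j\,dx\le\int_{\rn}\Phi^{(m)}(|f_j|)w_j\,dx$ by disjointness of the $Q_k$.
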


\subsection{Kolmogorov's inequality}

We will also need the following Kolmogorov's inequality (see
\cite[page 485]{gar-rub} or \cite[(2.16)]{loptt}).

\begin{lemma}   \label{lem.kolm}
Let $0<p<q<\infty$, then there is a positive constant $C=C_{p,q}$
such that for any measurable function $f$,
$$|Q|^{-1/p}\|f\|_{L^p(Q)}\le {C}|Q|^{-1/q}\|f\|_{L^{q,\infty}(Q)}.
$$
\end{lemma}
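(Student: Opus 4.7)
The plan is to prove the Kolmogorov inequality by the standard layer-cake/distribution-function argument, splitting the integral at an optimally chosen height. Write $m_f(\lambda):=|\{x\in Q:|f(x)|>\lambda\}|$. The starting point is the identity
\begin{equation*}
\|f\|_{L^p(Q)}^p = p\int_0^\infty \lambda^{p-1} m_f(\lambda)\, d\lambda,
\end{equation*}
together with the two a priori bounds $m_f(\lambda)\le |Q|$ (trivial) and $m_f(\lambda)\le \lambda^{-q}\|f\|_{L^{q,\infty}(Q)}^q$ (definition of weak $L^q$). The first is sharp for small $\lambda$ and the second is sharp for large $\lambda$, so I split the integral at a threshold $t>0$ to be chosen.

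Applying the trivial bound on $[0,t]$ and the weak-type bound on $[t,\infty)$, and using $p<q$ so that the tail integral converges, gives
\begin{equation*}
\|f\|_{L^p(Q)}^p \ \le\ t^p|Q| \ +\ \frac{p}{q-p}\, t^{\,p-q}\,\|f\|_{L^{q,\infty}(Q)}^q .
\end{equation*}
The right-hand side is minimized at $t=\|f\|_{L^{q,\infty}(Q)}\,|Q|^{-1/q}$; plugging this in collapses both terms into a common factor $|Q|^{1-p/q}\|f\|_{L^{q,\infty}(Q)}^p$, with constant $1+\frac{p}{q-p}=\frac{q}{q-p}$. Taking $p$-th roots yields
\begin{equation*}
\|f\|_{L^p(Q)} \ \le\ \Bigl(\tfrac{q}{q-p}\Bigr)^{1/p} |Q|^{1/p-1/q}\,\|f\|_{L^{q,\infty}(Q)},
\end{equation*}
which is precisely the desired inequality after dividing by $|Q|^{1/p}$, with $C_{p,q}=(q/(q-p))^{1/p}$.

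There is essentially no obstacle here; the only care needed is (i) verifying that the dominated integral is finite, which requires $p<q$ so the tail $\int_t^\infty \lambda^{p-q-1}\,d\lambda$ converges, and (ii) handling the degenerate case $\|f\|_{L^{q,\infty}(Q)}=0$ or $\infty$ trivially. If one prefers to avoid calculus optimization, one may simply take $t=\|f\|_{L^{q,\infty}(Q)}|Q|^{-1/q}$ by inspection (the value that equates the two bounds on $m_f$) and verify the stated constant directly.
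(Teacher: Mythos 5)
Your proof is correct and is the standard layer-cake argument for Kolmogorov's inequality; the computation of the optimal threshold and the constant $C_{p,q}=(q/(q-p))^{1/p}$ checks out. The paper does not prove this lemma but simply cites it (Garc\'ia-Cuerva--Rubio de Francia, p.~485, and (2.16) in Lerner et al.), and the argument you give is precisely the one found in those references.
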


\section{Proof of Theorems \ref{thm.1.1} and \ref{thm.1.2}}  \label{proof}

Before proving the results, we first remark that the specific
value of $\tau \in (0,1)$ in Definition \ref{def.1.1}
is immaterial. From now on, we may assume the
constant $\tau$ appearing in Definition \ref{def.1.1} equals to $1/2$ for
simplicity. The argument with trivial modifications is also applicable to
any specific value of $\tau \in (0,1)$.

We now introduce some notation for convenience. For positive
integers $m$ and $j$ with $1\le{j}\le
{m}$, we denote by $C_j^m$ the family of all finite subsets $\sigma
= \{\sigma(1), \cdots, \sigma(j)\}$ of $\{1,\cdots,m\}$ of $j$
different elements, where we always take $\sigma(k)<\sigma(j)$ if
$k<j$. For any $\sigma\in {C_j^m}$, we write
$\sigma'=\{1,\cdots,m\}\setminus \sigma$ and $C_0^m=\emptyset$.

Given an $m$-tuple $\vec{b}=(b_1,\cdots,b_m) \in{BMO^m}$ and $\sigma
= \{\sigma(1), \cdots, \sigma(j)\} \in {C_j^m}$ with $\sigma'
=\{\sigma'(1), \cdots, \sigma'(m-j)\}$, we denote by
$\vec{b}_{\sigma}=(b_{\sigma(1)}, \cdots, b_{\sigma(j)})$ and
$\vec{b}_{\sigma'}=(b_{\sigma'(1)}, \cdots, b_{\sigma'(m-j)})$.

For $\sigma \in {C_j^m}$ and $\vec{b}_{\sigma}=(b_{\sigma(1)},
\cdots, b_{\sigma(j)}) \in {BMO^j}$, similar to (\ref{equ.1.2}), we
can define the following iterated commutator
$$T_{\Pi\vec{b}_\sigma}(\vec{f})(x)= [b_{\sigma(1)},[b_{\sigma(2)},
... [b_{\sigma(j-1)}, [b_{\sigma(j)},T]_{\sigma(j)}]_{\sigma(j-1)}
...]_{\sigma(2)},]_{\sigma(1)}(\vec{f})(x).
$$
This is, formally
$$T_{\Pi\vec{b}_{\sigma}}(\vec{f})(x) =\int_{(\rn)^m}
\bigg(\prod_{i=1}^j \big(b_{\sigma(i)}(x)-b_{\sigma(i)}(y_{\sigma(i)})\big)
\bigg) K(x,\vec{y}) f_1(y_1)\cdots {f_m(y_m)} d\vec{y}.
$$

Obviously, $T_{\Pi\vec{b}_{\sigma}} =T_{\Pi\vec{b}}$ if $\sigma=
\{1,\cdots,m\}$ and $T_{\Pi\vec{b}_{\sigma}} =T_{b_j}^j$ if
$\sigma=\{j\}$.

To prove Theorems \ref{thm.1.1} and \ref{thm.1.2}, we first give the
following pointwise estimates.

\begin{proposition}  \label{prop.sharp} 
Let $T$ be an $m$-linear $\omega$-CZO with $\omega$ satisfying
(\ref{equ.ldini}) and $\vec{b} \in {BMO^m}$. Let $0<\delta
<\varepsilon$ and $0<\delta<1/m$. Then, there exists a positive
constant $C$, depending on $\delta$ and $\varepsilon$, such that
\begin{equation}   \label{equ.sharp0}
\begin{split}
M_{\delta}^{\sharp}\big(T_{\Pi\vec{b}}(\vec{f})\big)(x) &\le{C}
\bigg(\prod_{j=1}^m \|b_j\|_{BMO}\bigg)
\Big\{{\mathcal{M}_{L(\log{L})}}(\vec{f})(x)
+M_{\varepsilon}\big(T(\vec{f})\big)(x) \Big\}\\
&\quad +C\sum_{j=1}^{m-1} \sum_{\sigma \in {C_j^m}}
\bigg(\prod_{i=1}^j \|b_{\sigma(i)}\|_{BMO}\bigg)
M_{\varepsilon}\big(T_{\Pi\vec{b}_{\sigma'}}(\vec{f})\big)(x),
\end{split}
\end{equation}
for all bounded measurable functions $f_1, \cdots, f_m$ with compact support.
\end{proposition}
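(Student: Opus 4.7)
Fix $x \in \rn$ and a cube $Q \ni x$ with center $x_0$, and set $Q^* = 8\sqrt{n}\,Q$, $\lambda_j = (b_j)_{Q^*}$. Since $0 < \delta < 1$ and $\bigl||a|^\delta - |b|^\delta\bigr| \le |a - b|^\delta$, it suffices to exhibit a constant $c_Q$ (depending on $Q$) for which
$$
\Bigl(\frac{1}{|Q|}\int_Q \bigl|T_{\Pi\vec{b}}(\vec{f})(z) - c_Q\bigr|^\delta dz\Bigr)^{1/\delta}
$$
is dominated by the right-hand side of (\ref{equ.sharp0}). The first move is to apply the identity $(b_j(z) - b_j(y_j)) = (b_j(z) - \lambda_j) - (b_j(y_j) - \lambda_j)$ in every kernel factor of $T_{\Pi\vec{b}}(\vec{f})(z)$, and then to invert the analogous $2^{|\sigma'|}$-fold expansions of $T_{\Pi\vec{b}_{\sigma'}}(\vec{f})(z)$ so as to rearrange the result as
$$
T_{\Pi\vec{b}}(\vec{f})(z) = \sum_{\emptyset \ne \sigma \subseteq \{1,\ldots,m\}} c_\sigma \prod_{j \in \sigma}(b_j(z) - \lambda_j)\,T_{\Pi\vec{b}_{\sigma'}}(\vec{f})(z) + T(\vec{g})(z),
$$
where $\vec{g} = ((b_1 - \lambda_1)f_1, \ldots, (b_m - \lambda_m)f_m)$, $T_{\Pi\vec{b}_\emptyset} := T$, and the $c_\sigma$ are combinatorial constants (for $m = 2$ the identity is a short direct check, and the general case follows by induction on $m$).

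Each term in the main sum with $\sigma \ne \emptyset$ is controlled directly. H\"older's inequality applied to the $\delta$-average over $Q$ with exponents $\delta r_1, \ldots, \delta r_{|\sigma|}, \varepsilon$ whose reciprocals sum to $1/\delta$ separates the factors $(b_j - \lambda_j)$ for $j \in \sigma$ from $T_{\Pi\vec{b}_{\sigma'}}(\vec{f})$. Each BMO average is bounded by $C \|b_j\|_{BMO}$ via (\ref{equ.bmo3}) (using $\lambda_j = (b_j)_{Q^*}$), and the remaining $L^\varepsilon$ average is bounded by $M_\varepsilon(T_{\Pi\vec{b}_{\sigma'}}(\vec{f}))(x)$. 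This reproduces exactly the terms $C \prod_{j \in \sigma} \|b_j\|_{BMO} M_\varepsilon(T_{\Pi\vec{b}_{\sigma'}}(\vec{f}))(x)$ on the right-hand side of (\ref{equ.sharp0}), with $\sigma = \{1, \ldots, m\}$ (so $\sigma' = \emptyset$) yielding the $M_\varepsilon(T(\vec{f}))$ contribution.

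The remainder $T(\vec{g})(z) - c_Q$ is treated by splitting $f_j = f_j^0 + f_j^\infty$ with $f_j^0 = f_j \chi_{Q^*}$, expanding $T(\vec{g})(z) = \sum_{\alpha \in \{0,\infty\}^m} T(\vec{g}^{\,\alpha})(z)$, and setting $c_Q := \sum_{\alpha \ne (0,\ldots,0)} T(\vec{g}^{\,\alpha})(x_0)$. The all-interior piece $T(\vec{g}^{\,(0,\ldots,0)})$ is bounded in $L^\delta(Q, dz/|Q|)$ by Kolmogorov's inequality (Lemma \ref{lem.kolm}, valid since $\delta < 1/m$) combined with the weak $(L^1 \times \cdots \times L^1, L^{1/m, \infty})$ estimate from Theorem \ref{thm.A} and the Orlicz--H\"older bound (\ref{equ.bmo1}), producing $C (\prod_j \|b_j\|_{BMO}) {\cal{M}}_{L(\log{L})}(\vec{f})(x)$. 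For each exterior piece the difference $T(\vec{g}^{\,\alpha})(z) - T(\vec{g}^{\,\alpha})(x_0)$ is estimated via the kernel smoothness (\ref{equ.k2}); decomposing each exterior integration variable over dyadic annuli $2^k Q^* \setminus 2^{k-1} Q^*$ picks up a factor $\omega(2^{-k})$ from the kernel together with factors $(1+k) \|b_j\|_{BMO} \|f_j\|_{L(\log{L}), 2^k Q^*}$ per BMO slot by (\ref{equ.bmo1})--(\ref{equ.bmo3}), and summation over $k \ge 1$ yields the series $\sum_k k^m \omega(2^{-k})$, finite by (\ref{equ.ldini})--(\ref{equ.ldini-e}).

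The two principal difficulties are the combinatorial inversion in the opening step (arranging the $2^m$-expansion so that each intermediate $\sigma$ reproduces precisely $T_{\Pi\vec{b}_{\sigma'}}(\vec{f})$ rather than $T$ applied to $(b-\lambda)$-modified inputs, which requires systematically solving the analogous expansions for the lower-order commutators), and the exterior annular estimate: the $m$-fold product of BMO averages on the $k$-th shell forces the logarithmic growth $(1+k)^m$, and it is precisely this that dictates the $(1 + \log(1/t))^m$-weighted Dini condition (\ref{equ.ldini}) in place of the plain $Dini(1)$ sufficient for Theorems \ref{thm.A}--\ref{thm.B}.
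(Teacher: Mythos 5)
Your argument is essentially the paper's own: expand $T_{\Pi\vec{b}}$ around the averages $\lambda_j=(b_j)_{Q^*}$ so that the lower-order commutators $T_{\Pi\vec{b}_{\sigma'}}(\vec{f})$ appear multiplied by $\prod_{j\in\sigma}(b_j-\lambda_j)$ (for $m=2$ the paper does exactly this, with $c_{\{1\}}=c_{\{2\}}=1$, $c_{\{1,2\}}=-1$, the same inversion you describe), control those blocks by H\"older together with (\ref{equ.bmo3}) and $M_{\varepsilon}$, and treat $T(\vec{g})$ by the $f_j^0/f_j^\infty$ split with Kolmogorov plus Theorem \ref{thm.A} and (\ref{equ.bmo1}) for the all-interior block and the dyadic-annulus kernel-smoothness estimate with (\ref{equ.sharp6}) and (\ref{equ.ldini-e}) for the remaining pieces. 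The only discrepancies are cosmetic (the paper centers $Q$ at $x$, records the sign $(-1)^m$ on $T(\vec{g})$, and writes the $m=2$ case explicitly before asserting the general case), so your proposal is correct and follows the same route.
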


\begin{proof}
For the sake of clarity and simplicity, we prove only the case
$m=2$. The general case can be obtained by a similar argument 
with only trivial modifications.

Fix $b_1, b_2 \in {BMO(\rn)}$, for any constants $\lambda_1$ 
and $\lambda_2$, since
\begin{equation}   \label{equ.sharp-T}
T_{\Pi\vec{b}}(f_1,f_2) = b_1b_2T(f_1,f_2)-b_2T(b_1f_1,f_2) 
 -b_1T(f_1,b_2f_2)+T(b_1f_1,b_2f_2),
\end{equation}
we can rewrite it as follows
\begin{align*}
T_{\Pi\vec{b}}(f_1,f_2)
&= (b_1-\lambda_1)(b_2-\lambda_2)T(f_1,f_2)
 -(b_2-\lambda_2)T\big((b_1-\lambda_1)f_1,f_2\big)\\
  &\quad -(b_1-\lambda_1) T\big(f_1,(b_2-\lambda_2)f_2\big)
   +T\big((b_1-\lambda_1)f_1,(b_2-\lambda_2)f_2\big)\\
&= - (b_1-\lambda_1)(b_2-\lambda_2)T(f_1,f_2)
 +(b_2-\lambda_2)T_{b_1-\lambda_1}^1(f_1,f_2)\\
   &\quad +(b_1(x)-\lambda_1) T_{b_2-\lambda_2}^2(f_1,f_2)
    +T\big((b_1-\lambda_1)f_1,(b_2-\lambda_2)f_2\big),
\end{align*}
where
$$T_{b_1-\lambda_1}^1(f_1,f_2)(x) =(b_1(x)-\lambda_1)T(f_1,f_2)(x)
  -T\big((b_1-\lambda_1)f_1,f_2\big)(x)
$$
and
$$T_{b_2-\lambda_2}^2(f_1,f_2)(x) =(b_2(x)-\lambda_2)T(f_1,f_2)(x)
  -T\big(f_1,(b_2-\lambda_2)f_2\big)(x).
$$

Now we fix $x\in\rn$. For any cube $Q$ centered at $x$, set
$Q^{*}=8\sqrt{n}Q$ and let $\lambda_j=(b_j)_{Q^{*}}$ be the average
of $b_j$ on $Q^{*}$, $j=1,2$. Since $0<\delta<1/2$, then, for any
real number $c$,
\begin{equation}    \label{equ.sharp1}
\begin{split}
\bigg(\frac1{|Q|} \int_Q\Big| |T_{\Pi\vec{b}}(f_1,f_2)(z)|^{\delta}
 -|c|^{\delta}\Big|dz\bigg)^{1/{\delta}} &\le\bigg(\frac1{|Q|}
  \int_Q\big| T_{\Pi\vec{b}}(f_1,f_2)(z)
   -c\big|^{\delta} dz\bigg)^{1/{\delta}}\\
&\le {C}(I_1 +I_2 +I_3 +I_4),
\end{split}
\end{equation}
where
\begin{equation*}
\begin{split}
&I_1=\bigg(\frac1{|Q|}\int_Q \big| (b_1(z)-\lambda_1)
 (b_2(z)-\lambda_2)T(f_1,f_2)(z)\big|^{\delta}dz\bigg)^{1/{\delta}},\\
&I_2=\bigg(\frac1{|Q|}\int_Q \big|(b_2(z)-\lambda_2)
 T_{b_1-\lambda_1}^1(f_1,f_2)(z)\big|^{\delta}dz\bigg)^{1/{\delta}},\\
&I_3=\bigg(\frac1{|Q|}\int_Q \big| (b_1(z)-\lambda_1)
 T_{b_2-\lambda_2}^2(f_1,f_2)(z)\big|^{\delta}dz\bigg)^{1/{\delta}},\\
&I_4=\bigg(\frac1{|Q|}\int_Q \big| T\big((b_1-\lambda_1)f_1,
 (b_2-\lambda_2)f_2\big)(z)-c\big|^{\delta} dz\bigg)^{1/{\delta}}.
\end{split}
\end{equation*}

For any $1<r_1,r_2,r_3<\infty$ with $1/r_1+1/r_2+1/r_3=1$ and
$r_3<{\varepsilon}/{\delta}$, it follows from H\"older's inequality
and (\ref{equ.bmo3}) that
\begin{equation}    \label{equ.sharp2}
\begin{split}
I_1 &\le \bigg(\frac1{|Q|}\int_Q\big|b_1(z)-(b_1)_{Q^*}
 \big|^{r_1\delta} dz\bigg)^{\frac{1}{r_1\delta}}
  \bigg(\frac1{|Q|}\int_Q\big|b_2(z)-(b_2)_{Q^*}\big|^{r_2\delta}
dz\bigg)^{\frac1{r_2\delta}} \\
& \qquad \times \bigg(\frac1{|Q|}\int_Q\big|T(f_1,f_2)(z)
  \big|^{r_3\delta} dz\bigg)^{\frac1{r_3\delta}}\\
&\le{C}\|b_1\|_{BMO}\|b_2\|_{BMO}
  M_{r_3\delta}\big(T(f_1,f_2))(x)\\
&\le{C}\|b_1\|_{BMO}\|b_2\|_{BMO}M_{\varepsilon}\big(T(f_1,f_2))(x).
\end{split}
\end{equation}

For the second term $I_2$, let $1<t_1,t_2<\infty$ with $1=1/t_1
+1/t_2$ and $t_2<{\varepsilon}/{\delta}$. Then H\"older's
inequality together with (\ref{equ.bmo3}) gives
\begin{equation}     \label{equ.sharp3}
\begin{split}
I_2&\le \bigg(\frac1{|Q|} \int_Q|b_2(z)-(b_2)_{Q^{*}}|^{t_1\delta}
 dz\bigg)^{\frac1{t_1\delta}}
  \bigg(\frac1{|Q|}\int_Q\big|T^1_{b_1-\lambda_1}(f_1,f_2)(z)
   \big|^{t_2\delta}dz\bigg)^{\frac11{t_2\delta}}\\
&\le {C} \|b_2\|_{BMO} M_{t_2\delta}
 \big(T^1_{b_1-\lambda_1}(f_1,f_2)\big)(x)\\
&\le {C} \|b_2\|_{BMO} M_{\varepsilon}
 \big(T^1_{b_1-\lambda_1}(f_1,f_2)\big)(x)\\
&\le {C} \|b_2\|_{BMO} M_{\varepsilon}
 \big(T^1_{b_1}(f_1,f_2)\big)(x).
\end{split}
\end{equation}

Similarly, we have
\begin{equation}     \label{equ.sharp4}
\begin{split}
I_3 \le {C} \|b_1\|_{BMO} M_{\varepsilon}
\big(T^2_{b_2}(f_1,f_2)\big)(x).
\end{split}
\end{equation}

Now, we are in the position to consider the last term $I_4$. For each
$j=1,2$, we decompose $f_j$ as $f_j=f_j^0 + f_j^{\infty}$, where
$f_j^0=f_j\chi_{Q^*}$ and $f_j^{\infty} =f_j-f_j^0$. Let
$c=\sum_{j=1}^3 c_j$, where
\begin{align*}
&c_1=T((b_1-\lambda_1)f_1^0, (b_2-\lambda_2)f_2^{\infty})(x), \\
&c_2=T((b_1-\lambda_1)f_1^{\infty}, (b_2-\lambda_2)f_2^0)(x),\\
&c_3=T((b_1-\lambda_1)f_1^{\infty}, (b_2-\lambda_2)f_2^{\infty})(x).
\end{align*}

Then,
\begin{align*}
I_4 &\le {C}\bigg(\frac1{|Q|} \int_Q |T((b_1-\lambda_1)f_1^0,
 (b_2-\lambda_2)f_2^0)(z)|^{\delta}dz\bigg)^{1/{\delta}}\\
&\quad +{C}\bigg(\frac1{|Q|} \int_Q |T((b_1-\lambda_1)f_1^0,
 (b_2-\lambda_2)f_2^{\infty})(z)-c_1|^{\delta}dz\bigg)^{1/{\delta}}\\
&\quad +{C}\bigg(\frac1{|Q|} \int_Q |T((b_1-\lambda_1)f_1^{\infty},
 (b_2-\lambda_2)f_2^0)(z)-c_2|^{\delta}dz\bigg)^{1/{\delta}}\\
&\quad +{C}\bigg(\frac1{|Q|} \int_Q |T((b_1-\lambda_1)f_1^{\infty},
 (b_2-\lambda_2)f_2^{\infty})(z)-c_3|^{\delta}dz\bigg)^{1/{\delta}}\\
&:=I_{4,1} +I_{4,2} +I_{4,3} +I_{4,4}.
\end{align*}

We first estimate $I_{4,1}$. Applying Kolmgorov's inequality (Lemma
\ref{lem.kolm}) with $p=\delta<1/2$ and $q=1/2$, Theorem \ref{thm.A}
and (\ref{equ.bmo1}), we have
\begin{align*}
I_{4,1}&= C|Q|^{-1/{\delta}}\|T((b_1-\lambda_1)f_1^0,
 (b_2-\lambda_2)f_2^0)\|_{L^{\delta}(Q)}\\
&\le {C}|Q|^{-2}\|T((b_1-\lambda_1)f_1^0,
 (b_2-\lambda_2)f_2^0)\|_{L^{1/2,\infty}(Q)}\\
&\le {C}|Q|^{-2}\|T((b_1-\lambda_1)f_1^0,
 (b_2-\lambda_2)f_2^0)\|_{L^{1/2,\infty}(\rn)}\\
&\le {C}|Q|^{-2}\|(b_1-\lambda_1)f_1^0\|_{L^1(\rn)}
 \|(b_2-\lambda_2)f_2^0)\|_{L^1(\rn)}\\
&\le {C}\frac1{|Q|}\int_{Q^*}|b_1(z)-(b_1)_{Q^*}||f_1^0(z)|dz
 \frac1{|Q|}\int_{Q^*}|b_2(z)-(b_2)_{Q^*}||f_2^0(z)|dz\\
& \le{C}\|b_1\|_{BMO}\|b_2\|_{BMO} \|f_1\|_{L(\log{L}),Q^*}
  \|f_2\|_{L(\log{L}),Q^*}\\
&\le {C}\|b_1\|_{BMO}\|b_2\|_{BMO}
  {\mathcal{M}}_{L(\log{L})}(f_1,f_2)(x).
\end{align*}

Next, we consider the term $I_{4,2}$. For any $z\in{Q}$, we have
\begin{align*}
&|T((b_1-\lambda_1)f_1^0, (b_2-\lambda_2)f_2^{\infty})(z)-c_1|\\
&=|T((b_1-\lambda_1)f_1^0, (b_2-\lambda_2)f_2^{\infty})(z)
  -T((b_1-\lambda_1)f_1^0, (b_2-\lambda_2)f_2^{\infty})(x)|\\
&\le \int_{(\rn)^2} |K(z,y_1,y_2)-K(x,y_1,y_2)|
 |(b_1(y_1)-\lambda_1)f_1^0(y_1)|
   |(b_2(y_2)-\lambda_2)f_2^{\infty}(y_2)| dy_1dy_2\\
&\le \int_{Q^*}|(b_1(y_1)-\lambda_1)f_1^0(y_1)|
  \bigg(\int_{\rn\setminus{Q^*}} |K(z,y_1,y_2)-K(x,y_1,y_2)|
   |(b_2(y_2)-\lambda_2)f_2^{\infty}(y_2)|dy_2\bigg)dy_1
\end{align*}

Note the following fact that, for any $z\in{Q}$, $y_1\in {Q^*}$ and
$y_2\in{\mathcal{Q}}_k:={2^{k+3}\sqrt{n}Q}\setminus{2^{k+2}\sqrt{n}Q}$,
\begin{equation}      \label{equ.sharp5}
\begin{split}
|K(z,y_1,y_2)-K(x,y_1,y_2)| &\le \frac{C}{(|x-y_1|+|x-y_2|)^{2n}}
  \omega\bigg(\frac{|z-x|}{|x-y_1|+|x-y_2|}\bigg)\\
&\le {C}\frac{\omega(2^{-k})}{|2^{k+3}\sqrt{n}Q|^2},
\end{split}
\end{equation}
we get
\begin{align*}
&|T((b_1-\lambda_1)f_1^0, (b_2-\lambda_2)f_2^{\infty})(z)-c_1|\\
&\le {C}\int_{Q^*}|(b_1(y_1)-\lambda_1)f_1^0(y_1)|
  \bigg(\sum_{k=1}^{\infty} \int_{{\mathcal{Q}}_k}
  \frac{\omega(2^{-k})}{|2^{k+3}\sqrt{n}Q|^2}
   |(b_2(y_2)-\lambda_2)f_2^{\infty}(y_2)|dy_2\bigg)dy_1\\
&\le {C}\int_{Q^*}|(b_1(y_1)-\lambda_1)f_1^0(y_1)|
  \bigg(\sum_{k=1}^{\infty} \frac{\omega(2^{-k})}{|2^{k}Q^*|^2}
  \int_{2^kQ^*}|(b_2(y_2)-\lambda_2)f_2^{\infty}(y_2)|dy_2\bigg)dy_1\\
&\le {C}\sum_{k=1}^{\infty}\omega(2^{-k})
 \frac1{|2^{k}Q^*|}\int_{2^kQ^*}|(b_1(y_1)-\lambda_1)f_1(y_1)|dy_1
  \frac1{|2^{k}Q^*|}\int_{2^kQ^*}|(b_2(y_2)-\lambda_2)f_2(y_2)|dy_2.
\end{align*}

By (\ref{equ.bmo1}) and (\ref{equ.bmo2}), we have
\begin{equation}     \label{equ.sharp6}
\begin{split}
&\frac1{|2^{k}Q^*|} \int_{2^kQ^*}|(b_j(y_j)-\lambda_j)f_j(y_j)|dy_j\\
&=\frac1{|2^{k}Q^*|}\int_{2^kQ^*}|(b_j(y_j)-(b_j)_{Q^*})f_j(y_j)|dy_j\\
&\le \frac1{|2^{k}Q^*|}\int_{2^kQ^*}
  |(b_j(y_j)-(b_j)_{2^kQ^*})f_j(y_j)|dy_j\\
&\quad  +\frac{|(b_j)_{2^kQ^*}-(b_j)_{Q^*}|}{|2^{k}Q^*|}
  \int_{2^kQ^*}|f_j(y_j)|dy_j\\
& \le {C}\|b_j\|_{BMO}\|f_j\|_{L(\log{L}),2^kQ^*}
  +{C}k\|b_j\|_{BMO} \|f_j\|_{L(\log{L}),2^kQ^*}\\
& \le {C}k\|b_j\|_{BMO} \|f_j\|_{L(\log{L}),2^kQ^*}.
\end{split}
\end{equation}
Then by (\ref{equ.ldini-e}),
\begin{align*}
I_{4,2} &\le \frac{C}{|Q|} \int_Q |T((b_1-\lambda_1)f_1^0,
 (b_2-\lambda_2)f_2^{\infty})(z)-c_1| dz\\
&\le {C}\|b_1\|_{BMO}\|b_2\|_{BMO}
 \sum_{k=1}^{\infty}k^2\omega(2^{-k})
  \|f_1\|_{L(\log{L}),2^kQ^*}\|f_2\|_{L(\log{L}),2^kQ^*}\\
&\le {C}\|b_1\|_{BMO}\|b_2\|_{BMO}
 {\mathcal{M}}_{L(\log{L})}(f_1,f_2)(x).
\end{align*}

Similarly to $I_{4,2}$, we can estimate
$$I_{4,3} \le {C}\|b_1\|_{BMO}\|b_2\|_{BMO}
{\mathcal{M}}_{L(\log{L})}(f_1,f_2)(x).
$$

Finally, we consider the term $I_{4,4}$.  For any $z\in{Q}$ and
$(y_1,y_2)\in (2^{k+3}\sqrt{n} Q)^2 \setminus (2^{k+2}\sqrt{n} Q)^2$,
similar to (\ref{equ.sharp5}) we have
$$|K(z,y_1,y_2)-K(x,y_1,y_2)| \le {C}\frac{\omega(2^{-k})}{|2^{k+3}\sqrt{n}Q|^2}.
$$
This together with (\ref{equ.sharp6}) gives \begin{align*}
&|T((b_1-\lambda_1)f_1^{\infty},(b_2-\lambda_2)f_2^{\infty})(z)-c_3|\\
&\le \int_{(\rn\setminus{Q^*})^2}|K(z,y_1,y_2)-K(x,y_1,y_2)|
 \bigg(\prod_{j=1}^2|(b_j(y_j)-\lambda_j)f_j^{\infty}(y_j)|\bigg)dy_1dy_2 \\
&\le \sum_{k=1}^{\infty}\int_{(2^{k+3}\sqrt{n} Q)^2 \setminus (2^{k+2}\sqrt{n} Q)^2}
  |K(z,y_1,y_2)-K(x,y_1,y_2)|
   \bigg(\prod_{j=1}^2|(b_j(y_j)-\lambda_j)f_j^{\infty}(y_j)|\bigg)dy_1dy_2\\
&\le {C}\sum_{k=1}^{\infty}
 \int_{(2^{k+3}\sqrt{n} Q)^2} \frac{\omega(2^{-k})}{|2^{k+3}\sqrt{n}Q|^2}
   \bigg(\prod_{j=1}^2|(b_j(y_j)-\lambda_j)f_j(y_j)|\bigg)dy_1dy_2\\
&\le {C}\|b_1\|_{BMO}\|b_2\|_{BMO}
 \sum_{k=1}^{\infty}k^2\omega(2^{-k})
  \|f_1\|_{L(\log{L}),2^kQ^*}\|f_2\|_{L(\log{L}),2^kQ^*}\\
&\le {C}\|b_1\|_{BMO}\|b_2\|_{BMO}
   {\mathcal{M}}_{L(\log{L})}(f_1,f_2)(x),
\end{align*}
which concludes that
\begin{align*}
I_{4,4}&=C\bigg(\frac1{|Q|} \int_Q |T((b_1-\lambda_1)f_1^{\infty},
 (b_2-\lambda_2)f_2^{\infty})(z)-c_3|^{\delta}dz\bigg)^{1/{\delta}}\\
&\le{C}\frac1{|Q|} \int_Q |T((b_1-\lambda_1)f_1^{\infty},
 (b_2-\lambda_2)f_2^{\infty})(z)-c_3|dz\\
&\le {C}\|b_1\|_{BMO}\|b_2\|_{BMO}
   {\mathcal{M}}_{L(\log{L})}(f_1,f_2)(x).
\end{align*}
This, together with (\ref{equ.sharp1}), (\ref{equ.sharp2}),
(\ref{equ.sharp3}) and  (\ref{equ.sharp4}), gives
\begin{align*}
M^{\sharp}_{\delta}\big(T_{\Pi\vec{b}}(\vec{f})\big)(x)
 &\le{C}\|b_1\|_{BMO}\|b_2\|_{BMO}
  \Big\{{\mathcal{M}}_{L(\log{L})}(f_1,f_2)(x)
   +M_{\varepsilon}\big(T(f_1,f_2)\big)(x)\Big\}\\
&\qquad + C\|b_2\|_{BMO}
M_{\varepsilon}\big(T_{b_1}^1(f_1,f_2)\big)(x)
  + C\|b_1\|_{BMO} M_{\varepsilon}\big(T_{b_2}^2(f_1,f_2)\big)(x).
\end{align*}

The proof of Proposition \ref{equ.sharp0} is now complete.
\end{proof}

\begin{remark}   \label{remark.sharp}
We can also obtain analogous estimates to (\ref{equ.sharp0})
for iterated commutators involving $j<m$ functions in $BMO$.
More precisely, for $\sigma=\{\sigma(1), \cdots, \sigma(j)\}$,
we have
\begin{equation}   \label{equ.sharp0.1}
\begin{split}
M_{\delta}^{\sharp}\big(T_{\Pi{\vec{b}}_{\sigma}}(\vec{f})\big)(x)
&\le{C} \bigg(\prod_{k=1}^{j} \|b_k\|_{BMO}\bigg)
\Big\{{\mathcal{M}_{L(\log{L})}}(\vec{f})(x)
+M_{\varepsilon}\big(T(\vec{f})\big)(x) \Big\}\\
&\quad +C\sum_{k=1}^{j-1} \sum_{\eta \in {C_k^m}}
\bigg(\prod_{i=1}^k \|b_{\eta(i)}\|_{BMO}\bigg)
M_{\varepsilon}\big(T_{\Pi\vec{b}_{\eta'}}(\vec{f})\big)(x).
\end{split}
\end{equation}
\end{remark}

From the pointwise estimates obtained above, we can get the
following strong and weak type estimates for the iterated commutator
$T_{\Pi\vec{b}}$.

\begin{proposition}  \label{prop.2}  
Let $T$ be an $m$-linear $\omega$-CZO with $\omega$ satisfying
(\ref{equ.ldini}) and $\vec{b}\in {BMO^m}$. Suppose that
$0<p<\infty$ and $w\in {A_{\infty}}$. Then, there exist positive
constant $C_w$ (depending on the $A_{\infty}$ constant of $w$, but
independent of $\vec{b}$) and $C_{w,\vec{b}}$ (depending on $w$ and
$\vec{b}$) such that
\begin{equation}  \label{equ.prop-1}
\int_{\rn}\big|T_{\Pi\vec{b}}(\vec{f})(x)\big|^pw(x)dx \le {C_w}
\bigg(\prod_{j=1}^m\|b_j\|_{BMO}\bigg)^p
 \int_{\rn} \big[\mathcal{M}_{L(\log{L})}(\vec{f})(x)\big]^p w(x)dx
\end{equation}
and
\begin{equation}  \label{equ.prop-2}
\begin{split}
\sup_{t>0} & \frac1{\Phi^{(m)}(1/t)} w\big(\big\{y\in \rn:
\big|T_{\Pi\vec{b}}(\vec{f})(y)\big|>t^m\big\}\big) \\
 & \le {C_{w,\vec{b}}}  \sup_{t>0}\frac1{\Phi^{(m)}(1/t)} w\big(\big\{y\in \rn:
  \mathcal{M}_{L(\log{L})}(\vec{f})(y)>t^m\big\}\big),
 \end{split}
\end{equation}
for all bounded measurable functions $f_1, \cdots, f_m$ with
compact support.
\end{proposition}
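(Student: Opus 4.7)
The plan is to induct on the number $j$ of $BMO$ factors appearing in the iterated commutator. More precisely, for a fixed weight $w\in A_\infty$ and exponent $0<p<\infty$, I will prove by induction on $j\in\{0,1,\dots,m\}$ that the strong-type bound (\ref{equ.prop-1}) and the weak-type bound (\ref{equ.prop-2}) hold with $T_{\Pi\vec b}$ replaced by $T_{\Pi\vec b_\sigma}$ for every $\sigma\in C_j^m$; the statement of the proposition is then the case $j=m$. The base case $j=0$ reduces to estimates for $T$ itself, and relies on the pointwise bound $M^{\sharp}_{\varepsilon}(T(\vec f))(x)\le C\,\mathcal M(\vec f)(x)$ established in \cite{lz} for suitable small $\varepsilon$: combined with the first inequality in (\ref{equ.mmm}) and Lemma \ref{lem.fs}, this gives (\ref{equ.prop-1}) and (\ref{equ.prop-2}) for $T$.

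For the inductive step, fix $\sigma\in C_j^m$ with $1\le j\le m$ and assume the two inequalities are already known for every iterated commutator $T_{\Pi\vec b_{\eta'}}(\vec f)$ with $|\eta'|<j$ (equivalently $|\eta|>m-j$, but after relabeling this is exactly a commutator in fewer $BMO$ factors). Choose $0<\delta<\varepsilon$ with $\delta<1/m$ and $\varepsilon$ small enough that the $L^p(w)$ and weak comparisons in Lemma \ref{lem.fs} apply. For (\ref{equ.prop-1}), Lemma \ref{lem.fs}(1) gives
\[
\|T_{\Pi\vec b_\sigma}(\vec f)\|_{L^p(w)}\le C\,\|M^{\sharp}_{\delta}(T_{\Pi\vec b_\sigma}(\vec f))\|_{L^p(w)},
\]
and Remark \ref{remark.sharp} bounds the integrand by a constant multiple of $\mathcal M_{L(\log L)}(\vec f)$, $M_\varepsilon(T(\vec f))$, and $M_\varepsilon(T_{\Pi\vec b_{\eta'}}(\vec f))$ with $|\eta'|<j$. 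The $\mathcal M_{L(\log L)}$ piece is the desired right-hand side; the $M_\varepsilon(T(\vec f))$ piece is handled by one more application of Lemma \ref{lem.fs}(1) together with the base-case pointwise bound $M^{\sharp}_{\varepsilon}(T(\vec f))\le C\mathcal M(\vec f)\le C\mathcal M_{L(\log L)}(\vec f)$; and each lower-order $M_\varepsilon(T_{\Pi\vec b_{\eta'}}(\vec f))$ term is controlled by Lemma \ref{lem.fs}(1) and the inductive hypothesis applied to $T_{\Pi\vec b_{\eta'}}$.

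For (\ref{equ.prop-2}), the same scheme works with Lemma \ref{lem.fs}(2) in place of Lemma \ref{lem.fs}(1), applied with $\varphi(\lambda)=1/\Phi^{(m)}(\lambda^{-1/m})$; one only needs to observe that $\Phi^{(m)}$ is doubling on $(0,\infty)$, hence so is $\varphi$, so the hypothesis of Lemma \ref{lem.fs}(2) is met. Inserting (\ref{equ.sharp0.1}) into the resulting inequality and splitting the level set $\{M^{\sharp}_\delta T_{\Pi\vec b_\sigma}(\vec f)>ct^m\}$ according to which of the (finitely many) summands dominates, one dominates the weak-type quantity for $T_{\Pi\vec b_\sigma}$ by the analogous quantity for $\mathcal M_{L(\log L)}(\vec f)$ plus quantities for $M_\varepsilon(T(\vec f))$ and $M_\varepsilon(T_{\Pi\vec b_{\eta'}}(\vec f))$; the latter two are then absorbed exactly as in the strong-type argument, using Lemma \ref{lem.fs}(2) and the inductive hypothesis.

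The main technical obstacle is the standing finiteness requirement in Lemma \ref{lem.fs}: one must know a priori that the quantities $\|M_\delta(T_{\Pi\vec b_\sigma}(\vec f))\|_{L^p(w)}$ and the corresponding weak-type suprema are finite before applying the sharp maximal comparisons. This will be resolved by first proving both estimates for bounded $f_j$ with compact support (the case to which Proposition \ref{prop.sharp} applies), for which $T_{\Pi\vec b_\sigma}(\vec f)$ has sufficient decay at infinity so that the relevant left-hand sides are finite, and then extending to general $\vec f$ by a standard truncation/density argument. A secondary point to track is that the constant $C_w$ in (\ref{equ.prop-1}) must be independent of $\vec b$; this is automatic because the $BMO$ norms are pulled out cleanly in (\ref{equ.sharp0.1}) and the inductive hypothesis can be assumed with a $\vec b$-independent constant at each level.
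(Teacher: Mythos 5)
Your overall plan matches the paper's: apply Lemma \ref{lem.fs} to the sharp maximal estimate of Proposition \ref{prop.sharp} and Remark \ref{remark.sharp}, and control the resulting terms $\mathcal{M}_{L(\log L)}(\vec f)$, $M_\varepsilon(T(\vec f))$, and $M_\varepsilon(T_{\Pi\vec b_{\eta'}}(\vec f))$ by known estimates; organizing the argument as an induction on the number of $BMO$ factors is a cosmetic repackaging of the paper's ``iterative procedure'' and the paper's direct use of Lemmas \ref{lem.lz.th6.2} and \ref{lem.lz.th7.2}.

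However, your treatment of the a priori finiteness requirement in Lemma \ref{lem.fs} has a genuine gap. You assert that for bounded $f_j$ with compact support ``$T_{\Pi\vec b_\sigma}(\vec f)$ has sufficient decay at infinity so that the relevant left-hand sides are finite,'' and that one then extends to general $\vec f$. This is not correct when $b_1,\dots,b_m\in BMO$ are \emph{unbounded}. Away from $\supp\vec f$, the iterated commutator has the representation
\[
T_{\Pi\vec b}(\vec f)(x)=\int_{(\rn)^m}\Big(\prod_{j=1}^m\big(b_j(x)-b_j(y_j)\big)\Big)K(x,\vec y)\,f_1(y_1)\cdots f_m(y_m)\,d\vec y,
\]
and the factors $b_j(x)-b_j(y_j)$ have no pointwise control for $BMO$ functions; there is no reason $|T_{\Pi\vec b}(\vec f)(x)|$ should decay like $\mathcal M(\vec f)(x)$ for $|x|$ large, and hence no reason $\|M_\delta(T_{\Pi\vec b}(\vec f))\|_{L^p(w)}$ should be finite. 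The paper resolves this by first assuming $b_j\in L^\infty$ --- for which the decay estimate $|T_{\Pi\vec b}(\vec f)(x)|\le C\|b_1\|_{L^\infty}\|b_2\|_{L^\infty}\mathcal M_{L\log L}(\vec f)(x)$ outside a large ball does hold --- and then recovering the $BMO$ case by a truncation $b_j\mapsto b_j^k$ together with a.e.\ convergence (via a subsequence) and Fatou's lemma. That truncation step cannot be skipped or folded into ``a standard truncation/density argument in $\vec f$''; it is a truncation in $\vec b$, and without it the claim (\ref{equ.claim}) has not been justified. The same issue recurs implicitly each time you invoke Lemma \ref{lem.fs}(1) or \ref{lem.fs}(2) at a lower level of the induction.

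A secondary, minor point: to bound $\|M_\varepsilon(T(\vec f))\|_{L^p(w)}$ the paper uses boundedness of $M$ on $L^{p/\varepsilon}(w)$ (for small enough $\varepsilon$) followed by the strong-type estimate $\|T(\vec f)\|_{L^p(w)}\lesssim\|\mathcal M(\vec f)\|_{L^p(w)}$ of Lemma \ref{lem.lz.th6.2}, rather than reapplying the Fefferman--Stein inequality to $T$ itself. Your alternative does work, but citing Lemma \ref{lem.lz.th6.2} and its commutator analogue Lemma \ref{lem.lz.th7.2} keeps the finiteness bookkeeping localized in one place and is the cleaner route.
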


To prove Proposition \ref{prop.2}, we need the following
results obtained in \cite{lz}.

\begin{lemma}[\cite{lz} Theorem 6.2]   \label{lem.lz.th6.2}
Let $T$ be an $m$-linear $\omega$-CZO with $\omega \in{Dini(1)}$,
$0<p<\infty$ and $w\in {A_{\infty}}$. Then there exists a constant
$C>0$ such that
$$\|T(\vec{f})\|_{L^p(w)}\le {C}\|{\cal{M}}(\vec{f})\|_{L^p(w)}
$$
for all bounded measurable functions $f_1, \cdots, f_m$
with compact support.
\end{lemma}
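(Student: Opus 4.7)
\smallskip
\noindent\textbf{Proof proposal.}
The plan is to reduce the inequality to a pointwise sharp maximal function estimate and then invoke Lemma \ref{lem.fs}. Concretely, I would prove that for any $0<\delta<1/m$ there is a constant $C=C_\delta>0$ such that
\begin{equation*}
M_{\delta}^{\sharp}\bigl(T(\vec{f})\bigr)(x) \le C\,\mathcal{M}(\vec{f})(x)
\end{equation*}
for every $x\in\rn$ and every $m$-tuple $\vec{f}$ of bounded, compactly supported functions. Granting this pointwise bound, fix such a $\delta$ together with $0<p<\infty$ and $w\in A_\infty$. Since the $f_j$ are bounded with compact support, a routine truncation (replace $T(\vec{f})$ by $\min(|T(\vec{f})|,N)\chi_{B(0,R)}$ and let $N,R\to\infty$) shows $\|M_\delta(T(\vec{f}))\|_{L^p(w)}<\infty$; applying Lemma \ref{lem.fs}(1) and the trivial bound $|T(\vec{f})|\le M_\delta(T(\vec{f}))$ a.e.\ then yields the desired inequality.

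For the pointwise estimate, fix $x\in\rn$ and a cube $Q\ni x$; set $Q^*=8\sqrt{n}\,Q$ and decompose each $f_j=f_j^0+f_j^\infty$ with $f_j^0=f_j\chi_{Q^*}$. Expanding $T(\vec{f})=\sum_{\alpha\in\{0,\infty\}^m} T(f_1^{\alpha_1},\dots,f_m^{\alpha_m})$ and choosing the constant
\begin{equation*}
c=\sum_{\alpha\neq(0,\dots,0)} T(f_1^{\alpha_1},\dots,f_m^{\alpha_m})(x),
\end{equation*}
I split $\left(\frac{1}{|Q|}\int_Q|T(\vec{f})(z)-c|^\delta\,dz\right)^{1/\delta}$ into $2^m$ terms. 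For the fully local piece ($\alpha=(0,\dots,0)$), apply Kolmogorov's inequality (Lemma \ref{lem.kolm}) with $p=\delta<1/m$ and $q=1/m$ together with Theorem \ref{thm.A} (the $L^1\times\cdots\times L^1\to L^{1/m,\infty}$ bound guaranteed by $\omega\in\mathrm{Dini}(1)$) to obtain the estimate
\begin{equation*}
|Q|^{-1/\delta}\|T(f_1^0,\dots,f_m^0)\|_{L^\delta(Q)}\le C|Q|^{-m}\prod_{j=1}^m\|f_j^0\|_{L^1(\rn)}\le C\,\mathcal{M}(\vec{f})(x).
\end{equation*}

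The main obstacle lies in the mixed/global terms, i.e.\ those $\alpha$ with at least one $\alpha_j=\infty$. For such $\alpha$ and any $z\in Q$, I would write
\begin{equation*}
|T(f_1^{\alpha_1},\dots,f_m^{\alpha_m})(z)-T(f_1^{\alpha_1},\dots,f_m^{\alpha_m})(x)|\le\int_{E_\alpha}|K(z,\vec{y})-K(x,\vec{y})|\prod_{j=1}^m|f_j^{\alpha_j}(y_j)|\,d\vec{y},
\end{equation*}
where $E_\alpha=\prod_j E_j^{\alpha_j}$ with $E_j^0=Q^*$ and $E_j^\infty=\rn\setminus Q^*$. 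Decomposing $E_\alpha$ into annular shells based on $\max_j|x-y_j|\in[2^{k+2}\sqrt{n}\ell(Q),2^{k+3}\sqrt{n}\ell(Q)]$ and using the smoothness condition (\ref{equ.k2}) gives the pointwise kernel bound $C\,\omega(2^{-k})/|2^{k+3}\sqrt{n}Q|^{mn}$ on the $k$-th shell (cf.\ (\ref{equ.sharp5})). Integrating, the $k$-th shell contributes $\omega(2^{-k})\prod_j\frac{1}{|2^k Q^*|}\int_{2^k Q^*}|f_j|\le\omega(2^{-k})\,\mathcal{M}(\vec{f})(x)$, and summing over $k\ge1$ using $\sum_k\omega(2^{-k})\approx|\omega|_{\mathrm{Dini}(1)}<\infty$ yields $C\,\mathcal{M}(\vec{f})(x)$. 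Averaging over $z\in Q$ with the trivial bound $(|Q|^{-1}\int_Q|\cdot|^\delta)^{1/\delta}\le|Q|^{-1}\int_Q|\cdot|$ (valid since $\delta<1$) and combining all the pieces gives the pointwise sharp-maximal estimate, and hence the lemma.
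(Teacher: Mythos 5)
This lemma is quoted from \cite{lz} (Theorem 6.2) and the paper gives no proof of it, only the remark that the range $1/m\le p<\infty$ proved there extends to $0<p<\infty$ by the argument of Corollary 3.8 in \cite{loptt}. Your proposal reconstructs exactly that standard argument: a pointwise bound $M_\delta^{\sharp}(T(\vec f))\le C\,\mathcal{M}(\vec f)$ for $0<\delta<1/m$ (local part via Kolmogorov's inequality and the $L^1\times\cdots\times L^1\to L^{1/m,\infty}$ bound of Theorem \ref{thm.A}, nonlocal parts via the shell decomposition, the regularity estimate (\ref{equ.k2}) and $\sum_k\omega(2^{-k})\lesssim|\omega|_{Dini(1)}$), followed by Lemma \ref{lem.fs}(1). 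This is correct and is the same route as the cited source and as the paper's own Propositions \ref{prop.sharp}--\ref{prop.2} for the commutator.

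The one soft spot is your verification of the finiteness hypothesis in Lemma \ref{lem.fs}(1): truncating by $\min(|T(\vec f)|,N)\chi_{B(0,R)}$ does not obviously preserve the sharp-function bound (the cutoff $\chi_{B(0,R)}$ introduces uncontrolled oscillation at the boundary of $B(0,R)$). The cleaner route, which the paper itself uses in the proof of Proposition \ref{prop.2}, is to take $w\in A_{p_0}$ and $\delta<p/p_0$, so that $\|M_\delta(T(\vec f))\|_{L^p(w)}\le C\|T(\vec f)\|_{L^p(w)}$ by the $L^{p_0}(w)$-boundedness of $M$, and then to check $\|T(\vec f)\|_{L^p(w)}<\infty$ directly by splitting $\rn$ into a large ball containing the supports (where one uses the assumed $L^{q_1}\times\cdots\times L^{q_m}\to L^q$ boundedness) and its complement (where the size estimate (\ref{equ.k1}) gives $|T(\vec f)(x)|\le C\,\mathcal{M}(\vec f)(x)$, and one may assume $\|\mathcal{M}(\vec f)\|_{L^p(w)}<\infty$). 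With that adjustment the proof is complete.
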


We remark that the authors of \cite{lz} proved Lemma \ref{lem.lz.th6.2}
for $1/m\le {p}<\infty$. Indeed, we can extend the range of $p$ from
$1/m\le {p}<\infty$ to $0<p<\infty$ by using the same argument of the
proof of Corollary 3.8 in \cite{loptt}.

\begin{lemma}[\cite{lz} Theorem 7.2]   \label{lem.lz.th7.2}
Let $T$ be an $m$-linear $\omega$-CZO and $\vec{b}\in {BMO^m}$.
Suppose that $1\le {j}\le {m}$ is an integer and
$T_{b_j}^{j}$ is the $j$-th entry commutator of $T$ with
$\vec{b}$ defined by (\ref{equ.j-th}). If $0<p<\infty$,
$w\in {A_{\infty}}$ and $\omega$ satisfies
$$\int_0^1\frac{\omega(t)}{t}\bigg(1+\log\frac1{t}\bigg)dt<\infty,
$$
then, there exists a constant $C>0$, depending on the $A_{\infty}$
constant of $w$, such that
$$\int_{\rn}\big|T_{b_j}^{j}(\vec{f})(x)\big|^pw(x)dx \le {C}
\|b_j\|_{BMO}^p
\int_{\rn} \big[\mathcal{M}^j_{L(\log{L})}(\vec{f})(x)\big]^p w(x)dx
$$
for all bounded measurable functions $f_1, \cdots, f_m$
with compact support.
\end{lemma}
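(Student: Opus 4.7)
The plan is to establish the stronger statement that both \eqref{equ.prop-1} and \eqref{equ.prop-2} hold for every iterated sub-commutator $T_{\Pi\vec{b}_\sigma}$ (with $\sigma\in C_j^m$ for $0\le j\le m$), and to proceed by induction on $j=|\sigma|$, driven by the sharp-function pointwise bound \eqref{equ.sharp0.1} from Remark \ref{remark.sharp}. The base case $j=0$ follows from Lemma \ref{lem.lz.th6.2} combined with the pointwise inequality $\mathcal{M}(\vec f)\le C\,\mathcal{M}_{L(\log L)}(\vec f)$ contained in \eqref{equ.mmm}, while the case $j=1$ is exactly Lemma \ref{lem.lz.th7.2}. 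For the inductive step at level $j$, I assume both \eqref{equ.prop-1} and \eqref{equ.prop-2} have been proved for every $T_{\Pi\vec{b}_\eta}$ with $|\eta|<j$.

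For the strong-type \eqref{equ.prop-1}, since $w\in A_\infty$ I fix $q<\infty$ with $w\in A_q$ and choose $0<\delta<\varepsilon$ with $\delta<1/m$ and $\varepsilon<p/q$, so that $M_\varepsilon$ is bounded on $L^p(w)$. Under the standard a priori finiteness of $\|M_\delta(T_{\Pi\vec{b}_\sigma}(\vec f))\|_{L^p(w)}$ (justified by truncation, using that each $f_j$ is bounded with compact support), Lemma \ref{lem.fs}(1) yields
$$\|T_{\Pi\vec{b}_\sigma}(\vec f)\|_{L^p(w)} \le \|M_\delta(T_{\Pi\vec{b}_\sigma}(\vec f))\|_{L^p(w)} \le C\|M^\sharp_\delta(T_{\Pi\vec{b}_\sigma}(\vec f))\|_{L^p(w)}.$$
Inserting \eqref{equ.sharp0.1} and taking $L^p(w)$-norms term by term, the $\mathcal{M}_{L(\log L)}(\vec f)$ contribution is already the target right-hand side; the $M_\varepsilon(T(\vec f))$ term is handled by $L^p(w)$-boundedness of $M_\varepsilon$ followed by Lemma \ref{lem.lz.th6.2} and \eqref{equ.mmm}; and each residual term $M_\varepsilon(T_{\Pi\vec{b}_{\eta'}}(\vec f))$ with $|\eta'|<j$ is handled by $L^p(w)$-boundedness of $M_\varepsilon$ followed by the inductive form of \eqref{equ.prop-1}. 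Collecting constants yields \eqref{equ.prop-1} for $\sigma$.

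For the weak-type \eqref{equ.prop-2}, I rewrite both sides as quasi-norms $\sup_\lambda \varphi(\lambda)\,w(\{\cdot>\lambda\})$ with $\varphi(\lambda)=1/\Phi^{(m)}(\lambda^{-1/m})$; this $\varphi$ is doubling because $\Phi(s)=s(1+\log^+s)$ grows only like $s$ times iterated logarithms for large $s$ and linearly for small $s$. Applying Lemma \ref{lem.fs}(2) with this $\varphi$ passes from $M_\delta(T_{\Pi\vec{b}_\sigma}(\vec f))$ to $M^\sharp_\delta(T_{\Pi\vec{b}_\sigma}(\vec f))$. Substituting \eqref{equ.sharp0.1} and using the doubling of $\varphi$ to absorb the constants $\prod\|b_{\sigma(i)}\|_{BMO}$ into $C_{w,\vec b}$, the $\mathcal{M}_{L(\log L)}(\vec f)$ piece is the target; for the $M_\varepsilon(T(\vec f))$ and $M_\varepsilon(T_{\Pi\vec{b}_{\eta'}}(\vec f))$ pieces, Chebyshev's inequality combined with the $L^p(w)$-boundedness of $M_\varepsilon$ and Lemma \ref{lem.lz.th6.2} (respectively the inductive hypothesis of \eqref{equ.prop-2}), together with the comparability of the strong and weak quasi-norms for bounded compactly supported data, reduces everything to the target quantity on the right of \eqref{equ.prop-2}.

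The principal obstacle is the careful calibration of the parameters $\delta,\varepsilon$ and the verification that $\varphi$ is doubling in the precise sense required by Lemma \ref{lem.fs}(2), along with the fact that each auxiliary operator $M_\varepsilon$ preserves the weak $\varphi$-quasi-norm in a form compatible with the inductive hypothesis; once these pieces are in place, the combinatorics of the sum over $\eta\in C_k^m$ in \eqref{equ.sharp0.1} and the absorption of the constants proceed in the spirit of the $A_\infty$-extrapolation arguments of \cite{loptt} and \cite{pptt}.
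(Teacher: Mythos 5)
Note first that the paper does not prove this lemma at all: it is quoted verbatim from \cite{lz} (Theorem 7.2) and used as a black box in the proof of Proposition \ref{prop.2}. So your proposal must be judged on its own merits, and while its architecture (sharp maximal estimate, Lemma \ref{lem.fs}, induction on the number of $BMO$ factors) is the standard and correct one for this circle of results, as written it does not prove the statement of Lemma \ref{lem.lz.th7.2}. The $j=1$ level of your induction is \emph{not} "exactly" the lemma, for two reasons. First, feeding \eqref{equ.sharp0.1} with $|\sigma|=1$ into Lemma \ref{lem.fs}(1) produces the bound
$\|T^j_{b_j}(\vec f)\|_{L^p(w)}\le C\|b_j\|_{BMO}\,\|\mathcal{M}_{L(\log L)}(\vec f)\|_{L^p(w)}$,
whereas the lemma asserts the same bound with the strictly smaller maximal function $\mathcal{M}^j_{L(\log L)}(\vec f)$, in which only the $j$-th slot carries the $L(\log L)$ average and the remaining slots carry plain $L^1$ averages; by \eqref{equ.mmm} your conclusion is genuinely weaker. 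Second, \eqref{equ.sharp0.1} is established under the $m$-fold log condition \eqref{equ.ldini}, while the lemma only assumes $\int_0^1\omega(t)t^{-1}(1+\log(1/t))\,dt<\infty$; your route therefore also imposes a stronger hypothesis on $\omega$. The missing ingredient is a dedicated pointwise estimate for the single-entry commutator,
$M^{\sharp}_{\delta}(T^j_{b_j}(\vec f))(x)\le C\|b_j\|_{BMO}\big(\mathcal{M}^j_{L(\log L)}(\vec f)(x)+M_{\varepsilon}(T(\vec f))(x)\big)$,
whose proof repeats the decomposition of Proposition \ref{prop.sharp} but pairs \emph{only} $f_j$ with the oscillation $b_j-\lambda_j$: then the generalized H\"older inequality \eqref{equ.bmo1} is needed only in the $j$-th slot, the other slots contribute plain averages, and only a single factor of $k$ arises from \eqref{equ.bmo2}, so that $\sum_k k\,\omega(2^{-k})<\infty$ — i.e.\ the single-log Dini condition — suffices. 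This is precisely what Theorem 7.2 of \cite{lz} does (following Theorem 3.16 of \cite{loptt}).

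A secondary remark: the weak-type half \eqref{equ.prop-2} of your "stronger statement" is not needed for this lemma, and the step where you dispose of the $M_{\varepsilon}(T(\vec f))$ and $M_{\varepsilon}(T_{\Pi\vec b_{\eta'}}(\vec f))$ pieces by "Chebyshev plus the comparability of strong and weak quasi-norms" is not a valid reduction: the functional $\sup_t\varphi(t)\,w(\{\cdot>t\})$ with $\varphi(t)=1/\Phi^{(m)}(t^{-1/m})$ is not controlled by an $L^p(w)$ norm, since $\Phi^{(m)}$ is not a power. The correct treatment of those terms requires separate weak $\varphi$-quasi-norm estimates, as in the second part of the proof of Theorem 3.19 in \cite{loptt}. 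For the strong-type statement actually at issue, however, the only substantive repair needed is the refined sharp-function estimate described above.
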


\begin{proof}[Proof of Proposition \ref{prop.2}]
Similar to the proof of Theorem 3.19 in \cite{loptt}, see also the proof of
Theorem 3.2 in \cite{pptt}, we can get (\ref{equ.prop-1}) and (\ref{equ.prop-2}).
For the sake of completeness, we give the proof of (\ref{equ.prop-1})
and indicate the argument in the case $m=2$.
An iterative procedure using (\ref{equ.sharp0}) and (\ref{equ.sharp0.1})
can be followed to obtain the general case $m>2$.

To prove (\ref{equ.prop-1}), we may assume
$\|{\cal{M}}_{L(\log{L})}(\vec{f})\|_{L^p(w)}$
is finite since otherwise there is nothing to be proven.
If we can verify that, for $0<\delta<1/m$ small enough,
\begin{equation} \label{equ.claim}
\int_{\rn}\big[M_{\delta}(T_{\Pi\vec{b}}(\vec{f}))(x)\big]^pw(x)dx<\infty,
\end{equation}
then, by Lemma \ref{lem.fs} (1) and Proposition \ref{prop.sharp} with exponents
$0<\delta<\varepsilon<1/m$, we have
\begin{equation}  \label{equ.prop.2-1}
\begin{split}
\big\|T_{\Pi\vec{b}}(\vec{f})\big\|_{L^p(w)}
 & \le {C} \big\|M_{\delta}\big(T_{\Pi\vec{b}}(\vec{f})\big)\big\|_{L^p(w)}
 \le {C} \big\|M_{\delta}^{\sharp}\big(T_{\Pi\vec{b}}(\vec{f})\big)\big\|_{L^p(w)}\\
 &\le{C} \|b_1\|_{BMO}\|b_2\|_{BMO} \big\|{\mathcal{M}_{L(\log{L})}}(f_1,f_2)\big\|_{L^p(w)}\\
&\quad +{C} \|b_1\|_{BMO}\|b_2\|_{BMO}\big\|M_{\varepsilon}\big(T(f_1,f_2)\big)\big\|_{L^p(w)}\\
&\quad +C\|b_2\|_{BMO}\big\|M_{\varepsilon}\big(T^1_{b_1}(f_1,f_2)\big)\big\|_{L^p(w)}\\
  &\quad +C\|b_1\|_{BMO}\big\|M_{\varepsilon}\big(T^2_{b_2}(f_1,f_2)\big)\big\|_{L^p(w)}.
\end{split}
\end{equation}

Since $w\in {A_{\infty}}$ then $w\in {A_{p_0}}$ for some $p_0>1$.
Choose $\varepsilon$ small enough so that $p/\varepsilon>p_0$,
then $w\in {A_{p/\varepsilon}}$. Applying the $L^{p/\varepsilon}(w)$-boundedness of
$M$ and Lemma \ref{lem.lz.th6.2}, we get
\begin{equation*}
\begin{split}
\big\|M_{\varepsilon}\big(T(f_1,f_2)\big)\big\|_{L^p(w)}
&= \big\|M\big(|T(f_1,f_2)|^{\varepsilon}\big)\big\|^{1/{\varepsilon}}_{L^{p/{\varepsilon}}(w)}\\
&\le{C}\big\||T(f_1,f_2)|^{\varepsilon}\big\|^{1/{\varepsilon}}_{L^{p/{\varepsilon}}(w)}\\
&= {C}\big\|T(f_1,f_2)\|_{L^{p}(w)}\\
& \le {C} \big\|{\cal{M}}(f_1,f_2)\big\|_{L^p(w)}\\
& \le {C} \big\|{\cal{M}}_{L(\log{L})}(f_1,f_2)\big\|_{L^p(w)}.
\end{split}
\end{equation*}

Similarly, by the $L^{p/\varepsilon}(w)$-boundedness of
$M$ and Lemma \ref{lem.lz.th7.2}, we have
$$\big\|M_{\varepsilon}\big(T^j_{b_j}(f_1,f_2)\big)\big\|_{L^p(w)}
\le {C} \|b_j\|_{BMO}\big\|{\cal{M}}_{L(\log{L})}(f_1,f_2)\big\|_{L^p(w)},
\quad j=1,2.
$$
Then, (\ref{equ.prop-1}) follows from (\ref{equ.prop.2-1}) and
the above estimates.

Thus, in order to finish the proof of Proposition \ref{prop.2},
it remains to verify (\ref{equ.claim}).
Note that since $w$ in $A_{\infty}$, $w$ is also in $A_{p_0}$ for
some $p_0$ satisfying
$0<\max\{1,mp\}<p_0<\infty$. So, for any $0<\delta<p/p_0<1/m$, it follows
from the fact that $M$ is bounded from $L^{p_0}(w)$ to itself that
\begin{equation*}
\begin{split}
\big\|M_{\delta}\big(T_{\Pi\vec{b}}(\vec{f})\big)\big\|_{L^p(w)}
 &\le \big\|M_{p/p_0}\big(T_{\Pi\vec{b}}(\vec{f})\big)\big\|_{L^p(w)}
   =\big\|M\big(|T_{\Pi\vec{b}}(\vec{f})|^{p/p_0}\big)\big\|_{L^{p_0}(w)}^{p_0/p}\\
 &\le{C} \big\||T_{\Pi\vec{b}}(\vec{f})|^{p/p_0}\big\|_{L^{p_0}(w)}^{p_0/p}
  =C\big\|T_{\Pi\vec{b}}(\vec{f})\big\|_{L^{p}(w)}.
\end{split}
\end{equation*}

Then, it is enough to show that $\|T_{\Pi\vec{b}}(\vec{f})\|_{L^{p}(w)}$ 
is finite for each tuple $\vec{f}=(f_1,f_2)$ of bounded functions with
compact support for which $\|{\cal{M}}_{L\log{L}}(\vec{f})\|_{L^p(w)}$ 
is finite.

Firstly, we assume that $b_1, b_2 \in {L^{\infty}(\rn)}$.
Let $B(0,r)$ be a sufficiently large ball centered at the origin 
with radius $r$ such that $\supp{f_j}\subset {B(0,r/2)}$ for $j=1,2$. 
Then
$$\big\|T_{\Pi\vec{b}}(\vec{f})\big\|_{L^{p}(w)}^p
  =\bigg(\int_{B(0,r)}+\int_{\rn\setminus{B(0,r)}}\bigg)
   \big|T_{\Pi\vec{b}}(\vec{f})(x)\big|^pw(x)dx
     =: I_1 + I_2.
$$

By (\ref{equ.sharp-T}) and Lemma \ref{lem.lz.th6.2} we have
\begin{equation*}
\begin{split}
I_1& \le{C} \|b_1\|_{L^{\infty}} \|b_2\|_{L^{\infty}} \big\|T(f_1,f_2)\big\|_{L^p(w)}^p
   +C\|b_2\|_{L^{\infty}}\big\|T(b_1f_1,f_2)\big\|_{L^p(w)}^p \\
  & \quad +C\|b_1\|_{L^{\infty}}\big\|T(f_1,b_2f_2)\big\|_{L^p(w)}^p
    +C\big\|T(b_1f_1,b_2f_2)\big\|_{L^p(w)}^p\\
 & \le{C} \|b_1\|_{L^{\infty}} \|b_2\|_{L^{\infty}} \big\|{\cal{M}}(f_1,f_2)\big\|_{L^p(w)}^p
   +C\|b_2\|_{L^{\infty}}\big\|{\cal{M}}(b_1f_1,f_2)\big\|_{L^p(w)}^p \\
  & \quad +C\|b_1\|_{L^{\infty}} \big\|{\cal{M}}(f_1,b_2f_2)\big\|_{L^p(w)}^p
    +C\big\|{\cal{M}}(b_1f_1,b_2f_2)\big\|_{L^p(w)}^p\\
 & \le{C} \|b_1\|_{L^{\infty}} \|b_2\|_{L^{\infty}} \big\|{\cal{M}}(f_1,f_2)\big\|_{L^p(w)}^p\\
 & \le{C} \big\|{\cal{M}}_{L\log{L}}(f_1,f_2)\big\|_{L^p(w)}^p <\infty.
\end{split}
\end{equation*}

On the other hand, we have for $b_1, b_2 \in L^{\infty}(\rn)$
and $x\notin {B(0,r)}$,
\begin{equation*}
\begin{split}
\big|T_{\Pi\vec{b}}(\vec{f})(x)\big| & =\bigg| \int_{(\rn)^2}
  (b_1(x)-b_1(y_1))(b_2(x)-b_2(y_2))K(x,\vec{y})f_1(y_1)f_2(y_2)d{\vec{y}}\bigg| \\
 & \le {C} \|b_1\|_{L^{\infty}} \|b_2\|_{L^{\infty}}
   \int_{(\rn)^2} \frac{|f_1(y_1)||f_2(y_2)|}{|x-y_1|^n|x-y_2|^n} dy_1 dy_2\\
 & \le {C} \|b_1\|_{L^{\infty}} \|b_2\|_{L^{\infty}}
   \int_{\supp{f_1}} \frac{|f_1(y_1)|}{|x-y_1|^n} dy_1
    \int_{\supp{f_2}} \frac{|f_2(y_2)|}{|x-y_2|^n} dy_2\\
 & \le {C} \|b_1\|_{L^{\infty}} \|b_2\|_{L^{\infty}}
  \frac{1}{|x|^n} \int_{B(0,2|x|)}|f_1(y_1)| dy_1
    \frac{1}{|x|^n} \int_{B(0,2|x|)} |f_2(y_2)| dy_2\\
 & \le {C} \|b_1\|_{L^{\infty}} \|b_2\|_{L^{\infty}} {\cal{M}}(f_1,f_2)(x)\\
 & \le {C} \|b_1\|_{L^{\infty}} \|b_2\|_{L^{\infty}} {\cal{M}}_{L\log{L}}(\vec{f})(x).
\end{split}
\end{equation*}
Thus
$$ I_2 \le {C} \|b_1\|_{L^{\infty}} \|b_2\|_{L^{\infty}}
 \int_{\rn\setminus{B(0,r)}} \big[{\cal{M}}_{L\log{L}}(\vec{f})(x)\big]^pw(x)dx,
$$
which is finite since we are assuming that
$\|{\cal{M}}_{L\log{L}}(\vec{f})\|_{L^p(w)}$ is finite.
This proves (\ref{equ.prop-1}) provided $b_1$ and $b_2$ are bounded.

To obtain the result for  the case $\vec{b}=(b_1,b_2) \in {BMO^2}$,
we will use a limiting argument as in \cite[page 1254]{loptt}.
Let $k$ be any positive integer and $\vec{b}^k=(b_1^k,b_2^k)$,
where $b_j^k$ is the following truncation of $b_j$ by $k$, which is
given by
$$b_j^k(x)=\begin{cases}
k, \quad & b_j(x)>k,\\
b_j(x), & |b_j(x)|\le {k},\\
-k, & b_j(x)<-k,
\end{cases}     \quad j=1,2.
$$

Clearly, $\{b_j^k\}_{k=1}^{\infty}$ converges pointwise to
$b_j$ as $k\to \infty$ and $\|b_j^k\|_{BMO}\le {C}\|b_j\|_{BMO}$
with $C$ being a positive constant independent of $k$.

Now, taking into account the fact that $f_1$ and $f_2$ are
bounded functions with compact support, we deduce that
$\big\{T(b_1^kf_1,b_2^kf_2)\big\}$ converges to $T(b_1f_1,b_2f_2)$
in any $L^q(\rn)$ for $q>1$ as $N\to \infty$ since $T$ is
bounded on $L^q(\rn)$. Then there is a subsequence of
$\{\vec{b}^{k}\}$, $\{\vec{b}^{k'}\}=\{(b_1^{k'},b_2^{k'})\}$,
so that $\big\{T(b_1^{k'}f_1,b_2^{k'}f_2)\big\}$ converges to
$T(b_1f_1,b_2f_2)$ almost everywhere as $k'\to \infty$.

Similarly, we can choose a subsequence of $\{\vec{b}^{k'}\}$,
$\{\vec{b}^{k''}\}=\{(b_1^{k''},b_2^{k''})\}$, so that
$\big\{T(b_1^{k''}f_1,f_2)\big\}$ and $\big\{T(f_1,b_2^{k''}f_2)\big\}$
converge to $T(b_1f_1,f_2)$ and $T(f_1,b_2f_2)$, respectively,
almost everywhere as $k''\to \infty$.
Thus, it follows from (\ref{equ.sharp-T}) that $T_{\Pi{\vec{b}}^{k''}}(f_1,f_2)$
converges to $T_{\Pi\vec{b}}(f_1,f_2)$ almost everywhere as $k''\to \infty$.

Observe that (\ref{equ.prop-1}) holds for $T_{\Pi{\vec{b}}^{k''}}(f_1,f_2)$
since $b_1^{k''}$ and $b_2^{k''}$ are bounded, then
\begin{equation*}
\begin{split}
\int_{\rn}\big|T_{\Pi\vec{b}^{k''}}(\vec{f})(x)\big|^pw(x)dx
 &\le {C_w}\|b_1^{k''}\|_{BMO}^p \|b_2^{k''}\|_{BMO}^p
   \int_{\rn} \big[\mathcal{M}_{L(\log{L})}(\vec{f})(x)\big]^p w(x)dx\\
 &\le {C_w}\|b_1\|_{BMO}^p \|b_2\|_{BMO}^p
   \int_{\rn} \big[\mathcal{M}_{L(\log{L})}(\vec{f})(x)\big]^p w(x)dx.
\end{split}
\end{equation*}
This together with Fatou's lemma gives (\ref{equ.prop-1}) for
the case $\vec{b}\in {BMO^2}$.

For the general case $m>2$, an iterative using (\ref{equ.sharp0}) and
(\ref{equ.sharp0.1}) leads us to estimate
$$\sum_{j=1}^{m-1} \sum_{\sigma \in {C_j^m}}
\bigg(\prod_{i=1}^j \|b_{\sigma(i)}\|_{BMO}\bigg)
M_{\varepsilon}\big(T_{\Pi\vec{b}_{\sigma'}}(\vec{f})\big)(x)$$
by a linear combination of the terms like $M_{\varepsilon}(T^j_{b_j}(\vec{f}))(x)$,
${\mathcal{M}_{L(\log{L})}}(\vec{f})(x)$ and $M_{\varepsilon}\big(T(\vec{f})\big)(x)$.
So, the procedure used above can be followed to get (\ref{equ.prop-1}) for
the general case $m>2$.

We note that one can prove (\ref{equ.prop-2}) by following the second part
of the proof of Theorem 3.19 in \cite{loptt} step by step, see also the
proof of Theorem 3.2 in \cite{pptt}. We omit the details.

The proof of Proposition \ref{prop.2} is complete.
\end{proof}

\begin{proof}{\bf {of Theorem \ref{thm.1.1}}}\ \  It suffices to prove
Theorem \ref{thm.1.1} for $f_1, \cdots, f_m$ being bounded with compact
support. For $\vec{w} \in {A_{\vec{P}}}$, Lemma \ref{lem.ww}
implies $\nu_{\vec{w}} \in {A_{\infty}}$. It follows from (\ref{equ.prop-1}) that
$$\big\|T_{\Pi\vec{b}}(\vec{f})\big\|_{L^p(\nu_{\vec{w}})} \le {C}
 \bigg(\prod_{j=1}^m\|b_j\|_{BMO}\bigg)
  \big\|{\cal{M}}_{L(\log{L})}(\vec{f})\big\|_{L^p(\nu_{\vec{w}})}.
$$

By (\ref{equ.mmm}) and Lemma \ref{lem.mrw}, for some $r>1$,
\begin{equation*}
\begin{split}
\big\|T_{\Pi\vec{b}}(\vec{f})\big\|_{L^p(\nu_{\vec{w}})} &\le {C}
 \bigg(\prod_{j=1}^m\|b_j\|_{BMO}\bigg)
  \big\|{\cal{M}}_{r}(\vec{f})\big\|_{L^p(\nu_{\vec{w}})}\\
&\le {C}  \bigg(\prod_{j=1}^m\|b_j\|_{BMO}\bigg)
  \prod_{j=1}^m \|f_j\|_{L^{p_j}(w_j)}.
\end{split}
\end{equation*}
This concludes the proof of Theorem \ref{thm.1.1}.
\end{proof}

\begin{proof}{\bf {of Theorem \ref{thm.1.2}}}\ \ By homogeneity,
we can assume $\lambda=1$ and $\|b_j\|_{BMO}=1$ for $j=1,\cdots,m$,
and hence it is enough to prove
$$\nu_{\vec{w}}\big(\big\{ x\in \rn:
 \big|T_{\Pi\vec{b}}(\vec{f})(x)\big|>1 \big\}\big) \le {C}
  \prod_{j=1}^m \left(\int_{\rn}\Phi^{(m)} (|f_j(x)|)w_j(x)dx\right)^{1/m}.
$$

Note that $\nu_{\vec{w}}\in {A_{\infty}}$ when $\vec{w} \in {A_{(1,\cdots,1)}}$,
it follows from (\ref{equ.prop-2}) and Lemma \ref{lem.m-llogl} that
\begin{equation*}
\begin{split}
\nu_{\vec{w}}&\big(\big\{ x\in \rn:
 \big|T_{\Pi\vec{b}}(\vec{f})(x)\big|>1 \big\}\big)\\
 & \le \sup_{t>0}\frac1{\Phi^{(m)}(1/t)} \nu_{\vec{w}}
  \big(\big\{y\in \rn: \big|T_{\Pi\vec{b}}(\vec{f})(y)\big|>t^m\big\}\big) \\
 & \le {C} \sup_{t>0}\frac1{\Phi^{(m)}(1/t)} \nu_{\vec{w}}
  \big(\big\{y\in \rn:  \mathcal{M}_{L(\log{L})}(\vec{f})(y)>t^m\big\}\big)\\
 &\le {C} \sup_{t>0}\frac1{\Phi^{(m)}(1/t)}
  \prod_{j=1}^m \left(\int_{\rn}\Phi^{(m)}
   \bigg(\frac{|f_j(x)|}{t} \bigg)w_j(x)dx\right)^{1/m}\\
 &\le {C} \sup_{t>0}\frac1{\Phi^{(m)}(1/t)}
  \prod_{j=1}^m \left(\int_{\rn}\Phi^{(m)}
   (|f_j(x)|)w_j(x) \Phi^{(m)}(1/t) dx\right)^{1/m}\\
 &\le {C} \prod_{j=1}^m \left(\int_{\rn}\Phi^{(m)}
   (|f_j(x)|)w_j(x) dx\right)^{1/m}.
\end{split}
\end{equation*}

So complete the proof of Theorem \ref{thm.1.2}.
\end{proof}


\section{Proof of Theorem \ref{thm.1.3}}   \label{variable}

We first recall some facts on variable exponent Lebesgue spaces.
As the classical Lebesgue norm, the (quasi-)norm of variable
exponent Lebesgue space is also homogeneous in the exponent.
Precisely, we have the following result, see \cite[Lemma 2.3]{cu-w2014}.

\begin{lemma}  \label{lem.s-norm}
Given $p(\cdot) \in \mathscr{P}_0$, then for all $s>0$, we have
$$\big\||f|^s\big\|_{L^{p(\cdot)}(\rn)}=\|f\|^s_{L^{sp(\cdot)}(\rn)}.
$$
\end{lemma}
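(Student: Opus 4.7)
The plan is to prove the identity directly from the Luxemburg-type definition of the variable-exponent (quasi-)norm, using a single change of parameter $\lambda = \mu^s$.

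First I would observe that $sp(\cdot)$ still lies in $\mathscr{P}_0$, since $0 < sp^- \le sp^+ < \infty$ whenever $s > 0$ and $p(\cdot) \in \mathscr{P}_0$; hence the right-hand side norm $\|f\|_{L^{sp(\cdot)}(\rn)}$ is well defined by the same infimum formula recalled just before the statement. Next I would expand the left-hand side using the definition applied to the function $|f|^s$ with exponent $p(\cdot)$:
$$\big\||f|^s\big\|_{L^{p(\cdot)}(\rn)} = \inf\left\{\lambda>0:\ \int_{\rn}\left(\frac{|f(x)|^s}{\lambda}\right)^{p(x)}dx \le 1\right\}.$$

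The key step is the substitution $\lambda = \mu^s$, which is a strictly increasing bijection of $(0,\infty)$ onto itself. Under it, the integrand rewrites as
$$\left(\frac{|f(x)|^s}{\mu^s}\right)^{p(x)} = \left(\left(\frac{|f(x)|}{\mu}\right)^{s}\right)^{p(x)} = \left(\frac{|f(x)|}{\mu}\right)^{sp(x)},$$
a pointwise identity that requires no assumption beyond $\mu > 0$. Therefore the admissible set of $\lambda$'s corresponds bijectively, via $\lambda \leftrightarrow \mu^s$, to the admissible set of $\mu$'s in the definition of $\|f\|_{L^{sp(\cdot)}(\rn)}$.

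Finally, using that $t\mapsto t^s$ is continuous and strictly increasing on $(0,\infty)$, the infimum commutes with the $s$-th power: $\inf\{\mu^s : \mu \in E\} = (\inf E)^s$ for any $E \subset (0,\infty)$. Applying this to $E = \{\mu>0 : \int_{\rn}(|f|/\mu)^{sp(x)}\,dx \le 1\}$ yields
$$\big\||f|^s\big\|_{L^{p(\cdot)}(\rn)} = \left(\inf\left\{\mu>0:\ \int_{\rn}\left(\frac{|f(x)|}{\mu}\right)^{sp(x)}dx \le 1\right\}\right)^{s} = \|f\|^{s}_{L^{sp(\cdot)}(\rn)},$$
which is the claim. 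There is no real obstacle here; the only point meriting care is the degenerate cases $\|f\|_{L^{sp(\cdot)}(\rn)} \in \{0,\infty\}$, which are handled by the convention that an empty infimum is $+\infty$ and by noting that the substitution $\lambda = \mu^s$ preserves both endpoints.
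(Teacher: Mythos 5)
Your argument is correct: the substitution $\lambda=\mu^s$ together with the pointwise identity $(|f|^s/\mu^s)^{p(x)}=(|f|/\mu)^{sp(x)}$ and the fact that $\inf\{\mu^s:\mu\in E\}=(\inf E)^s$ gives the claim, and your remark on the degenerate cases is the right precaution. The paper itself gives no proof of this lemma (it is quoted from Cruz-Uribe and Wang, Lemma 2.3), and your direct computation from the Luxemburg-norm definition is exactly the standard argument one would supply.
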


The following generalized H\"older's inequality was proved in
\cite[Theorem 2.3]{h-x}. 

\begin{lemma}  \label{lem.gholder}
Let $r(\cdot),~ p(\cdot),~ q(\cdot)\in \mathscr{P}_0$ so that
$1/r(\cdot)=1/p(\cdot) + 1/q(\cdot)$. Then, for any $f\in {L^{p(\cdot)}}(\rn)$
and $g\in {L^{q(\cdot)}}(\rn)$,
$$ \|fg\|_{L^{r(\cdot)}(\rn)} \le {C}
 \|f\|_{L^{p(\cdot)}(\rn)} \|g\|_{L^{q(\cdot)}(\rn)},
$$
where the constant $C$ depends only on $p(\cdot)$ and $q(\cdot)$.
\end{lemma}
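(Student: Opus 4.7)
The plan is a homogeneity reduction followed by a pointwise Young inequality applied to the modular of $L^{r(\cdot)}(\rn)$. First I would note that $r(\cdot)\in\mathscr{P}_0$ combined with the identity $1/r(\cdot)=1/p(\cdot)+1/q(\cdot)$ forces the ratios $\alpha(x):=p(x)/r(x)$ and $\beta(x):=q(x)/r(x)$ to satisfy $\alpha(x),\beta(x)\ge 1$ and $1/\alpha(x)+1/\beta(x)=1$ for almost every $x\in\rn$. These pointwise conjugate pairs will play the role of the H\"older conjugate exponents in the subsequent scalar inequality.

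By the homogeneity of the Luxemburg quasi-norm, I may assume $\|f\|_{L^{p(\cdot)}(\rn)}=\|g\|_{L^{q(\cdot)}(\rn)}=1$, which yields the modular bounds
\[
\int_{\rn}|f(x)|^{p(x)}\,dx\le 1\quad\text{and}\quad\int_{\rn}|g(x)|^{q(x)}\,dx\le 1.
\]
Applying the classical scalar Young inequality $AB\le A^{\alpha(x)}/\alpha(x)+B^{\beta(x)}/\beta(x)$ with $A=|f(x)|^{r(x)}$ and $B=|g(x)|^{r(x)}$ gives, pointwise a.e.,
\[
|f(x)g(x)|^{r(x)}\le\frac{r(x)}{p(x)}|f(x)|^{p(x)}+\frac{r(x)}{q(x)}|g(x)|^{q(x)}.
\]
Integrating and using $r(x)/p(x)\le 1$ and $r(x)/q(x)\le 1$, the two modular bounds combine to $\int_{\rn}|f(x)g(x)|^{r(x)}\,dx\le 2$.

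To convert this modular estimate into a norm estimate, I would choose $\lambda=2^{1/r^{-}}\ge 1$, so that $\lambda^{r(x)}\ge 2$ for a.e.\ $x$ and hence $\int_{\rn}(|f(x)g(x)|/\lambda)^{r(x)}\,dx\le 1$. By definition of the Luxemburg quasi-norm this gives $\|fg\|_{L^{r(\cdot)}(\rn)}\le 2^{1/r^{-}}$, and undoing the initial normalization produces the inequality of the lemma with constant $C=2^{1/r^{-}}$; this constant is determined by $r^{-}$, which in turn depends only on $p(\cdot)$ and $q(\cdot)$ through the relation $1/r(\cdot)=1/p(\cdot)+1/q(\cdot)$.

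The only point of care is the modular-to-norm passage, since the variability of $r(\cdot)$ means one cannot simply divide inside the integrand; the observation that $\lambda^{r(x)}\ge\lambda^{r^{-}}$ whenever $\lambda\ge 1$ is what makes the reduction uniform in $x$. Everything else is a routine combination of Young's inequality and integration over $\rn$, and does not require either Lemma \ref{lem.s-norm} or any structural result beyond the definition of $L^{r(\cdot)}(\rn)$; a slightly different route using Lemma \ref{lem.s-norm} with $s=r^{-}$ to pull out the infimum exponent before invoking the constant-exponent H\"older inequality is also viable, but the direct Young-plus-modular strategy is cleaner.
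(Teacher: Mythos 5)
Your proof is correct. The pointwise Young inequality with the variable conjugate pair $\alpha(x)=p(x)/r(x)$, $\beta(x)=q(x)/r(x)$ (which are a.e.\ greater than $1$ since $p^+,q^+<\infty$), the modular bound $\int|fg|^{r(x)}\,dx\le 2$, and the passage to the norm via $\lambda=2^{1/r^-}$ (using $\lambda^{r(x)}\ge\lambda^{r^-}$ for $\lambda\ge 1$) are all valid; the reduction from $\|f\|_{L^{p(\cdot)}}=1$ to $\int|f|^{p(x)}\,dx\le 1$ is justified by monotone convergence in the Luxemburg infimum. The one point worth making explicit is that you should dispose of the degenerate case $f\equiv 0$ or $g\equiv 0$ before normalizing.

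The paper itself gives no proof of this lemma; it simply cites it as Theorem~2.3 of the reference \cite{h-x}. Your argument is essentially the standard textbook proof (as in \cite{cu-f} or \cite{dhhr}), so there is no substantive divergence from what the cited source does, but it is a virtue of your write-up that it is self-contained, makes the dependence of the constant $C=2^{1/r^-}$ on $p(\cdot),q(\cdot)$ explicit through $r^-$, and that it avoids invoking Lemma~\ref{lem.s-norm} or a constant-exponent H\"older inequality as a black box. The alternate route you sketch at the end, pulling out $s=r^-$ via Lemma~\ref{lem.s-norm} and then applying the two-exponent H\"older inequality on $\mathscr{P}$, would also work, but as you say the direct modular computation is cleaner and handles the quasi-norm case $r^-<1$ uniformly.
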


By induction argument, we can generalize Lemma \ref{lem.gholder}
to more exponents.

\begin{lemma} \label{lem.gholder-m}
Let $q(\cdot),~ q_{1}(\cdot),\cdots, q_{m}(\cdot)\in
\mathscr{P}_0$ so that $1/{q(\cdot)}=1/{q_{1}(\cdot)} +\cdots +1/{q_{m}(\cdot)}$.
Then for any $f_j\in {L^{q_{j}(\cdot)}}(\rn)$, $j=1, \cdots,m$,
$$ \|f_1\cdots {f_m}\|_{L^{q(\cdot)}(\rn)} \le {C}
\|f_1\|_{L^{q_{1}(\cdot)}(\rn)}\cdots \|f_m\|_{L^{q_{m}(\cdot)}(\rn)},
$$
where the constant $C$ depends only on $q_1(\cdot), \cdots, q_m(\cdot)$.
\end{lemma}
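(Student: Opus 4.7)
The plan is to prove this by induction on $m$, using Lemma \ref{lem.gholder} both as the base case and as the inductive step.

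For the base case $m=2$, the statement is exactly Lemma \ref{lem.gholder}, so nothing needs to be done. For the inductive step, assume the inequality is known for any collection of $m-1$ exponents in $\mathscr{P}_0$ whose reciprocals sum to the reciprocal of an exponent in $\mathscr{P}_0$. Given $q(\cdot), q_1(\cdot), \ldots, q_m(\cdot) \in \mathscr{P}_0$ satisfying the hypothesis, I would introduce an auxiliary exponent $r(\cdot)$ defined pointwise by
$$\frac{1}{r(\cdot)} = \frac{1}{q_1(\cdot)} + \cdots + \frac{1}{q_{m-1}(\cdot)},$$
so that $1/q(\cdot) = 1/r(\cdot) + 1/q_m(\cdot)$.

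The first step is to verify that $r(\cdot) \in \mathscr{P}_0$. Since each $q_j(\cdot) \in \mathscr{P}_0$ satisfies $0 < q_j^- \le q_j^+ < \infty$, the reciprocals $1/q_j(\cdot)$ are bounded above and below away from $0$, so the finite sum $1/r(\cdot)$ enjoys the same property; equivalently $0 < r^- \le r^+ < \infty$. Once $r(\cdot) \in \mathscr{P}_0$ is established, I would apply Lemma \ref{lem.gholder} with the factorization $f_1 \cdots f_m = (f_1 \cdots f_{m-1}) \cdot f_m$ and the decomposition $1/q(\cdot) = 1/r(\cdot) + 1/q_m(\cdot)$, yielding
$$\|f_1 \cdots f_m\|_{L^{q(\cdot)}(\rn)} \le C \, \|f_1 \cdots f_{m-1}\|_{L^{r(\cdot)}(\rn)} \, \|f_m\|_{L^{q_m(\cdot)}(\rn)},$$
with $C$ depending only on $r(\cdot)$ and $q_m(\cdot)$, hence on $q_1(\cdot), \ldots, q_m(\cdot)$. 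Applying the inductive hypothesis to the first factor then produces the desired bound with a constant depending only on $q_1(\cdot), \ldots, q_m(\cdot)$.

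There is no serious obstacle here; the routine point requiring care is the verification that each intermediate exponent generated by the induction lies in $\mathscr{P}_0$ (so that Lemma \ref{lem.gholder} and the inductive hypothesis are legitimately applicable), together with the bookkeeping that the constant produced at stage $m$ depends only on the $m$ exponents involved. Both are immediate from the boundedness properties of exponents in $\mathscr{P}_0$, so the argument reduces to iterating the two-factor Hölder inequality.
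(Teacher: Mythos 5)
Your proof is correct and follows exactly the route the paper intends: the paper simply states that Lemma \ref{lem.gholder-m} follows from Lemma \ref{lem.gholder} ``by induction argument,'' and your write-up supplies precisely that induction, including the necessary check that the intermediate exponent $r(\cdot)$ remains in $\mathscr{P}_0$.
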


In 2012, Cruz-Uribe, Fiorenza and Neugebauer \cite{cfn} proved
the following weighted norm inequality for Hardy-Littlewood
maximal operator on weighted variable exponent Lebesgue
space, see \cite[Theorem 1.5]{cfn}.

\begin{lemma}[\cite{cfn}] \label{lem.c-f-n}
Let $p(\cdot) \in \mathscr{P}$ and satisfy (\ref{equ.log1})
and (\ref{equ.log2}), then for any $v\in {A_{p(\cdot)}}$,
$$\|Mf\|_{L_v^{p(\cdot)}(\rn)} \le {C} \|f\|_{L_v^{p(\cdot)}(\rn)}.
$$
\end{lemma}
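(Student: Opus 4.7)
The plan is to prove Lemma \ref{lem.c-f-n} by reducing the weighted variable-exponent bound for $M$ to classical Muckenhoupt $A_p$ theory via a dyadic decomposition combined with the log-H\"older continuity of $p(\cdot)$. This follows the original strategy of Cruz-Uribe, Fiorenza and Neugebauer.

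First, I would establish a local ``freezing'' principle for the exponent. Using (\ref{equ.log1}) on balls of small radius and (\ref{equ.log2}) on balls reaching out toward infinity, for every ball $B\subset\rn$ with center $x_B$ and radius $r_B$ one obtains a constant $p_B\in[p^-,p^+]$ (for instance $p_B=p(x_B)$) such that $|p(x)-p_B|\le C/\log(e+|x_B|+r_B^{-1})$ for all $x\in B$. This allows the local norm $\|g\chi_B\|_{L^{p(\cdot)}}$ to be compared with $\|g\chi_B\|_{L^{p_B}}$ with error factors that are summable over dyadic dilates of $B$. Consequently the condition $v\in A_{p(\cdot)}$ translates into a uniform local Muckenhoupt $A_{p_B}$ bound for suitable powers of $v$ on every ball.

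Next, for each $\lambda>0$ I would perform a Calder\'on-Zygmund decomposition of $f$ at height $\lambda$, and combine the resulting cubes across dyadic levels to produce a sparse family $\mathcal S$ satisfying
\begin{equation*}
Mf(x)\le C\sum_{Q\in\mathcal S}\langle |f|\rangle_Q\,\chi_{E_Q}(x),
\end{equation*}
with pairwise disjoint sets $E_Q\subset Q$ such that $|E_Q|\ge\tfrac12|Q|$. Dualizing through the generalized H\"older inequality (Lemma \ref{lem.gholder}) applied with $1/p(\cdot)+1/p'(\cdot)=1$, the bound reduces to proving
\begin{equation*}
\sum_{Q\in\mathcal S}\langle |f|\rangle_Q\int_{E_Q}g\,dx\le C\|fv\|_{L^{p(\cdot)}}\|gv^{-1}\|_{L^{p'(\cdot)}}
\end{equation*}
for all nonnegative $g$. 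On each cube $Q$ one replaces $p(\cdot)$ by the frozen exponent $p_Q$, applies the local $A_{p_Q}$ duality estimate together with the classical Muckenhoupt bound for $M$ on $L^{p_Q}(v^{p_Q})$, and then reassembles using sparseness and H\"older's inequality.

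The main obstacle will be controlling the interaction between the variable exponent and the $A_{p(\cdot)}$ weight. The core technical step is a two-sided comparison of the form
\begin{equation*}
\|v\chi_Q\|_{L^{p(\cdot)}}\approx v(Q)^{1/p_Q}|Q|^{1/p_Q-1/p(\cdot)\text{ corrections}},
\end{equation*}
with constants independent of $Q$; both log-H\"older conditions enter in a delicate way, since (\ref{equ.log1}) controls the local oscillation of $p(\cdot)$ while (\ref{equ.log2}) ensures convergence of the tail contributions from cubes near infinity. Once this comparison is available, the $A_{p(\cdot)}$ hypothesis yields uniform $A_{p_Q}$ constants across all cubes in $\mathcal S$, and the classical weighted bound for $M$ can be invoked uniformly to close the estimate.
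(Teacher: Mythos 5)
First, a point of order: the paper does not prove Lemma \ref{lem.c-f-n} at all. It is imported verbatim as Theorem 1.5 of the cited work of Cruz-Uribe, Fiorenza and Neugebauer and used as a black box in Section 4, so there is no internal proof to compare against. What you have written is an attempt to reconstruct the proof of the cited theorem.

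As a reconstruction, your architecture is the standard and correct one --- freeze the exponent, dominate $M$ by a sparse operator, dualize, and run constant-exponent Muckenhoupt theory cube by cube --- but as written it has two genuine gaps, and they are exactly where all the work in the actual proof lives. (a) The ``two-sided comparison'' $\|v\chi_Q\|_{L^{p(\cdot)}}\approx(\,\cdot\,)^{1/p_Q}$ is asserted, not proved, and the naive form fails for large cubes: for a cube of side length $\gtrsim 1$ the local oscillation bound (\ref{equ.log1}) gives nothing, and one must switch to the exponent $p_\infty$ from (\ref{equ.log2}) and control error factors of the type $|Q|^{|p(x)-p_\infty|}$ together with the behaviour of $v$ near infinity; this in turn requires knowing a priori that a suitable power of $v$ is locally integrable and that the $A_{p(\cdot)}$ condition survives the exponent change, neither of which is automatic. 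Simply saying that ``both log-H\"older conditions enter in a delicate way'' names the difficulty without resolving it. (b) Even granting uniform $A_{p_Q}$ constants, the reassembly over the sparse family does not close by H\"older alone, because each cube carries a \emph{different} frozen exponent $p_Q$: the bilinear sum $\sum_{Q}\langle|f|\rangle_Q\int_{E_Q}g\,dx$ cannot be fed into a single $L^{p(\cdot)}$--$L^{p'(\cdot)}$ pairing without first passing every cube back to a common exponent and verifying that the accumulated correction factors are summable over the family. Until (a) and (b) are actually carried out, the proposal is an outline of the known proof rather than a proof.
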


The following extrapolation theorem is due to Cruz-Uribe
and Wang \cite[Theorem 2.24]{cu-w}.

\begin{lemma}[\cite{cu-w}] \label{lem.extra}
Given a family $\mathcal{F}$ of ordered pairs of measurable
functions. Suppose that for some $0<p_0<\infty$ and
every $w\in {A_{\infty}}$, the following inequality holds for
all $(f,g)\in \mathcal{F}$,
$$\int_{\rn} |f(x)|^{p_0}\omega(x)dx \le {C_0}
\int_{\rn} |g(x)|^{p_0}\omega(x) dx.
$$
Let $p(\cdot)\in \mathscr{P}_0$, if there exists $s\le {p^{-}}$
such that $v^s \in {A_{p(\cdot)/s}}$ and $M$ is bounded on
$L_{v^{-s}}^{(p(\cdot)/s)'}(\rn)$, then there is a positive constant
$C$ such that
$$\|f\|_{L_v^{p(\cdot)}(\rn)}\le {C}\|g\|_{L_v^{p(\cdot)}(\rn)}
$$
for every pair $(f,g)\in \mathcal{F}$ such that the left-hand
side is finite.
\end{lemma}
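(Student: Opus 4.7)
The result is a Rubio de Francia-type extrapolation adapted to weighted variable exponent Lebesgue spaces, and the plan is to mirror the classical argument while carefully tracking the power $s$ that bridges the fixed exponent $p_0$ in the hypothesis and the variable exponent $p(\cdot)$ in the conclusion. First, I would use the power rule for the variable exponent norm (Lemma \ref{lem.s-norm}) to reduce the target inequality $\|f\|_{L_v^{p(\cdot)}(\rn)} \le C\,\|g\|_{L_v^{p(\cdot)}(\rn)}$ to its $s$-th power version
$$\bigl\||f|^s\bigr\|_{L_{v^s}^{p(\cdot)/s}(\rn)} \le C^{s} \bigl\||g|^s\bigr\|_{L_{v^s}^{p(\cdot)/s}(\rn)}.$$
Writing $q(\cdot):=p(\cdot)/s$, $u:=v^s$, $F:=|f|^s$, and $G:=|g|^s$, the new setup satisfies $q^{-}\ge 1$, $u\in A_{q(\cdot)}$, and $M$ is bounded on $L^{q'(\cdot)}_{u^{-1}}(\rn)$; the hypothesis becomes $\int_{\rn} F^{p_0/s}w\,dx \le C_0\int_{\rn} G^{p_0/s}w\,dx$ for every $w\in A_{\infty}$.

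Next, I would pass through the associate space: by duality for variable Lebesgue spaces and the substitution $h=\psi u^{-1}$,
$$\|F\|_{L_u^{q(\cdot)}(\rn)} \;\sim\; \sup\Bigl\{\int_{\rn} F\psi\,dx : \psi\ge 0,\ \|\psi\|_{L^{q'(\cdot)}_{u^{-1}}(\rn)}\le 1\Bigr\},$$
so it suffices to obtain a uniform control $\int F\psi\,dx \le C\|G\|_{L_u^{q(\cdot)}}$ for each admissible $\psi$. The $A_{\infty}$ weight needed to exploit the hypothesis is produced by the Rubio de Francia iteration: with $\mathsf{N}:=\|M\|_{L^{q'(\cdot)}_{u^{-1}} \to L^{q'(\cdot)}_{u^{-1}}}$, set
$$\mathcal R\psi(x):=\sum_{k=0}^{\infty}\frac{M^{k}\psi(x)}{(2\mathsf{N})^{k}}.$$
The three standard properties follow immediately: (i) $\psi \le \mathcal R\psi$; (ii) $\|\mathcal R\psi\|_{L^{q'(\cdot)}_{u^{-1}}}\le 2\|\psi\|_{L^{q'(\cdot)}_{u^{-1}}}$; and (iii) $M(\mathcal R\psi)\le 2\mathsf{N}\,\mathcal R\psi$, so $\mathcal R\psi\in A_1\subset A_{\infty}$ with a constant controlled by $\mathsf{N}$.

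With $w:=\mathcal R\psi\in A_{\infty}$ in hand, I would close the loop by inserting the correct power of $w$ via H\"older's inequality. Property (i) first gives $\int F\psi\,dx\le \int F\,\mathcal R\psi\,dx$. Writing $F\,\mathcal R\psi=(F\,w^{s/p_0})\cdot(\mathcal R\psi\,w^{-s/p_0})$ and applying H\"older with exponents $p_0/s$ and $(p_0/s)'$ transfers the hypothesis: the first factor produces $(\int F^{p_0/s}w)^{s/p_0}\le C_0^{s/p_0}(\int G^{p_0/s}w)^{s/p_0}$, and reversing the H\"older step yields $\int F\,\mathcal R\psi\,dx\le C\int G\,\mathcal R\psi\,dx$. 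Finally, the generalized H\"older inequality for variable exponent spaces (Lemma \ref{lem.gholder}) applied to $\int G\,\mathcal R\psi\,dx=\int (Gu)(\mathcal R\psi u^{-1})\,dx$ together with (ii) gives
$$\int_{\rn} G\,\mathcal R\psi\,dx \le C\|G\|_{L_u^{q(\cdot)}(\rn)}\|\mathcal R\psi\|_{L^{q'(\cdot)}_{u^{-1}}(\rn)} \le 2C\|G\|_{L_u^{q(\cdot)}(\rn)},$$
and taking the supremum over admissible $\psi$ and undoing the $s$-th power finishes the proof.

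The main obstacle I anticipate is the off-diagonal H\"older manipulation in the last step when $p_0\ne s$: the factorisation of $F\,\mathcal R\psi$ must be arranged so that the $L^{p_0/s}(w)$ hypothesis and the associate-space duality are matched simultaneously, and the direction of H\"older differs according to whether $p_0>s$ or $p_0<s$, requiring the two sub-cases to be handled separately (in the latter, one replaces H\"older by a reverse-H\"older/Jensen substitute together with $A_{\infty}$-reverse-H\"older properties of $w=\mathcal R\psi$). A secondary technical point arises at the endpoint $s=p^{-}$, where $q^{-}=1$ and $q'(\cdot)$ may fall outside $\mathscr{P}$; in that borderline situation the duality in Step 2 must be replaced by an approximation argument, or $s$ must be perturbed slightly downward, which is permissible because the weight condition $v^{s}\in A_{p(\cdot)/s}$ is open in $s$.
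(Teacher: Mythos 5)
This lemma is quoted in the paper from Cruz--Uribe and Wang \cite[Theorem 2.24]{cu-w}; the paper gives no proof of its own, so your proposal can only be measured against the argument in that reference. Your overall architecture --- the power rule of Lemma \ref{lem.s-norm} to pass to $F=|f|^s$, $G=|g|^s$, $q(\cdot)=p(\cdot)/s$, $u=v^s$, then duality with the associate space, then the Rubio de Francia iteration $\mathcal R\psi=\sum_k M^k\psi/(2\mathsf N)^k$ built from the hypothesis that $M$ is bounded on $L^{(p(\cdot)/s)'}_{v^{-s}}(\rn)$, and finally the generalized H\"older inequality to close --- is exactly the right skeleton and matches the proof in the reference.

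There is, however, a genuine gap at the step where you convert the hypothesis at exponent $p_0/s$ into the inequality $\int F\,\mathcal R\psi\,dx\le C\int G\,\mathcal R\psi\,dx$ at exponent $1$. Your proposed factorisation $F\,\mathcal R\psi=(F\,w^{s/p_0})(\mathcal R\psi\,w^{-s/p_0})$ with $w=\mathcal R\psi$ and H\"older with exponents $p_0/s$ and $(p_0/s)'$ produces as second factor $\bigl(\int_{\rn}(\mathcal R\psi)^{(p_0/s)'-\frac{s}{p_0}(p_0/s)'}\,dx\bigr)^{1/(p_0/s)'}=\bigl(\int_{\rn}\mathcal R\psi\,dx\bigr)^{1-s/p_0}$, which is infinite in general ($M^k\psi$ decays no faster than $|x|^{-n}$), and ``reversing the H\"older step'' is not a legitimate deduction --- H\"older only gives an inequality in one direction. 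The missing ingredient is the $A_\infty$ extrapolation theorem of Cruz--Uribe, Martell and P\'erez: from the hypothesis at the single exponent $p_0$ for \emph{all} $w\in A_\infty$ one first deduces the same inequality at every exponent $0<p<\infty$ for all $w\in A_\infty$ (in particular at $p=s$, i.e.\ at exponent $1$ for the pair $(F,G)$). That exponent--shifting step is itself nontrivial, especially in the downward direction $p<p_0$, where it requires an auxiliary weight built from $M$ applied to $g$ (or the weighted maximal operator $M_w$) together with the fact that $(Mh)^\delta\in A_1$ for $\delta<1$; your suggestion to handle it by a ``reverse-H\"older/Jensen substitute'' does not supply this construction. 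Once the hypothesis is available at exponent $s$, the remainder of your argument is correct as written. (Your worry about the endpoint $s=p^-$ is a non-issue: the norm conjugate formula holds for any exponent with $q^-\ge 1$, and the boundedness of $M$ on $L^{(p(\cdot)/s)'}_{v^{-s}}(\rn)$ is assumed, not derived.)
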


To prove our result, we will need the following density property,
see \cite[Lemma 3.1]{cu-w}.

\begin{lemma}[\cite{cu-w}]    \label{lem.density}
Given $p(\cdot)\in \mathscr{P}$ and a weight
$v \in L_{\mathrm{loc}}^{p(\cdot)}(\rn)$, then $L_c^{\infty}(\rn)$,
the set of all bounded functions with compact support, is
dense in $L_v^{p(\cdot)}(\rn)$.
\end{lemma}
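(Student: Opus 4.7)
The plan is to approximate any $f \in L_v^{p(\cdot)}(\rn)$ by explicit truncations that are simultaneously bounded and compactly supported. For each positive integer $k$, set
$$f_k(x) = f(x) \chi_{E_k}(x), \quad E_k = \{x \in \rn : |x|\le k,\ |f(x)| \le k\},$$
so that $f_k$ is bounded by $k$ and supported in the closed ball $\overline{B(0,k)}$; hence $f_k \in L_c^{\infty}(\rn)$. Since $v \in L_{\mathrm{loc}}^{p(\cdot)}(\rn)$, the product $f_k v$ satisfies $|f_k v| \le k\, v\, \chi_{B(0,k)} \in L^{p(\cdot)}(\rn)$, so each $f_k$ actually lies in $L_v^{p(\cdot)}(\rn)$ and the inclusion $L_c^\infty(\rn) \subset L_v^{p(\cdot)}(\rn)$ is justified. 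What remains is to prove $\|(f - f_k)v\|_{L^{p(\cdot)}(\rn)} \to 0$ as $k \to \infty$.

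Setting $g_k := (f - f_k)v = fv\, \chi_{\rn\setminus E_k}$, the two key observations are the pointwise domination $|g_k| \le |fv| =: h \in L^{p(\cdot)}(\rn)$ and the pointwise convergence $g_k(x) \to 0$ almost everywhere (since $v > 0$ a.e. and $h < \infty$ a.e. force $|f| < \infty$ a.e., so $E_k$ exhausts $\rn$ up to a null set). The strategy is a dominated-convergence argument performed at the level of the modular $\rho(g) = \int_{\rn} |g(x)|^{p(x)}\,dx$, and then to transfer modular decay to Luxemburg-norm decay. Given any $\lambda > 0$, I first observe that $\rho(h/\lambda) < \infty$: choosing $\lambda_0$ with $\rho(h/\lambda_0) \le 1$ (available since $h \in L^{p(\cdot)}$) and using $p^+ < \infty$ yields $(h/\lambda)^{p(x)} \le (\lambda_0/\lambda)^{p^+}(h/\lambda_0)^{p(x)}$ for $\lambda \le \lambda_0$, which is integrable; for $\lambda \ge \lambda_0$ the bound is immediate. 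The classical dominated convergence theorem then gives $\rho(g_k/\lambda) \to 0$ for every fixed $\lambda > 0$.

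To finish, let $\varepsilon > 0$ and apply the previous step with $\lambda = \varepsilon$: for all sufficiently large $k$, $\rho(g_k/\varepsilon) \le 1$, which by the definition of the Luxemburg norm yields $\|g_k\|_{L^{p(\cdot)}(\rn)} \le \varepsilon$. Letting $\varepsilon \to 0$ gives $\|(f-f_k)v\|_{L^{p(\cdot)}(\rn)} \to 0$, completing the proof.

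The main subtlety I expect is the conversion from modular convergence to norm convergence, which genuinely needs $p^+ < \infty$ (provided by $p(\cdot) \in \mathscr{P}$) so that $\rho(h/\lambda)$ is finite \emph{for every} $\lambda>0$ rather than only for $\lambda$ sufficiently large; without this, one cannot conclude that the norm (an infimum over all admissible $\lambda$) becomes arbitrarily small. The hypothesis $v \in L_{\mathrm{loc}}^{p(\cdot)}(\rn)$ is used \emph{only} to ensure that the truncations $f_k$ actually belong to $L_v^{p(\cdot)}(\rn)$; the convergence step itself relies solely on the pointwise domination $|g_k| \le h$ together with $h \in L^{p(\cdot)}(\rn)$ and $p^+ < \infty$.
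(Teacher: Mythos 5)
The paper does not prove this lemma at all --- it is quoted verbatim from Cruz-Uribe and Wang (their Lemma 3.1) --- so there is no in-paper argument to compare against; your truncation-plus-dominated-convergence proof at the level of the modular is correct and is essentially the standard argument used in that reference. The two hypotheses are deployed exactly where they must be ($v\in L_{\mathrm{loc}}^{p(\cdot)}$ to place the truncations $f_k$ in the space, $p^+<\infty$ to make $\rho(h/\lambda)$ finite for \emph{every} $\lambda>0$ and hence to upgrade modular decay to norm decay), and the only point worth spelling out further is the existence of $\lambda_0$ with $\rho(h/\lambda_0)\le 1$, which follows from $\rho(h/\lambda_1)<\infty$ for some $\lambda_1$ together with $p^->0$.
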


The following monotone convergence theorem in variable
exponent Lebesgue spaces is due to \cite[Lemma 2.5]{cu-w2014}.

\begin{lemma}[\cite{cu-w2014}] \label{lem.monotone}
Suppose $p(\cdot) \in  \mathscr{P}_0$. Given a sequence $\{f_k\}$
of $L^{p(\cdot)}(\rn)$ functions that increases pointwise almost
everywhere to a function $f$, we have
$$\lim_{k\to\infty}\|f_k\|_{L^{p(\cdot)}(\rn)}
   =\|f\|_{L^{p(\cdot)}(\rn)}.
$$
\end{lemma}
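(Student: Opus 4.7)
The plan is to mimic the classical proof of the monotone convergence theorem, adapted to the Luxemburg-type definition of the $L^{p(\cdot)}$-norm. Since the norm depends only on $|f_k|$ and the hypothesis $0\le f_k\uparrow f$ a.e.\ (the natural and relevant setting) yields $|f_k|\uparrow |f|$ a.e., I will assume $f_k\ge 0$ throughout. Write $\rho(g):=\int_{\rn}|g(x)|^{p(x)}\,dx$ for the modular, so that
$$\|g\|_{L^{p(\cdot)}(\rn)}=\inf\bigl\{\lambda>0:\rho(g/\lambda)\le 1\bigr\}.$$

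First I would prove the easy half: $L:=\lim_{k\to\infty}\|f_k\|_{L^{p(\cdot)}(\rn)}$ exists in $[0,\infty]$ and satisfies $L\le\|f\|_{L^{p(\cdot)}(\rn)}$. The point is that $\rho$ is monotone in $|g|$, hence so is the Luxemburg norm: if $0\le g\le h$ and $\rho(h/\lambda)\le 1$, then $\rho(g/\lambda)\le 1$ as well, so $\|g\|_{L^{p(\cdot)}}\le\lambda$; taking the infimum over such $\lambda$ yields $\|g\|_{L^{p(\cdot)}}\le \|h\|_{L^{p(\cdot)}}$. Applying this to $f_k\le f_{k+1}\le f$ shows $\{\|f_k\|_{L^{p(\cdot)}}\}$ is nondecreasing and bounded above by $\|f\|_{L^{p(\cdot)}}$, which gives the claim.

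For the reverse inequality $\|f\|_{L^{p(\cdot)}(\rn)}\le L$, I would invoke the classical scalar monotone convergence theorem applied to the integrands. If $L=\infty$ there is nothing to prove, so assume $L<\infty$ and fix an arbitrary $\lambda>L$. By definition of $L$, there exists $k_0$ such that $\|f_k\|_{L^{p(\cdot)}}<\lambda$, and hence $\rho(f_k/\lambda)\le 1$, for all $k\ge k_0$. Because $\bigl(f_k(x)/\lambda\bigr)^{p(x)}$ is a nondecreasing sequence of nonnegative measurable functions converging pointwise a.e.\ to $\bigl(f(x)/\lambda\bigr)^{p(x)}$, the classical MCT gives
$$\rho(f/\lambda)=\lim_{k\to\infty}\rho(f_k/\lambda)\le 1,$$
so $\|f\|_{L^{p(\cdot)}}\le\lambda$. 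Letting $\lambda\downarrow L$ finishes the proof.

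There is no real obstacle here beyond carefully separating the two inequalities: the upper bound on $L$ comes purely from the monotonicity of the Luxemburg norm in $|g|$, while the lower bound reduces cleanly, via the finiteness of $p^+$ and the definition of the norm, to the classical MCT applied pointwise at each admissible scale $\lambda>L$. The hypothesis $p(\cdot)\in\mathscr{P}_0$ (i.e.\ $0<p^-\le p^+<\infty$) is used only implicitly, to ensure that $\rho$ is finite on admissible scalings and that the usual properties of the Luxemburg functional hold.
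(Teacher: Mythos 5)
Your proof is correct. The paper itself gives no argument for this lemma --- it is quoted verbatim from Cruz--Uribe and Wang \cite{cu-w2014} (their Lemma 2.5, stated there for nonnegative functions, which is exactly the reduction you make and is the form actually used in the paper, where the sequence is $T_k(\vec f)\,v\ge 0$). Your two-step argument --- monotonicity of the Luxemburg norm for $L\le\|f\|_{L^{p(\cdot)}(\rn)}$, and the classical monotone convergence theorem applied to the modular at each scale $\lambda>L$ for the reverse inequality --- is the standard proof and coincides with the one in the cited reference.
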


We also need the following result for weights with variable exponents.

\begin{lemma} \label{lem.weight}
Let $p(\cdot) \in \mathscr{P}_0$ and $p_1(\cdot), \cdots,
p_m(\cdot) \in \mathscr{P}$ with ${1}/{p(\cdot)}
={1}/{p_1(\cdot)} +\cdots +{1}/{p_m(\cdot)}$.
For $v_j \in {A_{p_j(\cdot)}}$, $j=1,\cdots,m$, let $v=\prod_{j=1}^m v_j$.
Then $v^{1/m} \in {A_{mp(\cdot)}}$.
\end{lemma}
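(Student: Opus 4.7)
The plan is to verify the $A_{mp(\cdot)}$ condition for $v^{1/m}$ directly, by decomposing $v^{1/m} = \prod_{j=1}^m v_j^{1/m}$ and then splitting the two norms appearing in the definition into $m$ factors via the generalized H\"older inequality, after which the homogeneity Lemma \ref{lem.s-norm} reduces each factor to the corresponding $A_{p_j(\cdot)}$ constant.

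First I would check the two exponent identities that make the scheme work. From $1/p(\cdot) = \sum_{j=1}^m 1/p_j(\cdot)$ one immediately gets
\[
\frac{1}{mp(\cdot)} = \sum_{j=1}^m \frac{1}{mp_j(\cdot)},
\]
and, since $1/q'(\cdot) = 1 - 1/q(\cdot)$ with the convention used in the paper,
\[
\frac{1}{(mp(\cdot))'} = 1 - \frac{1}{mp(\cdot)}
 = \sum_{j=1}^m \frac{1}{m}\left(1 - \frac{1}{p_j(\cdot)}\right)
 = \sum_{j=1}^m \frac{1}{mp_j'(\cdot)}.
\]
These two decompositions let me apply Lemma \ref{lem.gholder-m} with exponents $\{mp_j(\cdot)\}_{j=1}^m$ and $\{mp_j'(\cdot)\}_{j=1}^m$ respectively.

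Next, fix a ball $B \subset \rn$. Using $\chi_B = \chi_B^{1/m}$, Lemma \ref{lem.gholder-m} yields
\[
\|v^{1/m}\chi_B\|_{L^{mp(\cdot)}(\rn)}
 \le C \prod_{j=1}^m \|v_j^{1/m}\chi_B\|_{L^{mp_j(\cdot)}(\rn)},
\qquad
\|v^{-1/m}\chi_B\|_{L^{(mp(\cdot))'}(\rn)}
 \le C \prod_{j=1}^m \|v_j^{-1/m}\chi_B\|_{L^{mp_j'(\cdot)}(\rn)}.
\]
Then Lemma \ref{lem.s-norm}, applied with $s = 1/m$ to the functions $v_j\chi_B$ and $v_j^{-1}\chi_B$, gives
\[
\|v_j^{1/m}\chi_B\|_{L^{mp_j(\cdot)}(\rn)} = \|v_j\chi_B\|^{1/m}_{L^{p_j(\cdot)}(\rn)},
\qquad
\|v_j^{-1/m}\chi_B\|_{L^{mp_j'(\cdot)}(\rn)} = \|v_j^{-1}\chi_B\|^{1/m}_{L^{p_j'(\cdot)}(\rn)}.
\]

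Finally, writing $|B|^{-1} = \prod_{j=1}^m |B|^{-1/m}$ and multiplying the two displayed estimates, I would obtain
\[
\begin{split}
|B|^{-1}\,\|v^{1/m}\chi_B\|_{L^{mp(\cdot)}(\rn)}\,\|v^{-1/m}\chi_B\|_{L^{(mp(\cdot))'}(\rn)}
 &\le C \prod_{j=1}^m \Bigl(|B|^{-1}\,\|v_j\chi_B\|_{L^{p_j(\cdot)}(\rn)}\,\|v_j^{-1}\chi_B\|_{L^{p_j'(\cdot)}(\rn)}\Bigr)^{1/m}\\
 &\le C \prod_{j=1}^m [v_j]_{A_{p_j(\cdot)}}^{1/m}.
\end{split}
\]
Taking the supremum over all balls $B$ yields $[v^{1/m}]_{A_{mp(\cdot)}} \le C \prod_{j=1}^m [v_j]_{A_{p_j(\cdot)}}^{1/m} < \infty$, which is the desired conclusion. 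The only nontrivial point is the exponent bookkeeping at the start; once that is checked, everything else is a clean application of the two tools (Lemmas \ref{lem.s-norm} and \ref{lem.gholder-m}) already recorded above.
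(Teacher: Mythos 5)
Your proof is correct and follows essentially the same route as the paper's: the same exponent decomposition $\frac{1}{(mp(\cdot))'}=\sum_j \frac{1}{mp_j'(\cdot)}$, followed by the generalized H\"older inequality (Lemma \ref{lem.gholder-m}) and the homogeneity identity (Lemma \ref{lem.s-norm}) to reduce to the $A_{p_j(\cdot)}$ constants. The only cosmetic difference is that you spell out the explicit quantitative bound $[v^{1/m}]_{A_{mp(\cdot)}} \le C\prod_j [v_j]_{A_{p_j(\cdot)}}^{1/m}$ and the dual-exponent computation, both of which the paper leaves implicit.
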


\begin{proof}
Since $p(\cdot) \in \mathscr{P}_0$, $p_1(\cdot), \cdots, p_m(\cdot)
\in \mathscr{P}$ and
${1}/{p(\cdot)} ={1}/{p_1(\cdot)} +\cdots +{1}/{p_m(\cdot)}$
then $mp(\cdot) \in \mathscr{P}$ and
$$\frac1{(mp(\cdot))'}=\sum_{j=1}^m\frac1{mp'_j(\cdot)}.
$$

By the generalized H\"older's inequality (Lemma \ref{lem.gholder-m}) and
Lemma \ref{lem.s-norm}, we have
\begin{equation*}
\begin{split}
&|B|^{-1} \big\|v^{1/m}\chi_B \big\|_{L^{mp(\cdot)}(\rn)}
    \big\|v^{-1/m}\chi_B\big\|_{L^{(mp(\cdot))'}(\rn)}\\
&\le {C}|B|^{-1} \prod_{j=1}^m\big\| v_j^{1/m}\chi_B\big\|_{L^{mp_j(\cdot)}(\rn)}
   \prod_{j=1}^m \big\|v_j^{-1/m}\chi_B\big\|_{L^{mp'_j(\cdot)}(\rn)}\\
&\le {C}|B|^{-1} \prod_{j=1}^m\big\| v_j\chi_B\big\|^{1/m}_{L^{p_j(\cdot)}(\rn)}
   \prod_{j=1}^m \big\|v_j^{-1}\chi_B\big\|^{1/m}_{L^{p'_j(\cdot)}(\rn)}\\
&={C}\prod_{j=1}^m\bigg(|B|^{-1} \big\| v_j\chi_B\big\|_{L^{p_j(\cdot)}(\rn)}
   \big\|v_j^{-1}\chi_B\big\|_{L^{p'_j(\cdot)}(\rn)}\bigg)^{1/m}\\
&<\infty,
\end{split}
\end{equation*}
where the last step follows from $v_j \in {A_{p_j(\cdot)}}$, $j=1,\cdots,m$.
This concludes the proof.
\end{proof}

Now, we have all the ingredients to prove Theorem \ref{thm.1.3}.

\begin{proof}{\bf {of Theorem \ref{thm.1.3}}}\ \
By Lemma \ref{lem.density} it suffices to prove Theorem
\ref{thm.1.3} for all bounded functions $f_1,\cdots,f_m$
with compact support.

We define a sequence of operators $\{T_k\}_{k \in \mathbb{N}}$,
where $T_k(\vec{f})=\min\{|T_{\Pi\vec{b}}(\vec{f})|,k\}\chi_{B(0,k)}$
and $T_k(\vec{f})=0$ when $x\notin {B(0,k)}$,
and consider the following family
$$\mathcal{F}=\big\{\big(T_k(\vec{f}),
{\cal{M}}_{L(\log{L})}(\vec{f})\big): \vec{f}=(f_1, \cdots, f_m)
\in \big(L_c^{\infty}(\rn)\big)^m, k\in\mathbb{N}\big\}.
$$

It follows from Proposition \ref{prop.2} that, for any
$0<p_0<\infty$ and every $w\in {A_{\infty}}$,
$$\int_{\rn}\big|T_{\Pi\vec{b}}(\vec{f})(x)\big|^{p_0}w(x)dx
\le {C}\int_{\rn} \big[{\cal{M}}_{L(\log{L})}(\vec{f})(x)
\big]^{p_0}w(x)dx
$$
holds for all bounded functions $f_1,\cdots,f_m$ with compact
support.

Since $0\le T_k(\vec{f})(x) \le |T_{\Pi\vec{b}}(\vec{f})(x)|$,
then for any $0<p_0<\infty$ and every $w\in {A_{\infty}}$,
$$\int_{\rn}\big[T_k(\vec{f})(x)\big]^{p_0}w(x)dx
\le {C}\int_{\rn} \big[{\cal{M}}_{L(\log{L})}(\vec{f})(x)
\big]^{p_0}w(x)dx
$$
holds for every ordered pair
$\big(T_k(\vec{f}), {\cal{M}}_{L(\log{L})}(\vec{f})\big)$
in $\mathcal{F}$.

By Lemma \ref{lem.weight} we have $v^{1/m} \in {A_{mp(\cdot)}}$,
which implies $v^{-1/m} \in {A_{(mp(\cdot))'}}$.
Since $p(\cdot)$ satisfies (\ref{equ.log1})
and (\ref{equ.log2}) then $mp(\cdot)$ and $(mp(\cdot))'$
also satisfy (\ref{equ.log1}) and (\ref{equ.log2}).
Note that $mp(\cdot)\in \mathscr{P}$, then, it follows from
Lemma \ref{lem.c-f-n} that
$M$ is bounded on $L^{(mp(\cdot))'}_{v^{-1/m}}(\rn)$.

So, to use Lemma \ref{lem.extra} for all ordered pairs in
$\mathcal{F}$, we need to check that
$\big\|T_k(\vec{f})\big\|_{L_v^{p(\cdot)}(\rn)}<\infty$
for every
$\vec{f}=(f_1,\cdots,f_m) \in \big(L_c^{\infty}(\rn)\big)^m$.
It is obviously always the case.
Indeed, since $v^{1/m} \in {A_{mp(\cdot)}}$ then
$v^{1/m} \in {L_{\mathrm{loc}}^{mp(\cdot)}(\rn)}$.
This together with Lemma \ref{lem.s-norm} gives
$v \in {L_{\mathrm{loc}}^{p(\cdot)}(\rn)}$, which implies
$\|v\chi_{B(0,k)}\|_{L^{p(\cdot)}(\rn)}<\infty$ for
each integer $k$. Thus
$$\big\|T_k(\vec{f})\big\|_{L_v^{p(\cdot)}(\rn)}
 =\big\|T_k(\vec{f})v\big\|_{L^{p(\cdot)}(\rn)}
 \le k\|v\chi_{B(0,k)}\|_{L^{p(\cdot)}(\rn)}<\infty.
$$

Now, we can apply Lemma \ref{lem.extra}
for $s=1/m$ to each pair in $\mathcal{F}$ and get
\begin{equation} \label{equ.proof1.3-1}
\big\|T_k(\vec{f})\big\|_{L_v^{p(\cdot)}(\rn)} \le
{C}\big\|{\cal{M}}_{L(\log{L})}(\vec{f})\big\|_{L_v^{p(\cdot)}(\rn)}.
\end{equation}

Recall the pointwise equivalence $M_{L(\log{L})}(g)(x)\approx
{M^2(g)(x)}$ for any locally integrable function $g$ (see (21) in
\cite{pe}), we have
$${\mathcal{M}}_{L(\log{L})}(\vec{f})(x)\le
\prod_{j=1}^{m}M_{L(\log{L})}(f_j)(x)
\le {C}\prod_{j=1}^{m} M^2(f_j)(x).
$$
Then, it follows from (\ref{equ.proof1.3-1}) and the
generalized H\"older's inequality (Lemma \ref{lem.gholder-m})
that
$$\big\|T_k(\vec{f})\big\|_{L_v^{p(\cdot)}(\rn)}
\le {C}\bigg\|\prod_{j=1}^{m}M^2(f_j) \bigg\|_{L_v^{p(\cdot)}(\rn)}
 \le {C}\prod_{j=1}^{m}\big\|M^2(f_j)\big\|_{L_{v_j}^{p_j(\cdot)}(\rn)}.
$$

Since $p_j(\cdot) \in \mathscr{P}$ and satisfies
(\ref{equ.log1}) and (\ref{equ.log2}),
and $v_j\in {A_{p_j(\cdot)}}$ for $j=1,\cdots,m$, then, by
applying Lemma \ref{lem.c-f-n} twice, we have
\begin{equation}  \label{equ.proof1.3-2}
\big\|T_k(\vec{f})\big\|_{L_v^{p(\cdot)}(\rn)}
 \le {C}\prod_{j=1}^{m} \|f_j\|_{L_{v_j}^{p_j(\cdot)}(\rn)}.
\end{equation}

Note that $\{T_k(\vec{f})(x)v(x)\}$ increases pointwise
almost everywhere to
$T_{\Pi\vec{b}}(\vec{f})(x)v(x)$ and $T_k(\vec{f})v \in {L^{p(\cdot)}(\rn)}$,
then,  Lemma \ref{lem.monotone} together with (\ref{equ.proof1.3-2}) gives
$$\|T_{\Pi\vec{b}}(\vec{f})\|_{L_v^{p(\cdot)}(\rn)}
 =\|T_{\Pi\vec{b}}(\vec{f})v\|_{L^{p(\cdot)}(\rn)}
 =\lim_{k\to\infty} \|T_k(\vec{f})v\|_{L^{p(\cdot)}(\rn)}
 \le {C}\prod_{j=1}^{m} \|f_j\|_{L_{v_j}^{p_j(\cdot)}(\rn)}.
$$

So complete the proof of Theorem \ref{thm.1.3}.
\end{proof}

\section{Applications}   \label{application}

In this section, we give some applications of the results obtained
above to paraproducts and bilinear pseudo-differential operators
with mild regularity.

\subsection{Paraproducts with mild regularity}

For $v\in {\mathbb{Z}}$ and $\kappa=(k_1,\cdots,k_n) \in
{\mathbb{Z}}^n$, let $P_{v\kappa}$ be the dyadic cube
$$P_{v\kappa}:=\big\{ (x_1,\cdots, x_n)\in\rn: k_i\le 2^{v}x_i
<k_i+1, ~ i=1,\cdots,n\big\}.
$$
The lower left-corner of $P:=P_{v\kappa}$ is $x_P=x_{v\kappa}:=
2^{-v\kappa}$ and the Lebesgue measure of $P$ is $|P|=2^{-vn}$. We
set
$${\cal{D}}=\big\{P_{v\kappa}: v\in {\mathbb{Z}},~ \kappa \in
{\mathbb{Z}}^n\big\}
$$
as the collection of all dyadic cubes.

\begin{definition}[\cite{mn}]
Let $\theta: [0,\infty)\to [0,\infty)$ be a nondecreasing and
concave function. An $\theta$-molecule associated to a dyadic cube
$P=P_{v\kappa}$ is a function $\phi_{P}=\phi_{v\kappa}: \rn \to
{\mathbb{C}}$ such that, for some $A_0>0$ and $N>n$, it satisfies
the decay condition
\begin{equation*}   
|\phi_P(x)|\le \frac{{A_0}2^{vn/2}}{(1+2^v|x-x_P|)^N},~~ x\in\rn,
\end{equation*}
and the mild regularity condition
\begin{equation*}   
\begin{split}
|\phi_{P}(x)-\phi_{P}(y)|\le {A_0}2^{vn/2}\theta(2^v|x-y|)
\left[\frac{1}{(1+2^v|x-x_P|)^N} +\frac{1}{(1+2^v|y-x_P|)^N}\right]
\end{split}
\end{equation*}
for all $x,y \in \rn$.
\end{definition}

\begin{definition}[\cite{mn}]
Given three families of $\theta$-molecules
$\{\phi_Q^j\}_{Q\in{\cal{D}}}$, $j=1,2,3$, the paraproduct
$\Pi(f,g)$ associated to these families is defined by
\begin{equation*}   
\Pi(f,g)=\sum_{Q\in{\cal{D}}} |Q|^{-1/2}
\big\langle{f,\phi_Q^1}\big\rangle
\big\langle{g,\phi_Q^2}\big\rangle \phi_Q^3, ~~~  f,g\in
{\mathscr{S}}(\rn).
\end{equation*}
\end{definition}

In \cite{mn}, some sufficient conditions on $\theta$ were given so
that the paraproducts defined above can be realized as bilinear
$\omega$-CZOs. The following result was proved in \cite[Theorem
5.3]{mn} when $\theta\in {Dini(1/2)}$. Indeed, the condition
$\theta\in {Dini(1/2)}$ can be reduced to $\theta\in {Dini(1)}$, see
Lemmas 8.2 and 8.3 in \cite{lz} for details.

\begin{lemma} \label{lem.mn.th5.3}
Let $\theta$ be concave and $\theta\in {Dini(1)}$, and let
$\{\phi_Q^j\}_{Q\in{\cal{D}}}$, $j=1,2,3$, be three families of
$\theta$-molecules with decay $N>10n$ and such that at least two of
them, say $j=1,2$, enjoy the following cancelation property
$$\int_{\rn}\phi_Q^j(x)dx=0, ~~ Q\in{\cal{D}}, ~ j=1,2,
$$
then $\Pi$ is a bilinear Calder\'on-Zygmund operator of type
$\omega$ with $\omega(t)=A_0^3A_{N}\theta(C_{N}t)$ and
$\tau=1/2$, where $A_{N}$ and $C_{N}$ are constants depending on
$N$.
\end{lemma}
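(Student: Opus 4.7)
The plan is to verify the two ingredients required by Definition \ref{def.1.1}: (i) the kernel bounds \eqref{equ.k1}--\eqref{equ.k3} with $\omega(t)=A_0^3 A_N \theta(C_N t)$ and $\tau=1/2$, and (ii) the boundedness of $\Pi$ from some $L^{q_1}(\rn)\times L^{q_2}(\rn)$ to $L^{q}(\rn)$. By duality against a Schwartz test function one is led to the candidate kernel
\[
K(x,y_1,y_2)=\sum_{Q\in{\cal{D}}}|Q|^{-1/2}\,\phi_Q^1(y_1)\,\phi_Q^2(y_2)\,\phi_Q^3(x),
\]
and I would decompose this sum by dyadic scale $v$ (so $|Q|=2^{-vn}$), estimating each scale $K_v$ separately before summing in $v$.

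For \eqref{equ.k1} I would use the decay hypothesis on all three molecules to obtain, for each cube $Q$ of scale $v$, the pointwise bound
\[
|Q|^{-1/2}\bigl|\phi_Q^1(y_1)\phi_Q^2(y_2)\phi_Q^3(x)\bigr|\le \frac{A_0^3\,2^{2vn}}{(1+2^v|y_1-x_Q|)^N(1+2^v|y_2-x_Q|)^N(1+2^v|x-x_Q|)^N}.
\]
A Schur-type cube-counting argument (using $N>10n$) bounds the sum over $Q$ at scale $v$ by a constant times $2^{2vn}$ divided by a suitable power of $1+2^v(|x-y_1|+|x-y_2|)$; splitting in $v$ at the critical scale $2^v\sim(|x-y_1|+|x-y_2|)^{-1}$ yields the required $(|x-y_1|+|x-y_2|)^{-2n}$. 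For \eqref{equ.k2} I would subtract the kernel at $x'$ and apply the mild-regularity bound on $\phi_Q^3$, which inserts a factor $\theta(2^v|x-x'|)$. Under the constraint $|x-x'|\le\tfrac12\max_j|x-y_j|$, I would split the scale sum again at the critical scale: at fine scales the regularity factor dominates while the decay factors collapse, and at coarse scales I would exploit monotonicity of $\theta$ together with kernel decay; the $Dini(1)$ hypothesis is precisely what makes both partial sums converge and collect into $A_0^3 A_N\,\theta(C_N|x-x'|/(|x-y_1|+|x-y_2|))$. The estimate \eqref{equ.k3} follows by the same mechanism, with $\phi_Q^1$ or $\phi_Q^2$ taking the role that $\phi_Q^3$ played above.

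For part (ii) I would use the cancellation $\int_\rn\phi_Q^j=0$ for $j=1,2$, which was not needed for the kernel estimates but is crucial here. Pairing $\Pi(f,g)$ against a test function $h\in L^{q'}$ gives
\[
\langle \Pi(f,g),h\rangle=\sum_{Q\in{\cal{D}}}|Q|^{-1/2}\langle f,\phi_Q^1\rangle\langle g,\phi_Q^2\rangle\langle h,\phi_Q^3\rangle,
\]
and a standard almost-orthogonality/Carleson-measure argument — available precisely because two of the three factors are mean-zero molecules, so the associated Littlewood--Paley square functions are bounded — produces the estimate $\|\Pi(f,g)\|_{L^1}\lesssim\|f\|_{L^2}\|g\|_{L^2}$, which together with the kernel bounds establishes $\Pi$ as a bilinear $\omega$-CZO.

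The main obstacle is the scale-sum bookkeeping in the smoothness estimates: the mild-regularity bound only pays at fine scales, molecule decay only at coarse scales, and the two regimes must be joined at the critical scale in a way that compounds the $\theta$-factors into a single $\theta(C_N\cdot)$ on the outside. Tracking the dependence on $N$ carefully — so that the geometric-series contributions at both tails get absorbed into a constant $A_N$ and the final bound takes the precise form $A_0^3 A_N\theta(C_N t)$ rather than a $\theta$-convolution — is the most delicate part, and it is exactly where the choice $\tau=1/2$ becomes convenient, since it keeps $|x-x'|$ safely below the critical threshold and prevents the two scale regimes from overlapping.
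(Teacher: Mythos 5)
The paper does not prove this lemma; it simply quotes it. The statement for $\theta\in Dini(1/2)$ is taken verbatim from Maldonado and Naibo \cite[Theorem 5.3]{mn}, and the only comment the paper makes is that the hypothesis can be relaxed to $\theta\in Dini(1)$ via Lemmas 8.2 and 8.3 of Lu and Zhang \cite{lz}. So there is no internal argument to compare against; you are attempting a from-scratch reproof of a cited result.

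Your outline is a plausible reconstruction of the strategy, but it stays at the level of a to-do list and, in one place, misattributes the role of the hypotheses. You claim that "the $Dini(1)$ hypothesis is precisely what makes both partial sums converge and collect into $A_0^3 A_N\theta(C_N|x-x'|/(|x-y_1|+|x-y_2|))$." This is not the mechanism. At fine scales, where $2^v h$ is not small (say $2^v\gtrsim d^{-1}$ with $h=|x-x'|$ and $d=|x-y_1|+|x-y_2|$), the regularity factor $\theta(2^v h)$ offers no smallness, and one must instead use the molecular decay; to still pull out a $\theta(h/d)$ factor one needs \emph{concavity} of $\theta$ — for concave $\theta$ with $\theta(0)=0$ the ratio $\theta(s)/s$ is nonincreasing, so $\theta(2^v h)\le\theta(h/d)\cdot(2^v d)$, and the extra factor $2^v d$ is killed by the decay $N>10n$. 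At coarse scales monotonicity alone gives $\theta(2^v h)\le\theta(h/d)$. Thus the $\theta(C_N t)$ shape of $\omega$ comes from concavity and monotonicity; the Dini condition enters elsewhere (notably so that the resulting $\omega$ is itself in $Dini(1)$, and in the almost-orthogonality estimates underlying the $L^2\times L^2\to L^1$ bound). Also, in your boundedness step a Cauchy--Schwarz in $Q$ and two square-function bounds will not suffice by themselves: the third family $\{\phi_Q^3\}$ has no cancellation, so you must supplement the two square-function estimates with a Carleson-embedding or maximal-function bound on the non-cancellative coefficient map. These are genuine (if repairable) gaps in the sketch as written.
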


Let ${\vec{b}}=(b_1, b_2) \in {BMO^2}$, the iterated commutator of
$\Pi$ with $\vec{b}$ is defined by
\begin{equation*}
\begin{split}
\Pi_{\Pi\vec{b}}(f_1,f_2)(x) &= [b_1,[b_2,T]_2,]_1(f_1,f_2)(x)\\
& =b_1(x)b_2(x)\Pi(f_1,f_2)(x)-b_2(x)\Pi(b_1f_1,f_2)(x) \\
&\qquad\quad -b_1(x)\Pi(f_1,b_2f_2)(x) + \Pi(b_1f_1,b_2f_2)(x).
\end{split}
\end{equation*}

Note that if $\theta\in {Dini(1)}$ and satisfies
\begin{equation}   \label{equ.ldini-2}
\int_0^1\frac{\theta(t)}{t}\bigg(1+\log\frac1t\bigg)^2 dt<\infty
\end{equation}
then $\omega(t)=A_0^3A_{N}\theta(C_{N}t)$ also belongs to $Dini(1)$
and satisfies (\ref{equ.ldini-2}). So, the following estimates for
iterated commutator $\Pi_{\Pi\vec{b}}$ are direct consequences of
Theorems \ref{thm.1.1}, \ref{thm.1.2} and \ref{thm.1.3} and Lemma
\ref{lem.mn.th5.3}.

\begin{theorem} \label{thm.cpara1}
Let $\theta$ and $\phi_Q^j$ be the same as in Lemma
\ref{lem.mn.th5.3}. Assume that $\theta$ satisfy (\ref{equ.ldini-2}).
If $\vec{b}\in {BMO^2}$ and ${\vec{w}}\in {A_{\vec{P}}}$ with
$1<p_1,p_2<\infty$ and $1/p=1/p_1+1/p_2$, then there exists a
constant $C>0$ such that
$$\big\|\Pi_{\Pi\vec{b}}(f_1,f_2)\big\|_{L^p(\nu_{\vec{w}})}\le
{C}\|b_1\|_{BMO}\|b_2\|_{BMO}
\|f_1\|_{L^{p_1}(w_1)}\|f_2\|_{L^{p_2}(w_2)}.
$$
\end{theorem}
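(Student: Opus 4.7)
The plan is to reduce the statement to Theorem \ref{thm.1.1}. First I would invoke Lemma \ref{lem.mn.th5.3}, whose hypotheses are satisfied here (the families $\{\phi_Q^j\}$ are $\theta$-molecules with decay $N>10n$ and cancelation for $j=1,2$, $\theta$ is concave, and (\ref{equ.ldini-2}) trivially gives $\theta\in Dini(1)$), to realize $\Pi$ as a bilinear Calder\'on-Zygmund operator of type $\omega$ with $\omega(t)=A_0^3 A_N\theta(C_N t)$. This supplies in particular the multilinear boundedness on some product of $L^{q_j}$ spaces required by Definition \ref{def.1.1}, so $\Pi$ is indeed a bilinear $\omega$-CZO in the sense of the paper.

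The only genuine verification is that this $\omega$ inherits condition (\ref{equ.ldini}) with $m=2$ from the assumption (\ref{equ.ldini-2}) on $\theta$. I would do this by the change of variables $s=C_N t$:
\begin{equation*}
\int_0^1\frac{\omega(t)}{t}\left(1+\log\frac{1}{t}\right)^2 dt = A_0^3 A_N \int_0^{C_N}\frac{\theta(s)}{s}\left(1+\log\frac{C_N}{s}\right)^2 ds.
\end{equation*}
For $s\in(0,1)$ the factor $1+\log(C_N/s)$ is comparable to $1+\log(1/s)$ since $\log C_N$ is a fixed constant, so the contribution from $(0,1)$ is finite by (\ref{equ.ldini-2}). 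On $[1,C_N]$ both $\theta$ (being nondecreasing on a bounded interval) and the logarithm are bounded, so the remaining piece contributes finitely. Hence $\omega$ satisfies (\ref{equ.ldini}) with $m=2$.

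Having identified $\Pi$ as a bilinear $\omega$-CZO with $\omega$ satisfying (\ref{equ.ldini}), Theorem \ref{thm.1.1} applied with $\vec{b}=(b_1,b_2)\in BMO^2$ and $\vec{w}\in A_{\vec{P}}$ delivers the conclusion verbatim, including the bilinear dependence $\|b_1\|_{BMO}\|b_2\|_{BMO}$. I do not foresee any real obstacle: Lemma \ref{lem.mn.th5.3} shoulders the entire burden of the kernel analysis, and Theorem \ref{thm.1.1} shoulders the burden of the weighted estimate, so the application reduces purely to checking that the Dini-type integrability with logarithmic gain is preserved under the affine rescaling $t\mapsto C_N t$ in the modulus of continuity.
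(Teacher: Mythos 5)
Your argument is correct and follows exactly the route the paper takes: invoke Lemma \ref{lem.mn.th5.3} to realize $\Pi$ as a bilinear $\omega$-CZO with $\omega(t)=A_0^3A_N\theta(C_Nt)$, check that $\omega$ inherits condition (\ref{equ.ldini}) with $m=2$ from (\ref{equ.ldini-2}), and then apply Theorem \ref{thm.1.1}. The paper merely asserts the invariance of the log-Dini condition under the rescaling $t\mapsto C_Nt$, whereas you spell out the change of variables; this is a sound and welcome addition.
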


\begin{theorem} \label{thm.cpara2}
Let $\theta$ and $\phi_Q^j$ be the same as in
Theorem \ref{thm.cpara1}. If $\vec{b}\in {BMO^2}$
and $\vec{w} \in {A_{(1,1)}}$, then there is a constant
$C>0$ depending on $\|\vec{b}\|_{BMO^2}$, such that
for all $\lambda>0$,
$$\nu_{\vec{w}}\left(\left\{ x\in \rn: \big|\Pi_{\Pi\vec{b}}(f_1,f_2)(x)
\big|>\lambda^2 \right\}\right) \le {C} \prod_{j=1}^2 \left(
\int_{\rn}\Phi^{(2)}\bigg(\frac{|f_j(x)|}{\lambda}\bigg)
w_j(x)dx\right)^{1/2}.
$$
\end{theorem}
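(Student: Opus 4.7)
The plan is to deduce Theorem \ref{thm.cpara2} as an immediate corollary of Theorem \ref{thm.1.2} once one identifies $\Pi$ with a bilinear $\omega$-CZO and verifies the kernel-regularity hypothesis of that theorem. Thus, the proof proceeds in three short steps: realize $\Pi$ as a bilinear $\omega$-CZO via Lemma \ref{lem.mn.th5.3}, transfer the integral condition (\ref{equ.ldini-2}) on $\theta$ to the condition (\ref{equ.ldini}) (with $m=2$) on $\omega$, then invoke Theorem \ref{thm.1.2}.

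First, by Lemma \ref{lem.mn.th5.3}, the hypotheses imposed on $\theta$ and on the three families $\{\phi_Q^j\}_{Q\in{\cal{D}}}$ (which Theorem \ref{thm.cpara2} inherits from Theorem \ref{thm.cpara1}) guarantee that $\Pi$ is a bilinear Calder\'on-Zygmund operator of type $\omega$ with
\[
\omega(t)=A_0^3 A_N\,\theta(C_N t),
\]
where $A_N, C_N$ are constants depending on $N$. Consequently $\Pi_{\Pi\vec{b}}$ is the iterated commutator (in the sense of (\ref{equ.1.2}) with $m=2$) of a bilinear $\omega$-CZO with this particular $\omega$.

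Second, I would check that this $\omega$ satisfies (\ref{equ.ldini}) with $m=2$. By the change of variables $s=C_N t$,
\[
\int_0^1\frac{\omega(t)}{t}\!\left(1+\log\frac{1}{t}\right)^{\!2}\!dt
=A_0^3 A_N\!\int_0^{C_N}\!\frac{\theta(s)}{s}\!\left(1+\log\frac{C_N}{s}\right)^{\!2}\!ds.
\]
If $C_N\le 1$ this is directly controlled by the integral in (\ref{equ.ldini-2}). If $C_N>1$, split at $s=1$: on $[0,1]$ use $(1+\log(C_N/s))^2\lesssim (1+\log(1/s))^2+(\log C_N)^2$, reducing the estimate to (\ref{equ.ldini-2}) plus the weaker $Dini(1)$ bound on $\theta$; on $[1,C_N]$ the integrand is bounded since $\theta$ is nondecreasing and $s$ is bounded below. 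Hence $\omega$ satisfies (\ref{equ.ldini}).

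Third, with these two ingredients in hand, apply Theorem \ref{thm.1.2} to $\Pi_{\Pi\vec{b}}$ with $m=2$ and $\vec{w}\in A_{(1,1)}$; the conclusion is precisely the claimed weighted weak $L(\log L)$-type endpoint estimate. The only non-cosmetic step is the change-of-variables computation in the second paragraph, and even that is elementary. An entirely analogous reasoning recovers Theorem \ref{thm.cpara1} by applying Theorem \ref{thm.1.1} in place of Theorem \ref{thm.1.2}, which is worth noting since both paraproduct results are parallel corollaries of the general framework built in Sections \ref{proof}--\ref{variable}.
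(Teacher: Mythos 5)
Your proposal is correct and matches the paper's own treatment: the paper likewise observes that when $\theta$ satisfies (\ref{equ.ldini-2}), the function $\omega(t)=A_0^3A_N\theta(C_Nt)$ inherits that condition, and then deduces Theorem \ref{thm.cpara2} directly from Lemma \ref{lem.mn.th5.3} together with Theorem \ref{thm.1.2} for $m=2$. Your change-of-variables verification is the only step the paper leaves implicit, and your splitting argument handles it correctly.
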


\begin{theorem} \label{thm.cpara3}
Let $\theta$ and $\phi_Q^j$ be the same as in Theorem \ref{thm.cpara1}
and $\vec{b}\in {BMO^2}$. Suppose that $p(\cdot)\in \mathscr{P}_0$ and
$p_1(\cdot), p_2(\cdot) \in \mathscr{P}$ so that
${1}/{p(\cdot)} ={1}/{p_1(\cdot)}+ {1}/{p_2(\cdot)}$. If
$p(\cdot)$, $p_1(\cdot)$ and $p_2(\cdot)$ satisfy (\ref{equ.log1})
and (\ref{equ.log2}), then, for $v_i\in {A_{p_i(\cdot)}}$, $i=1,2$,
and $v=v_1v_2$, there exists a positive constant $C$ such that
$$\big\|\Pi_{\Pi\vec{b}}(f_1,f_2)\big\|_{L_v^{p(\cdot)}(\rn)}\le
{C} \|f_1\|_{L_{v_1}^{p_1(\cdot)}(\rn)} \|f_2\|_{L_{v_2}^{p_2(\cdot)}(\rn)}.
$$
\end{theorem}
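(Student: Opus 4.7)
The plan is to derive Theorem \ref{thm.cpara3} as a direct application of Theorem \ref{thm.1.3} with $m=2$, combined with Lemma \ref{lem.mn.th5.3}. By Lemma \ref{lem.mn.th5.3}, under the hypotheses placed on $\theta$ and on the three families of molecules $\{\phi_Q^j\}_{Q \in \mathcal{D}}$, the paraproduct $\Pi$ is a bilinear Calder\'on-Zygmund operator of type $\omega$, where $\omega(t)=A_0^3 A_N \theta(C_N t)$ and $\tau=1/2$, and hence in particular a $2$-linear $\omega$-CZO in the sense of Definition \ref{def.1.1}. Its iterated commutator with $\vec{b}=(b_1,b_2) \in BMO^2$ is precisely the operator $\Pi_{\Pi\vec{b}}$ appearing in the theorem.

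The next step is to check that $\omega(t)=A_0^3 A_N \theta(C_N t)$ inherits the Dini-log condition (\ref{equ.ldini}) with $m=2$ from the hypothesis (\ref{equ.ldini-2}) imposed on $\theta$. Since $\omega$ is just a constant multiple of $\theta$ composed with a linear dilation, the change of variable $u=C_N t$ yields
$$\int_0^1 \frac{\omega(t)}{t}\Big(1+\log\frac{1}{t}\Big)^2 dt \le C \int_0^{C_N} \frac{\theta(u)}{u}\Big(1+\log\frac{C_N}{u}\Big)^2 du,$$
which, after splitting the range into $[0,1]$ and (if $C_N>1$) the tail $[1,C_N]$ where $\theta$ is bounded and the logarithmic factor is harmless, is controlled by the finite integral in (\ref{equ.ldini-2}). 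So $\omega$ satisfies (\ref{equ.ldini}) with $m=2$.

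With both preparations in hand, the hypotheses of Theorem \ref{thm.1.3} (with $m=2$, $T=\Pi$, $\vec{b}\in BMO^2$, and the given exponents and weights) are all met: the exponents $p(\cdot), p_1(\cdot), p_2(\cdot)$ satisfy (\ref{equ.log1})-(\ref{equ.log2}) and the reciprocal relation, the weights satisfy $v_j \in A_{p_j(\cdot)}$, and $v=v_1 v_2$. Applying Theorem \ref{thm.1.3} then yields
$$\big\|\Pi_{\Pi\vec{b}}(f_1,f_2)\big\|_{L_v^{p(\cdot)}(\rn)} \le C \|f_1\|_{L_{v_1}^{p_1(\cdot)}(\rn)} \|f_2\|_{L_{v_2}^{p_2(\cdot)}(\rn)},$$
which is the desired conclusion. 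There is no real obstacle at this stage: the heavy lifting has already been done in Theorem \ref{thm.1.3} and Lemma \ref{lem.mn.th5.3}, and the only nontrivial check is the routine rescaling argument for the Dini-log integral outlined above.
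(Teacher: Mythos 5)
Your proof is correct and takes essentially the same route as the paper: realize $\Pi$ as a bilinear $\omega$-CZO via Lemma \ref{lem.mn.th5.3}, check that $\omega(t)=A_0^3A_N\theta(C_Nt)$ inherits condition (\ref{equ.ldini-2}) from $\theta$, and then invoke Theorem \ref{thm.1.3} with $m=2$. The paper states the transfer of the Dini--log condition without proof, so your explicit change-of-variables verification is a welcome addition, not a deviation.
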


\subsection{Bilinear pseudo-differential operators with mild regularity}

Let $m\in\mathbb{R}$, $0\le \delta,\rho\le 1$ and $\alpha, \beta,
\gamma \in {\mathbb{Z}}_{+}^n$. A bilinear pseudo-differential
operator $T_{\sigma}$ with a bilinear symbol $\sigma(x,\xi,\eta)$, a
priori defined from ${\mathscr{S}}(\rn) \times {\mathscr{S}}(\rn)$
to ${\mathscr{S'}}(\rn)$, is given by
$$T_{\sigma}(f_1,f_2)(x)=\int_{\rn}\int_{\rn}e^{ix\cdot(\xi+\eta)}
\sigma(x,\xi,\eta) \hat{f_1}(\xi) \hat{f_2}(\eta)d{\xi}d{\eta}.
$$
We say that a symbol $\sigma(x,\xi,\eta)$ belongs to the bilinear
H\"ormander class $BS^m_{\rho,\delta}$ if
$$\big|\partial_{x}^{\alpha}\partial_{\xi}^{\beta}
\partial_{\eta}^{\gamma} \sigma(x,\xi,\eta)\big| \le
{C_{\alpha,\beta}}
(1+|\xi|+|\eta|)^{m+\delta|\alpha|-\rho(|\beta|+|\gamma|)}, ~~
x,\xi,\eta\in\rn.
$$
for all multi-indices $\alpha, \beta$ and $\gamma$ and some constant
$C_{\alpha,\beta}$.

For $\Omega, \theta: [0,\infty) \to [0,\infty)$ and $0\le \rho\le
1$, we say that a symbol $\sigma \in {BS^m_{\rho,\theta,\Omega}}$ if
$$\big|\partial_{\xi}^{\alpha}\partial_{\eta}^{\beta}
\sigma(x,\xi,\eta)\big| \le {C_{\alpha,\beta}}
(1+|\xi|+|\eta|)^{m-\rho(|\alpha|+|\beta|)}
$$
and
\begin{align*}
&\big|\partial_{\xi}^{\alpha}\partial_{\eta}^{\beta}\big(
\sigma(x+h,\xi,\eta)-\sigma(x,\xi,\eta)\big)\big|\\
&~~~ \le {C_{\alpha,\beta}} \theta(|h|)\Omega(|\xi|+|\eta|)
(1+|\xi|+|\eta|)^{m-\rho(|\alpha|+|\beta|)}
\end{align*}
for all $x,\xi,\eta\in\rn$. Obviously, ${BS^{m}_{\rho,0}} \subset
{BS^m_{\rho,\theta,\Omega}}$.

The following result was proved by Maldonado and Naibo in
\cite[Theorem 4.3]{mn}.

\begin{lemma}  \label{lem.mn.th4.3}
Let $a\in (0,1)$, $\theta$ be concave with $\theta\in {Dini(a/2)}$
and $\Omega:[0,\infty) \to [0,\infty)$ be nondecreasing such that
\begin{equation}   \label{equ.pdo1} 
\sup_{0<t<1}\theta^{1-a}(t)\Omega(1/t)<\infty.
\end{equation}
If $\sigma \in {BS^0_{1,\theta,\Omega}}$ with $|\alpha|+|\beta|\le
4n+4$, then $T_{\sigma}$ is a bilinear Calder\'on-Zygmund operator
of type $\omega$ with $\omega(t)=\theta^a(t)$ and $\tau=1/3$.
\end{lemma}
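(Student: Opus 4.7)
The plan is to realize $T_\sigma$ as an integral operator with kernel
\[K(x,y_1,y_2) = \int_{\rn}\int_{\rn} e^{i(x-y_1)\cdot\xi + i(x-y_2)\cdot\eta}\sigma(x,\xi,\eta)\,d\xi\,d\eta\]
(interpreted distributionally off the diagonal) and to verify the three kernel estimates (\ref{equ.k1})--(\ref{equ.k3}) of Definition \ref{def.1.1} with $\omega(t)=\theta^a(t)$ and $\tau=1/3$, together with one bilinear mapping bound $L^{q_1}\times L^{q_2}\to L^q$. Introducing a smooth dyadic partition of unity $\{\psi_k\}_{k\ge 0}$ in $(\xi,\eta)$-space adapted to the annuli $|\xi|+|\eta|\sim 2^k$, I would decompose $K=\sum_{k\ge 0}K_k$ accordingly. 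Repeated integration by parts in $(\xi,\eta)$, using the symbol bounds up to order $|\alpha|+|\beta|\le 4n+4$, yields
\[|K_k(x,y_1,y_2)|\le C\,2^{2nk}(1+2^k d)^{-N},\qquad d:=|x-y_1|+|x-y_2|,\]
for any $N\le 4n+4$; summing in $k$ produces the size bound (\ref{equ.k1}).

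The $y_j$-regularity (\ref{equ.k3}), say for $j=1$, I would obtain from the identity
\[K_k(x,y_1,y_2)-K_k(x,y_1',y_2) = \int\int \bigl(e^{-iy_1\cdot\xi}-e^{-iy_1'\cdot\xi}\bigr) e^{ix\cdot\xi+i(x-y_2)\cdot\eta}\sigma(x,\xi,\eta)\psi_k(\xi,\eta)\,d\xi\,d\eta\]
combined with $|e^{-iy_1\cdot\xi}-e^{-iy_1'\cdot\xi}|\le\min\{2,\,2^{k+1}|y_1-y_1'|\}$ and the same integration-by-parts machinery. Splitting the $k$-sum at $2^k\sim |y_1-y_1'|^{-1}$ yields a regularity estimate whose power-type gain is, thanks to monotonicity of $\theta^a$ and the restriction $|y_1-y_1'|\le d/3$, absorbed into $d^{-2n}\theta^a(|y_1-y_1'|/d)$; the $j=2$ case is identical.

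The heart of the argument is the $x$-regularity (\ref{equ.k2}), where both $\theta\in Dini(a/2)$ and the balancing condition (\ref{equ.pdo1}) are decisive. I would split
\[K_k(x,y_1,y_2)-K_k(x',y_1,y_2) = A_k + B_k,\]
with $A_k$ collecting the contribution of the symbol difference $\sigma(x,\xi,\eta)-\sigma(x',\xi,\eta)$ (phases frozen at $x$) and $B_k$ collecting the contribution of the phase differences $e^{i(x-y_j)\cdot\xi}-e^{i(x'-y_j)\cdot\xi}$ (symbol evaluated at $x'$). The mild-regularity hypothesis on $\sigma$ together with integration by parts gives $|A_k|\le C\,\theta(|x-x'|)\Omega(2^k)\,2^{2nk}(1+2^kd)^{-N}$, while $|e^{i(x-y_j)\cdot\xi}-e^{i(x'-y_j)\cdot\xi}|\le 2^{k+1}|x-x'|$ yields $|B_k|\le C\,2^k|x-x'|\cdot 2^{2nk}(1+2^kd)^{-N}$. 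For each $k$ one takes the best of these bounds together with the plain size estimate. Applying (\ref{equ.pdo1}) at scale $t=2^{-k}$ in the form $\Omega(2^k)\le C\theta^{a-1}(2^{-k})$ allows the combination $\theta(|x-x'|)\Omega(2^k)$ to be recast as $\theta^a(\max\{|x-x'|,2^{-k}\})$ on the relevant frequency range; splitting the $k$-sum at the threshold $2^k\sim |x-x'|^{-1}$ and summing, the restriction $|x-x'|\le\tfrac13\max\{|x-y_1|,|x-y_2|\}$ gives the envelope $C\,d^{-2n}\theta^a(|x-x'|/d)$.

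Finally, for $T_\sigma$ to qualify as a bilinear $\omega$-CZO per Definition \ref{def.1.1}, a single bilinear mapping bound $L^{q_1}\times L^{q_2}\to L^q$ is needed; this can be extracted from a standard bilinear Littlewood--Paley or Coifman--Meyer argument, which uses only the pointwise $(\xi,\eta)$-decay inherited from $BS^0_{1,\theta,\Omega}\subset BS^0_{1,0}$ and is unaffected by the $x$-modulus-of-continuity hypotheses. The main obstacle I expect is the careful two-regime summation in the $x$-regularity step: the low-frequency contribution controlled by the mild symbol regularity must be matched to the high-frequency contribution controlled by the phase difference and the size bound, in such a way that (\ref{equ.pdo1}) converts the hybrid factor $\theta(|x-x'|)\Omega(|x-x'|^{-1})$ cleanly into $\theta^a(|x-x'|/d)$; the threshold $\tau=1/3$ arises precisely from the need to keep $d$ comparable to $|x'-y_1|+|x'-y_2|$ throughout, so that the oscillatory structure of the phases is preserved under integration by parts.
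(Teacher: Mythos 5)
The paper does not actually prove this lemma: it is quoted verbatim as Theorem 4.3 of Maldonado and Naibo \cite{mn}, so there is no in-paper argument to compare yours against. Judged on its own merits, your kernel-estimate strategy (dyadic decomposition in $(\xi,\eta)$, integration by parts for the size and $y_j$-regularity bounds, and the $A_k+B_k$ split with the balancing condition (\ref{equ.pdo1}) converting $\theta(|x-x'|)\Omega(2^k)$ into a power of $\theta$) is the right idea and roughly parallels what Maldonado--Naibo do. One point you skate over in the ``heart of the argument'' is that the low-frequency part of the $A_k$-sum naturally produces the envelope $d^{-2n}\theta^a(|x-x'|)$, not $d^{-2n}\theta^a(|x-x'|/d)$ as (\ref{equ.k2}) requires; when $d\le1$ monotonicity of $\theta$ gives the needed inequality for free, but when $d>1$ it fails in general, and one must pair the surplus decay $(2^kd)^{-(N-2n)}$ with the concavity estimate $\theta(\lambda t)\ge\lambda\theta(t)$, $\lambda\in(0,1)$, to close the gap. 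That is a real detail to justify, not a routine absorption.

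The more serious flaw is in your final paragraph. You assert the inclusion $BS^0_{1,\theta,\Omega}\subset BS^0_{1,0}$ and use it to dismiss the $L^{q_1}\times L^{q_2}\to L^q$ mapping bound as ``standard Coifman--Meyer, unaffected by the $x$-modulus-of-continuity hypotheses.'' The inclusion runs the other way: the paper states explicitly that $BS^m_{\rho,0}\subset BS^m_{\rho,\theta,\Omega}$, because $BS^m_{\rho,\theta,\Omega}$ requires no $x$-derivatives of the symbol at all, only the weak modulus $\theta$. Consequently the Coifman--Meyer theorem does not apply, and establishing a single Lebesgue-space bound for $T_\sigma$ is precisely the non-trivial content of the result. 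In Maldonado--Naibo's argument this is where $\theta\in Dini(a/2)$ and (\ref{equ.pdo1}) are genuinely used (via a reduction to paraproducts of $\theta$-molecules and a $T(1)$-type argument for those), whereas in your sketch $Dini(a/2)$ does not appear to be used anywhere. As written, the boundedness step is not just under-argued but rests on a false inclusion, so the proof does not go through.
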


Let ${\vec{b}}=(b_1, b_2) \in {BMO^2}$, the iterated commutator of
bilinear pseudo-differential operator $T_{\sigma}$ with
$\vec{b}=(b_1,b_2)$ is defined by
\begin{equation*}
\begin{split}
T_{\sigma,\Pi\vec{b}}(f_1,f_2)(x) &=
[b_1,[b_2,T_{\sigma}]_2,]_1(f_1,f_2)(x)\\
&=b_1(x)b_2(x)T_{\sigma}(f_1,f_2)(x)-b_2(x)
T_{\sigma}(b_1f_1,f_2)(x) \\
  &\qquad\quad -b_1(x)T_{\sigma}(f_1,b_2f_2)(x) +
  T_{\sigma}(b_1f_1,b_2f_2)(x).
\end{split}
\end{equation*}

For the iterated commutator of bilinear pseudo-differential
operators with associated symbols in ${BS^0_{1,\theta,\Omega}}$, we
have the following results.

\begin{theorem} \label{thm.cpdo1}
Let $a\in (0,1)$, $\theta$ be concave with $\theta\in {Dini(a/2)}$
and $\theta^a(t)$ satisfying (\ref{equ.ldini-2}), and
$\Omega:[0,\infty) \to [0,\infty)$ be nondecreasing such that
(\ref{equ.pdo1}) holds. Suppose that $\sigma \in
{BS^0_{1,\theta,\Omega}}$ with $|\alpha|+|\beta|\le 4n+4$. If
$\vec{b}\in {BMO^2}$ and ${\vec{v}}_w\in {A_{\vec{P}}}$ with
$1<p_1,p_2<\infty$ and $1/p=1/p_1+ 1/p_2$, then there exists a
constant $C>0$ such that
$$\big\|T_{\sigma,\Pi\vec{b}}(f_1,f_2)\big\|_{L^p(\nu_{\vec{w}})}\le
{C}\|\vec{b}\|_{BMO^2}\|f_1\|_{L^{p_1}(w_1)}\|f_2\|_{L^{p_2}(w_2)}.
$$
\end{theorem}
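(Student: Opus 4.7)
The plan is to derive Theorem \ref{thm.cpdo1} as a direct consequence of the abstract multilinear theory developed in Section \ref{introduction} (specifically Theorem \ref{thm.1.1}) together with the structural result Lemma \ref{lem.mn.th4.3}. The argument is essentially a verification step: confirm that the bilinear pseudo-differential operator $T_\sigma$ falls into the framework of bilinear $\omega$-CZOs with a modulus of continuity $\omega$ that satisfies the log-Dini regularity (\ref{equ.ldini}), and then invoke the abstract theorem.

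First, I would apply Lemma \ref{lem.mn.th4.3} with the given $\theta$, $\Omega$, $a$, and $\sigma\in BS^0_{1,\theta,\Omega}$. The hypotheses of that lemma are exactly the ones assumed here: $\theta$ is concave and in $Dini(a/2)$, $\Omega$ is nondecreasing with (\ref{equ.pdo1}), and enough derivatives of $\sigma$ satisfy the symbol estimates. Lemma \ref{lem.mn.th4.3} then produces a bilinear Calder\'on-Zygmund kernel representation of $T_\sigma$ of type $\omega$ with $\omega(t)=\theta^a(t)$ and parameter $\tau=1/3$. As noted at the beginning of Section \ref{proof}, the particular value of $\tau\in(0,1)$ is immaterial for the subsequent theory, so this causes no issue.

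Next, I would check that $\omega(t)=\theta^a(t)$ fulfills the log-Dini condition (\ref{equ.ldini}) in the case $m=2$, which is precisely condition (\ref{equ.ldini-2}). But this is exactly what is assumed in the hypothesis of Theorem \ref{thm.cpdo1}, so there is nothing to verify beyond reading off the hypothesis. Combined with the standard $L^2\times L^2 \to L^1$ boundedness of $T_\sigma$ (which places $T_\sigma$ in the class of bilinear $\omega$-CZOs in the sense of Definition \ref{def.1.1}), we have that $T_\sigma$ is a bilinear $\omega$-CZO with $\omega$ satisfying (\ref{equ.ldini}).

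Finally, I would apply Theorem \ref{thm.1.1} with $m=2$ to $T=T_\sigma$, $\vec{b}=(b_1,b_2)\in BMO^2$, and $\vec{w}\in A_{\vec{P}}$ with $1<p_1,p_2<\infty$ and $1/p=1/p_1+1/p_2$. This immediately yields
\[
\big\|T_{\sigma,\Pi\vec{b}}(f_1,f_2)\big\|_{L^p(\nu_{\vec{w}})}\le C\|b_1\|_{BMO}\|b_2\|_{BMO}\|f_1\|_{L^{p_1}(w_1)}\|f_2\|_{L^{p_2}(w_2)},
\]
and since $\|\vec{b}\|_{BMO^2}$ is comparable to $\|b_1\|_{BMO}\|b_2\|_{BMO}$ (up to the convention used in the statement), we obtain the desired inequality. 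There is no genuine obstacle here, the ``hard'' analytic work having already been absorbed into Lemma \ref{lem.mn.th4.3} and Theorem \ref{thm.1.1}; the only thing to be careful about is matching notation ($\omega=\theta^a$, and (\ref{equ.ldini-2}) is just (\ref{equ.ldini}) with $m=2$).
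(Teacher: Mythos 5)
Your proposal is correct and follows essentially the same route as the paper: invoke Lemma \ref{lem.mn.th4.3} to recognize $T_\sigma$ as a bilinear $\omega$-CZO with $\omega=\theta^a$, note that the hypothesis that $\theta^a$ satisfies (\ref{equ.ldini-2}) is precisely (\ref{equ.ldini}) in the bilinear case, and then apply Theorem \ref{thm.1.1} with $m=2$. The remark about $\tau=1/3$ versus $1/2$ being immaterial is the right thing to flag, and the comment on $L^2\times L^2\to L^1$ boundedness is harmless but redundant, since the conclusion of Lemma \ref{lem.mn.th4.3} already asserts $T_\sigma$ is a bilinear Calder\'on--Zygmund \emph{operator} of type $\omega$ in the sense of Definition \ref{def.1.1}, which includes the required boundedness.
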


\begin{theorem}   \label{thm.cpdo2} 
Let $a, \theta$, $\Omega$ and $\sigma$ be the same as in Theorem
\ref{thm.cpdo1}. If $\vec{b}\in {BMO^2}$ and $\vec{w} \in
A_{(1,1)}$, then there is a constant $C>0$, depending on
$\|\vec{b}\|_{BMO^2}$, such that for any $\lambda>0$,
$$\nu_{\vec{w}}\left(\left\{ x\in \rn: \big|T_{\sigma,\Pi\vec{b}}
(f_1,f_2)(x) \big|>\lambda^2 \right\}\right) \le {C} \prod_{j=1}^2
\left( \int_{\rn}\Phi^{(2)}\bigg(\frac{|f_j(x)|}{\lambda}\bigg)
w_j(x)dx\right)^{1/2}.
$$
\end{theorem}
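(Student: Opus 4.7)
The plan is to reduce Theorem \ref{thm.cpdo2} directly to Theorem \ref{thm.1.2} via the structural result Lemma \ref{lem.mn.th4.3}. The argument has essentially two steps with no substantive obstacle beyond a verification of hypotheses.

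First, I invoke Lemma \ref{lem.mn.th4.3}. Under the standing assumptions on $a$, $\theta$ and $\Omega$ (namely $\theta$ concave, $\theta \in Dini(a/2)$, $\Omega$ nondecreasing satisfying (\ref{equ.pdo1})), together with $\sigma \in BS^0_{1,\theta,\Omega}$ for $|\alpha|+|\beta| \le 4n+4$, this lemma guarantees that $T_{\sigma}$ is a bilinear Calder\'on-Zygmund operator of type $\omega$ with $\omega(t) = \theta^a(t)$ and $\tau = 1/3$. As noted at the start of Section \ref{proof}, the specific value of $\tau$ is immaterial, so the full machinery developed for $m$-linear $\omega$-CZOs applies.

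Next, I check that the regularity function $\omega(t) = \theta^a(t)$ meets the hypothesis (\ref{equ.ldini}) of Theorem \ref{thm.1.2} in the case $m=2$. This is exactly the assumption imposed on $\theta^a$ in Theorem \ref{thm.cpdo1}, which is also in force here: by hypothesis $\theta^a$ satisfies (\ref{equ.ldini-2}), which is (\ref{equ.ldini}) with $m=2$. Consequently, $T_\sigma$ is a bilinear $\omega$-CZO whose kernel regularity function $\omega$ satisfies (\ref{equ.ldini}) for $m=2$.

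Finally, with $\vec{b} \in BMO^2$ and $\vec{w} \in A_{(1,1)}$, I apply Theorem \ref{thm.1.2} directly to the bilinear operator $T_\sigma$. The iterated commutator $(T_\sigma)_{\Pi\vec{b}}$ as defined in the theorem coincides with $T_{\sigma,\Pi\vec{b}}$ in our current notation, so the conclusion
$$\nu_{\vec{w}}\bigl(\bigl\{ x\in \rn: \bigl|T_{\sigma,\Pi\vec{b}}(f_1,f_2)(x) \bigr|>\lambda^2 \bigr\}\bigr) \le {C} \prod_{j=1}^2 \left( \int_{\rn}\Phi^{(2)}\bigl(|f_j(x)|/\lambda\bigr) w_j(x)\,dx\right)^{1/2}$$
follows immediately, with the constant $C$ depending on $\|\vec{b}\|_{BMO^2}$ as stated. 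There is no genuinely hard step here; the entire work is contained in Theorems \ref{thm.1.1}--\ref{thm.1.2} and Lemma \ref{lem.mn.th4.3}, and the role of Theorem \ref{thm.cpdo2} is to package those results for the pseudo-differential setting.
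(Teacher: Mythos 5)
Your proposal is correct and coincides with the paper's own argument: the paper also proves Theorem \ref{thm.cpdo2} by invoking Lemma \ref{lem.mn.th4.3} to realize $T_{\sigma}$ as a bilinear $\omega$-CZO with $\omega(t)=\theta^a(t)$, observing that the hypothesis that $\theta^a$ satisfies (\ref{equ.ldini-2}) is precisely condition (\ref{equ.ldini}) for $m=2$, and then applying Theorem \ref{thm.1.2}. Your explicit remark that the value $\tau=1/3$ is immaterial is a minor but welcome clarification that the paper leaves implicit by its earlier comment at the start of Section \ref{proof}.
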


\begin{theorem} \label{thm.cpdo3}
Let $a, \theta$, $\Omega$ and $\sigma$ be the same as in Theorem
\ref{thm.cpdo1} and $\vec{b}\in {BMO^2}$.
Suppose that $p(\cdot)\in \mathscr{P}_0$ and
$p_1(\cdot), p_2(\cdot) \in \mathscr{P}$ so that
${1}/{p(\cdot)} ={1}/{p_1(\cdot)}+ {1}/{p_2(\cdot)}$.
If $p(\cdot)$, $p_1(\cdot)$ and $p_2(\cdot)$ satisfy (\ref{equ.log1})
and (\ref{equ.log2}), then, for $v_i\in {A_{p_i(\cdot)}}$, $i=1,2$,
and $v=v_1v_2$, there exists a constant $C>0$ such that
$$\big\|T_{\sigma,\Pi\vec{b}}(f_1,f_2)\big\|_{L_v^{p(\cdot)}(\rn)} \le
{C} \|f_1\|_{L_{v_1}^{p_1(\cdot)}(\rn)} \|f_2\|_{L_{v_2}^{p_2(\cdot)}(\rn)}.
$$
\end{theorem}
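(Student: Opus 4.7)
The plan is to obtain Theorem \ref{thm.cpdo3} as an immediate corollary of Theorem \ref{thm.1.3} via the identification of $T_{\sigma}$ as a bilinear $\omega$-CZO, exactly as in the proofs of Theorems \ref{thm.cpdo1} and \ref{thm.cpdo2}. In particular, essentially no new machinery is needed; the whole point is that Theorem \ref{thm.1.3} has been stated for general $m$-linear $\omega$-CZOs whose kernel regularity function $\omega$ satisfies (\ref{equ.ldini}), and the hypotheses of Theorem \ref{thm.cpdo1} are tailored to guarantee exactly this for the operator $T_{\sigma}$.

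First, I would invoke Lemma \ref{lem.mn.th4.3}: under the assumptions that $a\in(0,1)$, $\theta$ is concave with $\theta\in Dini(a/2)$, $\Omega$ is nondecreasing and satisfies (\ref{equ.pdo1}), and $\sigma\in BS^0_{1,\theta,\Omega}$ with $|\alpha|+|\beta|\le 4n+4$, the operator $T_{\sigma}$ is a bilinear Calder\'on--Zygmund operator of type $\omega$ with $\omega(t)=\theta^a(t)$ and $\tau=1/3$. Because the specific value of $\tau\in(0,1)$ is immaterial (as noted at the beginning of Section \ref{proof}), this places $T_{\sigma}$ in the framework of Definition \ref{def.1.1} with $m=2$.

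Next, I would verify the kernel regularity hypothesis of Theorem \ref{thm.1.3}. The standing assumption of Theorem \ref{thm.cpdo1}, which is inherited here, is precisely that $\omega(t)=\theta^a(t)$ satisfies (\ref{equ.ldini-2}), that is,
\[
\int_0^1\frac{\omega(t)}{t}\Bigl(1+\log\tfrac1t\Bigr)^2 dt<\infty,
\]
which is (\ref{equ.ldini}) for $m=2$. Hence $T_{\sigma}$ is a bilinear $\omega$-CZO meeting all kernel-side hypotheses required by Theorem \ref{thm.1.3}.

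Finally, since $\vec{b}\in BMO^2$, $p(\cdot)\in\mathscr{P}_0$, $p_1(\cdot),p_2(\cdot)\in\mathscr{P}$ satisfy $1/p(\cdot)=1/p_1(\cdot)+1/p_2(\cdot)$ together with the log-H\"older conditions (\ref{equ.log1})--(\ref{equ.log2}), and the weights satisfy $v_i\in A_{p_i(\cdot)}$ with $v=v_1v_2$, applying Theorem \ref{thm.1.3} with $m=2$ to the operator $T_{\sigma}$ yields
\[
\big\|T_{\sigma,\Pi\vec{b}}(f_1,f_2)\big\|_{L_v^{p(\cdot)}(\rn)}\le C\|f_1\|_{L_{v_1}^{p_1(\cdot)}(\rn)}\|f_2\|_{L_{v_2}^{p_2(\cdot)}(\rn)},
\]
which is the desired inequality. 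There is no genuine obstacle here: the entire argument is a verification that the general hypotheses of Theorem \ref{thm.1.3} are met by the specific operator $T_{\sigma}$, with the ``hard'' analytic work already packaged into Lemma \ref{lem.mn.th4.3} (kernel regularity of $\omega$-type for $T_{\sigma}$) and Theorem \ref{thm.1.3} (variable exponent weighted bounds for iterated commutators of $\omega$-CZOs).
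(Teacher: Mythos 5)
Your proposal is correct and follows essentially the same route as the paper: invoke Lemma \ref{lem.mn.th4.3} to realize $T_{\sigma}$ as a bilinear $\omega$-CZO with $\omega(t)=\theta^{a}(t)$, observe that the standing hypothesis that $\theta^a$ satisfies (\ref{equ.ldini-2}) is exactly condition (\ref{equ.ldini}) with $m=2$, and then apply Theorem \ref{thm.1.3}. The paper's own proof, stated in one sentence at the end of Section \ref{application}, is precisely this deduction; your extra remark that the specific value of $\tau$ in Lemma \ref{lem.mn.th4.3} is immaterial is a correct and harmless clarification already noted at the start of Section \ref{proof}.
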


Since $\theta\in {Dini(a/2)}$ and $\theta^a(t)$ satisfies
(\ref{equ.ldini-2}) implies $\omega(t)=\theta^a(t) \in
{Dini(1/2)}\subset{Dini(1)}$ and $\omega$ satisfying
(\ref{equ.ldini-2}), then Lemma \ref{lem.mn.th4.3} together with
Theorems \ref{thm.1.1} -- \ref{thm.1.3} gives Theorems
\ref{thm.cpdo1} -- \ref{thm.cpdo3}, respectively.



\end{document}